\documentclass[11pt]{article}
\usepackage[margin=2.5cm]{geometry}
\usepackage{graphicx} 
\usepackage{todonotes}
\usepackage{xcolor}
\usepackage{tikz}
\usepackage{amsfonts,amsmath,amssymb,amsthm,mathtools,mathrsfs,dsfont,bm,mathabx}

\usepackage{chngcntr}
\usepackage{enumitem}
\usepackage{microtype}
\usepackage{color}
\usepackage{float}
\usepackage{array}
\usepackage{multirow}
\usepackage{hyperref}
\usepackage{caption}
\usepackage{booktabs}
\usepackage{subcaption}
\usepackage{cleveref}
\usepackage{algorithm}
\usepackage{algpseudocode}

\crefname{prop}{proposition}{propositions}
\Crefname{prop}{Proposition}{Propositions}

\usetikzlibrary{calc,fadings,positioning,decorations.pathmorphing,intersections,patterns,decorations.markings,svg.path,graphs,matrix,arrows.meta,arrows, mindmap, graphs, graphs.standard,shapes.geometric,calligraphy,decorations.pathreplacing}

\DeclareMathOperator{\dom}{DOM}
\DeclareMathOperator{\tdom}{TDOM}

\DeclareMathOperator{\forb}{Forb}

\newcommand{\cF}{\mathcal{F}}

\newcommand{\wN}{\widetilde{N}}

\newcommand{\cS}{\mathcal{S}}

\newcommand{\cE}{\mathcal{E}}

\newcommand{\cP}{\mathcal{P}}
\newcommand{\cQ}{\mathcal{Q}}
\newcommand{\cH}{\mathcal{H}}
\newcommand{\sG}{\mathscr{G}}
\newcommand{\cstN}{\wN_{p,K}}

\newcommand{\UF}{\bigcup \cF}

\newcommand{\card}[1]{\left\lvert #1\right\rvert}
\newcommand{\pr}[1]{\mathbb{P}\left[#1\right]}

\newcommand{\sst}[2]{\left\{#1 : #2\right\}}
\newcommand{\pth}[1]{\left(#1\right)}
\newcommand{\floor}[1]{\left\lfloor#1\right\rfloor}
\newcommand{\ceil}[1]{\left\lceil#1\right\rceil}

\newcommand{\bsig}{\bm{\sigma}}

\newtheorem{thm}{Theorem}
\newtheorem{prop}[thm]{Proposition}
\newtheorem{cor}[thm]{Corollary}

\newtheorem{conj}[thm]{Conjecture}
\newtheorem{lemma}[thm]{Lemma}
\newtheorem{claim}{Claim}[thm]
\newtheorem{problem}{Problem}
\newtheorem{obs}{Observation}
\theoremstyle{definition}

\newtheoremstyle{case}{}{}{}{}{}{:}{ }{}
\theoremstyle{case}

\counterwithin*{case}{claim}
\theoremstyle{remark}
\newtheorem{rk}{Remark}

\newenvironment{proofclaim}[1][{\it Proof of claim. \hspace{0.066cm}}]%
	{\noindent {}{#1}{}}{ \strut\hfill $\lozenge$\vspace{2ex}}

\title{A Domatic Analogue of $\chi$-Bounded Graph Classes and the Gy\' arf\' as--Sumner Conjecture}
\author{Quentin Chuet, Selma Djelloul, Hoang La, François Pirot, Hossein Zaredehabadi}

\begin{document}

\maketitle

\begin{abstract}
Given a graph $G$, a dominating set is a subset $X\subseteq V(G)$ such that $N[X]=V(G)$. The \emph{domatic number} of $G$, denoted $\dom(G)$, is the maximum size of a partition of $V(G)$ into dominating sets. 
In analogy with the lower bound of the chromatic number by the clique number, the domatic number satisfies the upper bound
$\dom(G)\le \delta(G)+1$ where $\delta(G)$ is the minimum degree of $G$.
Therefore,
as an analogue of the notion of $\chi$-bounded graph classes, we say that a class of graphs $\sG$ is \emph{DOM-bounded} if there exists a positive unbounded function $f_\sG$ such that for every $G\in \sG$, 
we have $\dom(G) \ge f_\sG(\delta(G))$.

We propose the following conjecture for graphs forbidding a fixed induced subgraph, analogous to the Gy\'arf\'as--Sumner Conjecture for $\chi$-bounded graph classes: for every connected graph $H$, the class of $H$-free graphs is DOM-bounded if and only if $H$ is a tree of diameter at most $3$.
We reduce the case of disconnected graphs to the connected setting and show that the conditions on $H$ are necessary.

We show that star-free graphs of minimum degree at least $\delta$ have domatic number $\Omega(\delta /\log \delta)$, which is best possible up to a constant factor. 
We also identify a subclass of star-free graphs for which the domatic number is linear in $\delta$: line graphs of bounded rank hypergraphs. 

In support of our conjecture in the case of double stars, we prove that $P_4$-free graphs (i.e. cographs) of minimum degree $\delta$ have domatic number at least $1 + \frac{\delta}{2}$, which is best possible. 
\end{abstract}

\section{Introduction}
Let $G$ be a graph and $k$ an integer. Given a $k$-colouring $\phi\colon V(G) \to [k]$, we say that $\phi$ is \emph{dominating} if $\phi(N[v])=[k]$ for every vertex $v\in V(G)$. Equivalently, the colour classes of $\phi$ form a \emph{domatic partition} of $G$, that is, a partition of $V(G)$ into dominating sets (recall that a subset $X\subseteq V(G)$ is \emph{dominating} if $N[X]=V(G)$). The \emph{domatic number $\dom(G)$ of $G$} is the maximum $k$ such that a dominating $k$-colouring of $G$ exists.

A standard upper bound is $\dom(G)\le \delta(G)+1$, where $\delta(G)$ denotes the minimum degree of $G$. If equality holds, then $G$ is said to be \emph{domatically full}. The largest known class of domatically full graphs is that of strongly chordal graphs \cite{Far84}.
The equality between the chromatic number and the clique in perfect graphs, and more generally, their functional equivalence in $\chi$-bounded classes naturally parallels the relationship between the domatic number and the minimum degree in domatically full graphs, and more broadly, in DOM-bounded classes which we define as follows.
Given a class of graphs $\sG$, we say that $\sG$ is \emph{DOM-bounded} if there exists a positive unbounded function $f_\sG$ such that
\[
\dom(G) \ge f_\sG(\delta(G))
\]
for every graph $G\in \sG$.
We say that $f_\sG$ is a \emph{DOM-binding function of $\sG$}, and if there is no graph $G\in \sG$ such that $\dom(G) > f_\sG(\delta(G))$, we further say that $f_\sG$ is \emph{optimal}. 
This condition is trivially satisfied when $\sG$ has bounded minimum degree; consequently, we restrict our attention to classes with arbitrarily large minimum degree.
In particular, non-trivial minor-closed classes of graphs are not relevant with respect to DOM-boundedness, as every $H$-minor-free graph (for a fixed $t$-vertex graph $H$) is $O(t\sqrt{\log t})$-degenerate \cite{kostochka1982minimum}. 

Zelinka \cite{zelinka1983domatic} proved that the class of all graphs is not DOM-bounded by constructing graphs with domatic number $2$ and arbitrarily large minimum degree (see \Cref{sec:constructions}). By modifying Zelinka's construction (which is bipartite), we obtain split graphs and high-girth graphs with the same property.
Hence, bipartite graphs, split graphs, and graphs of high girth are not DOM-bounded. The objective of this paper is to identify structural constraints that guarantee DOM-boundedness.


We turn our attention to the classes of graphs with forbidden sub-structures. 

\paragraph{Forbidden subgraphs}
Consider the class $\sG$ of graphs that do not contain a fixed graph $H$ as a subgraph. If $H$ contains a cycle of length $\ell$, then $\sG$ contains the class of graphs of girth greater than $\ell$, which is not DOM-bounded. Otherwise, $H$ is a forest, and every graph in $\sG$ is $(|V(H)|-2)$-degenerate, and therefore has bounded minimum degree. Thus, forbidding a subgraph does not provide a meaningful structural constraint with respect to DOM-boundedness.

\paragraph{Forbidden induced subgraphs}
A graph $G$ is \emph{$H$-free} if it does not contain $H$ as an induced subgraph; we denote by $\forb(H)$ the class of $H$-free graphs. More generally, given a family $\cF$ of graphs, we say that $G$ is $\cF$-free if it is $H$-free for every $H\in \cF$, and we denote by $\forb(\cF)$ the corresponding class. Our goal is to characterise the graphs $H$ for which $\forb(H)$ is DOM-bounded.

The analogous problem for $\chi$-boundedness is the Gy\'arf\'as--Sumner Conjecture.

\begin{conj}[Gy\'arf\'as, 1975~\cite{Gya75}; Sumner, 1981~\cite{Sum81}]
\label{conj:gyarfas-sumner}
    For every graph $H$, $\forb(H)$ is $\chi$-bounded if and only if $H$ is a forest.
\end{conj}

As in the case of forbidding $H$ as a (non-induced) subgraph, if $H$ contains a cycle of length $\ell$, then $\forb(H)$ contains the class of graphs of girth greater than $\ell$, which is not DOM-bounded.
If $H$ is connected and has diameter at least $4$, then it contains an induced copy of $P_5$.
Hence, $\forb(H)$ contains the class of split graphs (which are $P_5$-free) and is not DOM-bounded.
If $H$ has at least two connected components of size at least $2$, then $\forb(H)$ contains $\forb(2K_2)$, which in turn contains the class of split graphs. Hence, $\forb(H)$ is not DOM-bounded. 
Consequently, when $H$ is disconnected, $\forb(H)$ can be DOM-bounded only if there exist a connected graph $H_0$ and a positive integer $k$ such that $H$ is the disjoint union of $H_0$ and $k$ isolated vertices, and $\forb(H_0)$ is DOM-bounded (since $\forb(H_0) \subseteq \forb(H)$). We prove that the converse also holds.

\begin{thm} \label{thm:disconnected}
    Let $H$ be a disconnected graph. Then $\forb(H)$ is DOM-bounded if and only if $H$ has at most one connected component $H_0$ of order at least $2$, and $\forb(H_0)$ is DOM-bounded.
\end{thm}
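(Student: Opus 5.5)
The direction ``only if'' is already established by the discussion preceding the statement: two non-trivial components force $\forb(2K_2)\subseteq\forb(H)$, which contains split graphs, and $\forb(H_0)\subseteq \forb(H)$. So it remains to show the converse. Write $H=H_0+kK_1$ with $H_0$ connected, and let $f=f_{\forb(H_0)}$, which we may assume non-decreasing. Set $h=|V(H_0)|$. Let $G$ be $H$-free with $\delta(G)=\delta$ large. The plan is to find an unbounded function $g$ with $\dom(G)\ge g(\delta)$, aiming for something like $g(\delta)=\min\{f(\delta/2),\, c\sqrt{\delta}\}$ with $c$ depending on $h,k$.

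\textbf{Key structural fact.} Let $S$ induce a copy of $H_0$ in $G$ and put $R=V(G)\setminus N[S]$. Then $\alpha(G[R])<k$, since otherwise $S$ together with an independent $k$-set of $R$ induces $H$. Consequently $S\cup I$, with $I$ a maximal independent set of $G[R]$, is a dominating set of $G$ of size less than $h+k$. So every induced copy of $H_0$ yields a \emph{small} dominating set.

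\textbf{Greedy extraction.} Fix a target $t$. Iteratively build pairwise disjoint dominating sets $D_1,D_2,\dots$ of $G$ while keeping each one small. Let $X_{i-1}=D_1\cup\dots\cup D_{i-1}$. If $G-X_{i-1}$ contains an induced copy $S_i$ of $H_0$, note that it is also induced in $G$. Take a maximal independent set $I_i$ of $G[R_i\setminus X_{i-1}]$; it has size less than $k$. Then, for each vertex of $X_{i-1}\cap R_i$ not yet dominated, add one of its neighbours outside $X_{i-1}\cup S_i\cup I_i$. Such a neighbour exists because the degree is at least $\delta>|X_{i-1}|+h+k$. The resulting $D_i$ dominates $G$, is disjoint from $X_{i-1}$, and satisfies $|D_i|\le h+k+|X_{i-1}|$, so $|X_t|=O_{h,k}(2^t)$ (or polynomial if one is more careful; all we need is that $|X_t|\le\delta/2$ when $t$ is a slowly growing function of $\delta$). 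If this succeeds $t$ times, we put all leftover vertices into $D_1$ and obtain $\dom(G)\ge t$.

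\textbf{Failure case, the main obstacle.} Suppose the process stops at step $i\le t$ because $G'=G-X$ (with $X=X_{i-1}$ small) is $H_0$-free. Then $\delta(G')\ge\delta-|X|\ge\delta/2$, so $G'$ has a domatic partition $P_1,\dots,P_m$ with $m\ge f(\delta/2)$. The difficulty is that these classes dominate $G'$ but need not dominate the vertices of $X$. My first attempt is a reservation trick, done before applying $f$: for each $x\in X$, set aside $t$ private neighbours $y_{x,1},\dots,y_{x,t}$ outside $X$, which is possible since $|X|t\ll\delta$. Then apply $f$ to $G''=G-X-Y$, and insert $y_{x,j}$ into class $P_j$ for $j\le t\le m$, so that every class sees every $x$. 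The reserved vertices $Y$ themselves must also be dominated by each class. Each $y\in Y$ has at least $\delta-|X|-|Y|$ neighbours in $G''$, but these could concentrate in few classes. To handle this I would try to iterate the reservation a bounded number of times, or to merge classes of $G''$ into $t$ groups chosen so that each of the boundedly many vertices of $X\cup Y$ sees every group. The latter works whenever every such vertex has neighbours in at least $t$ distinct classes. A vertex whose neighbourhood meets fewer classes needs a separate argument, possibly by exploiting the freedom in choosing the partition of $G''$, or by moving single vertices between large classes. Making this repair rigorous is where I expect the real work to lie. Everything else reduces to taking $t$ a slowly growing function of $\delta$ so that all removed sets stay below $\delta/2$.
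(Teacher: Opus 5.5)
Your ``only if'' direction and your greedy extraction are sound and essentially coincide with the paper's procedure. The paper likewise takes an induced copy of $H_0$, adds a maximal independent set of what it does not dominate (fewer than $k$ vertices), and adds one fresh neighbour for each previously used vertex, so sizes at most double.

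The gap is the failure case, which you leave open, and the reservation route does not close it. Removing $Y$ before partitioning $G''$ means nothing forces the classes to dominate $Y$. The proposed merging repair is also not valid as stated. If three vertices see the class sets $\{1,2\}$, $\{2,3\}$, $\{1,3\}$, no merging into $2$ groups serves all of them, so ``neighbours in at least $t$ distinct classes'' is not sufficient. Vertices whose neighbourhood meets few classes are not handled at all.

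The missing idea is to take the private neighbours of $X$ from \emph{inside} the domatic partition of $G'=G-X$, and to sacrifice the classes they come from.
\begin{enumerate}
\item Take a domatic partition of $G'$ and merge classes (a union of dominating sets still dominates) to obtain $P_1,\dots,P_m$ with $m\le\sqrt{\delta/2}$.
\item Process the vertices $x\in X$ one by one. As long as $|X|\le m/2$ and $\delta$ is large, $x$ still has at least $\delta/2$ neighbours in $G'$ not used for earlier vertices. By pigeonhole, a single class contains a set $S(x)$ of $\lfloor\delta/(2m)\rfloor\ge m$ of them.
\item Discard the at most $|X|$ classes that contain some $S(x)$. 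Give one vertex of each $S(x)$ to each of the remaining at least $m-|X|$ classes, and distribute the leftovers arbitrarily.
\end{enumerate}
Each surviving class still contains its original class, which dominates $V(G')$. So the moved vertices, which lie in $G'$, need no further care; this is exactly what fails for your $Y$. Each surviving class now also contains a neighbour of every $x\in X$, hence dominates $G$. This is the paper's Lemma~\ref{lem:domatic-partition-excluded-set}.

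Because this costs $|X|$ classes, $t$ must be chosen in terms of $f$ as well as $\delta$. You need $|X|=O(2^t)$ to be small compared with $\min\{f(\delta/2),\sqrt{\delta/2}\}$, not merely below $\delta/2$. This is why the paper's bound is a logarithm of that minimum.
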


The remaining graphs to consider are trees of diameter at most $3$. 
We conjecture that all such graphs yield DOM-bounded classes.

\begin{conj}
\label{conj:main}
    For every connected graph $H$, $\forb(H)$ is DOM-bounded if and only if $H$ is a tree of diameter at most $3$. 
\end{conj}

A tree of diameter at most $3$ is either a star (diameter $2$) or a double star (diameter $3$). We partially resolve \Cref{conj:main} by establishing the result when $H$ is a star.

\begin{thm}
    \label{thm:main}
    Let $k$ be an integer such that $k\geq 2$.
    If $G$ is a $K_{1,k+1}$-free graph of minimum degree $\delta \ge 4^{1+k^2}$, then
    \[
    \dom(G) \ge \floor{\frac{\delta}{3\log \delta}}.
    \]
\end{thm}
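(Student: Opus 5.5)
We plan to take a uniformly random colouring $\phi\colon V(G)\to[k']$ with $k'=\floor{\delta/(3\log\delta)}$ and to show with the Lov\'asz Local Lemma that, with positive probability, every closed neighbourhood sees all $k'$ colours. For a vertex $v$ let $A_v$ be the bad event that $\phi(N[v])\ne[k']$. A union bound over colours gives
\[
\pr{A_v}\le k'\pth{1-\frac1{k'}}^{\delta+1}\le k' e^{-\delta/k'}\le k'\delta^{-3}\le \delta^{-2},
\]
because $\delta/k'\ge 3\log\delta$. The difficulty is that $A_v$ depends on the colours of every vertex of $N[v]$, and the degrees in $G$ may be far larger than $\delta$. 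A high-degree vertex then gives too many dependencies. To avoid this, we first pass to a sparse dominating structure: for each $v$ we choose a set $S_v\subseteq N[v]$ of exactly $\delta+1$ vertices (say $v$ together with $\delta$ of its neighbours). We then define $A_v$ as the event that $\phi(S_v)\ne[k']$. This only strengthens the requirement, so the bound on $\pr{A_v}$ above still holds.

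The dependency count is the main step, and this is where $K_{1,k+1}$-freeness is used. The event $A_v$ depends on $A_u$ only if $S_u\cap S_v\neq\emptyset$, so we must bound, for each vertex $w$, the number of $u$ with $w\in S_u$. In general this is unbounded. The plan is to choose the sets $S_u$ so that every vertex has small load. In a $K_{1,k+1}$-free graph, any neighbourhood $N(w)$ has independence number at most $k$. We use this to show that if $\deg(w)$ is huge, then $N(w)$ is covered by at most $k$ cliques' worth of structure: a greedy maximal independent set $I\subseteq N(w)$ has $\card{I}\le k$ and dominates $N(w)$. Hence neighbours of high-degree vertices have high degree themselves, roughly $\deg(x)\ge(\deg(w)-k)/k$ for some $x\in I$, and they have many alternative choices for their sets $S_u$.

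Concretely, we plan to choose the $S_u$ by a load-balancing (orientation/matching) argument. Each $u$ must pick $\delta$ neighbours, and we want every vertex $w$ to be picked at most $L=\delta^{c}$ times, with $c$ depending only on $k$, or even $L=O(k\delta)$. By Hall's theorem applied to the bipartite incidence structure, this reduces to showing that for every set $U$, the neighbourhood $N(U)$ satisfies $\card{N(U)}\ge \delta\card{U}/L$. This expansion-type inequality is the technical heart, and we would derive it from $K_{1,k+1}$-freeness: a vertex $w$ adjacent to many vertices of $U$ forces, among those, a clique of size at least $\card{N(w)\cap U}/k$ by Ramsey-type reasoning (independence number at most $k$), so members of $U$ see each other and $N(U)$ contains a large part of $U$'s neighbourhoods. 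We expect getting a clean $L$ here to be the main obstacle. The hypothesis $\delta\ge 4^{1+k^2}$ suggests the authors use a Ramsey bound $R(k+1,t)\le 4^{\dots}$, so we would use $R(k+1,\cdot)$ estimates to pass from large common neighbourhoods to large cliques.

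Once the load is at most $L$, each $S_v$ meets the sets $S_u$ of at most $(\delta+1)L$ vertices $u$. The symmetric Local Lemma then requires $e\,\delta^{-2}\bigl((\delta+1)L+1\bigr)\le 1$. This holds for $L\le \delta/4$, and by allowing a larger constant in the exponent (for example $\pr{A_v}\le\delta^{-3+o(1)}$ before trimming) we leave slack for $L$ up to about $\delta^{2}/10$. A positive probability of avoiding all the events $A_v$ yields a dominating $k'$-colouring, which proves the theorem.
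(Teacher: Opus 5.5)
\textbf{There is a genuine gap, at exactly the step you call the heart of the argument.} You never prove that each $u$ can choose a set $S_u$, consisting of $u$ and $\delta$ distinct neighbours, so that every vertex lies in at most $L$ of these sets. The route you sketch for this would not work as written, for three reasons.
\begin{itemize}
\item Independence number at most $k$ does not force a clique of size $|N(w)\cap U|/k$. Ramsey's theorem only gives a clique of size roughly $n^{1/k}$. For $k=2$, complements of triangle-free Ramsey graphs have independence number $2$ and clique number $O(\sqrt{n\log n})$. So your expansion inequality does not follow.
\item Since the chosen neighbours must be distinct, the correct feasibility criterion (by max-flow/min-cut) is $\sum_{w}\min\{\deg_U(w),L\}\ge\delta|U|$ for every $U$. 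The condition $|N(U)|\ge\delta|U|/L$ is necessary but not sufficient.
\item Your slack estimate is off. In fact $\pr{A_v}\approx k'\delta^{-3}\approx\delta^{-2}/(3\log\delta)$, not $\delta^{-3+o(1)}$. Splitting $A_v$ into per-colour events divides the probability by $k'$ but multiplies the dependency degree by $k'$, so nothing is gained. The symmetric LLL therefore only tolerates $L\lesssim\frac{3}{e}\,\delta\log\delta$. A load of $\delta^{c}$ with $c>1$ is useless, and even $L=\Theta(\delta)$ needs an explicit constant to compare with $\log\delta$.
\end{itemize}

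\textbf{The missing idea} is the decomposition proved inside \Cref{cor:constrained-star-free}. A graph with independence number at most $k$ splits into parts $X_1,\dots,X_k$ such that every vertex of $X_i$ has degree at least $|X_i|/k-1$ in that graph. Apply it to $G[N(w)]$, which has independence number at most $k$ since $G$ is $K_{1,k+1}$-free. This gives $\deg_G(u)\ge\max\{\delta,|X_i|/k\}$ for every $u\in X_i$. Now let each $u$ take each of its neighbours fractionally with weight $\delta/\deg(u)$. The load on $w$ is then at most $\sum_{i}\min\{|X_i|,k\delta\}\le k^2\delta$. Integrality of flows turns this into a genuine choice of $\delta$ distinct neighbours per vertex with $L=k^2\delta$. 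The symmetric LLL condition then becomes roughly $ek^2/(3\log\delta)<1$, which $\delta\ge 4^{1+k^2}$ guarantees.

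With this lemma, your trimming argument would be a valid alternative to the paper's proof. The paper never trims. It keeps full neighbourhoods and applies the weighted LLL of \Cref{cor:LLL2} with $\alpha=1-1/(3\log\delta)$. There, the exponentially small bad-event probabilities of high-degree vertices pay for their many dependencies, and the same $k$-part decomposition controls those sums. The factor $4^{k^2}$ in the hypothesis comes from $K/p=k^2$ in that computation, not from Ramsey bounds.
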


Note that $K_{1,1}$-free are edgeless graphs, and $K_{1,2}$-free graphs are disjoint unions of cliques, which are domatically full.
Furthermore, \Cref{thm:main} is tight up to an absolute constant factor, as shown in \Cref{prop:alpha2}.

We then investigate which subclasses of star-free graphs admit a linear DOM-binding function.
We prove it for line graphs of rank-$k$ hypergraphs (that is, hypergraphs with edge size at most $k$), which form a subclass of $\forb(K_{1,k+1})$.

\begin{thm}
    \label{thm:linegraph}
    Let $k$ be an integer such that $k\ge 2$.
    If $G$ is the line graph of a rank-$k$ hypergraph, then
    \[
    \dom(G) \ge \frac{\delta(G)}{k(\log k + O(\log \log k))}.
    \]
\end{thm}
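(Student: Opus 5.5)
We reformulate the problem in the hypergraph $\cH$ with $G=L(\cH)$. Each vertex of $G$ is a hyperedge $e$ of $\cH$ with $|e|\le k$. Its closed neighbourhood is the set of hyperedges meeting $e$, that is, $N[e]=\bigcup_{x\in e} S_x$, where $S_x$ denotes the star of hyperedges containing the vertex $x$ of $\cH$. A colouring of the hyperedges with $m$ colours is dominating exactly when, for every hyperedge $e$, every colour appears on some hyperedge through one of the at most $k$ vertices of $e$. Moreover $\delta(G)+1\le |N[e]|\le \sum_{x\in e}|S_x|$, so every hyperedge contains a vertex $x$ with $|S_x|\ge (\delta+1)/k$. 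Call such vertices \emph{heavy*}. It is enough to colour the hyperedges so that, for each hyperedge $e$, the union of the stars at the heavy vertices of $e$ sees all colours.

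The first attempt is a one-round random colouring with $m=\delta/(k(\log k+C\log\log k))$ colours. It fails, because each heavy star only has size about $\delta/k$, so a single star misses about $m e^{-\delta/(km)}$ colours, and a union bound over $m$ colours does not give a constant failure probability. The $\log k$ factor suggests that the right structure is a set cover / fractional covering argument. The plan is therefore as follows.

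First, I orient each hyperedge $e$ to a heavy vertex $h(e)\in e$. This reduces the requirement to: for each heavy vertex $x$, the set of hyperedges meeting $S_x$ contains all colours, where we only need to cover colours using hyperedges incident to heavy vertices in $e$.

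A cleaner route is to build an auxiliary bipartite incidence structure. Let $A$ be the set of heavy vertices. For every hyperedge $e$, we must ensure $\bigcup_{x\in e\cap A} S_x$ sees all colours, so it suffices that each star $S_x$ with $x\in A$ sees all colours. This is a hypergraph with ground set $E(\cH)$ and ranges $S_x$ ($x\in A$), each of size at least $\delta/k$. Each hyperedge lies in at most $k$ ranges, so the set system has maximum degree at most $k$ and minimum range size at least $\delta/k$.

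The key lemma is the following polychromatic colouring statement for set systems: if every set has size at least $s$ and every element lies in at most $k$ sets, then the elements can be coloured with $s/(\log k+O(\log\log k))$ colours so that every set is polychromatic. I would prove it via the Lovász Local Lemma in two rounds.

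In the first round, colour randomly with $m'=s/(\log k+ c\log\log k)$ colours. The bad event for a set $S$ is that some colour is missing. Its probability is at most $m' e^{-s/m'}\le m' k^{-1}(\log k)^{-c}$, which is too large when $m'$ is large compared with $\mathrm{poly}(k)$. To fix this, I first split each large set. Using a random partition of the elements into groups (again via the Local Lemma with the dependence controlled by $k$), I reduce to the case $s=\Theta(\mathrm{poly}(k)\log k)$ while keeping degree $k$ and losing only a $1+o(1)$ factor in $s$. Once $s$ is polynomial in $k$, each bad event has probability $\mathrm{poly}(k)\cdot k^{-1}(\log k)^{-c}$ if we are careless.

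So the refined plan is to use $m'$ colours with slack. Colours missing from a set are repaired by a second round of recolouring: reserve a $1/\log\log k$ fraction of the elements, chosen randomly, as repair elements. Standard alteration then gives that each set misses only $O(m'/\mathrm{polylog}\,k)$ colours, and these are fixed through a matching/Hall argument using reserved elements, since each reserved element serves at most $k$ sets. Each set sees enough reserved elements relative to its deficiency, so Hall's condition holds.

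Applying the lemma with $s=\delta/k$ gives $\dom(G)\ge \delta/(k(\log k+O(\log\log k)))$.

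The main obstacle is the polychromatic lemma with the sharp constant $\log k$: the splitting step and the Hall-type repair must each lose only $(1+o(1))$ factors. I would first try Bollobás–Pritchard–Rothvoss–Scott style bounds on cover-decomposition of set systems with bounded degree, which give exactly $s/(\ln k+O(\ln\ln k))$, and invoke or adapt them directly.
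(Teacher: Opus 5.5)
Your proof is essentially the paper's: vertices of degree at least $\delta/k$ form a transversal, and your polychromatic lemma for the heavy stars is exactly the dual form of the Bollob\'as--Pritchard--Rothvo\ss--Scott cover-decomposition theorem that the paper invokes. The only difference is cosmetic: the paper pads each light vertex with $\delta$ singleton edges so the theorem applies to a hypergraph of minimum degree $\lceil \delta/k\rceil$, whereas you keep only the heavy stars as ranges, which is equivalent.

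Cite that theorem rather than your two-round LLL/Hall sketch, which is unnecessary and does not close as parametrised. Reserving a $1/\log\log k$ fraction of the elements multiplies the per-colour failure probability by roughly $k^{1/\log\log k}$, far more than a Hall-type repair can absorb when each reserved element is shared by up to $k$ sets.
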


Finally, we consider double-star-free graphs in support of \Cref{conj:main}. The smallest double star is $P_4$, and $\forb(P_4)$ coincides with the class of cographs \cite{Sei74}. We show that cographs are linearly DOM-bounded and determine the optimal DOM-binding function.

\begin{thm}
\label{thm:cographs}
For every cograph $G$,
\[
\dom(G) \ge 1+ \frac{\delta(G)}{2}.
\]
\end{thm}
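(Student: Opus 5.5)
We argue by induction on $|V(G)|$, using the recursive structure of cographs: a cograph on at least two vertices is either disconnected, or it is the join $A\vee B$ of two smaller nonempty cographs; this is the characterisation of $\forb(P_4)$ recalled before the statement~\cite{Sei74}. The base case $|V(G)|=1$ is immediate, since $\delta=0$ and $\dom(G)=1$. If $G$ is disconnected with components $G_1,\dots,G_r$, then $\dom(G)=\min_i \dom(G_i)$ and $\delta(G)=\min_i\delta(G_i)$. Applying the induction hypothesis to each component gives $\dom(G)\ge 1+\min_i \delta(G_i)/2 = 1+\delta(G)/2$. So all the work is in the join case.

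Let $G=A\vee B$ with $|A|\le|B|$. Then
\[
\delta:=\delta(G)=\min\bigl(\delta(A)+|B|,\ \delta(B)+|A|\bigr).
\]
The join structure gives three facts about dominating sets of $G$. First, a set meeting both $A$ and $B$ dominates $G$. Second, any dominating set of $A$ dominates $G$, since every vertex of $B$ is adjacent to all of $A$; symmetrically, any dominating set of $B$ dominates $G$. Third, combining domatic partitions of $A$ and $B$ yields $\dom(G)\ge \dom(A)+\dom(B)$, and pairing vertices of $A$ with vertices of $B$ yields $\dom(G)\ge |A|$.

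I would then split into two cases. If $|A|\ge 1+\delta/2$, the pairing argument already suffices: match the vertices of $A$ injectively into $B$ and add the leftover vertices of $B$ to any one pair. Otherwise $|A|<1+\delta/2$. From $\delta\le\delta(B)+|A|$ we get $\delta(B)\ge \delta-|A|$. The induction hypothesis then gives a partition of $B$ into $b\ge 1+(\delta-|A|)/2$ sets, each dominating $B$ and hence $G$. It also gives $a\ge 1+\delta(A)/2$ sets partitioning $A$, each dominating $G$. Together this yields
\[
a+b\ \ge\ 2+\frac{\delta}{2}+\frac{\delta(A)-|A|}{2}.
\]
This is enough whenever $|A|\le \delta(A)+2$, in particular when $A$ is close to a clique.

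The main obstacle is the remaining regime, where $A$ is sparse relative to its order ($|A|>\delta(A)+2$) and $|A|\le\delta/2$. Here neither pure strategy suffices, and one has to mix them. Since a single vertex of $A$ together with a single vertex of $B$ dominates $G$, the plan is to use the vertices of $A$ singly, each paired with a partner in $B$, and to keep as many of the $b$ dominating classes of $B$ intact as possible. What I would try first is to take partners only from classes that are "redundant": a class $C$ of the domatic partition of $B$ from which one vertex can be removed while $C$ still dominates $B$. Alternatively, if a class is not redundant in this sense, it can be sacrificed entirely, but only if it supplies many partners at once.

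To make this accounting work, I expect to strengthen the induction hypothesis, recursing on the co-components of $B$ rather than on $B$ as a whole. For instance, one could prove that a cograph with $\delta(B)\ge d$ admits $1+d/2$ disjoint dominating sets that leave enough spare vertices uncovered. These spare vertices would serve as partners for $A$, so that the total count becomes $|A|+b'$ with $b'\ge 1+(\delta-2|A|)/2$ plus a correction term. Getting this extra slack, so that the pairing with $A$ costs at most half a class per vertex, is the step I expect to require the most care.

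Finally, to confirm that the additive constant $1$ and the factor $\tfrac12$ cannot be improved, I would check tightness on complete multipartite graphs with parts of size $2$.
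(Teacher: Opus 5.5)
\textbf{There is a genuine gap: the join case that carries the content of the theorem is never proved.} Your base case, the disconnected case, and the two easy join sub-cases ($|A|\ge 1+\delta/2$, and $|A|\le \delta(A)+2$) are correct. But in the remaining regime $|A|>\delta(A)+2$, $|A|\le\delta/2$, you give only a plan: ``redundant'' classes, plus a strengthened induction hypothesis that is never stated or proved. So the theorem is not established. The redundant-class route also cannot work as stated. In $K_{2,\dots,2}$ every class of size $2$ is a minimal dominating set, so no vertex can be taken from any class without destroying it.

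\textbf{The missing idea.} No strengthening of the hypothesis is needed; two observations close the gap.

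First, strip the set $S$ of universal vertices of $G$. Each forms its own class, so $\dom(G)=|S|+\dom(G\setminus S)$, while $\delta(G)=|S|+\delta(G\setminus S)$ (if $G\setminus S$ is empty, $G$ is complete). Induction on $G\setminus S$ then gives $\dom(G)\ge 1+\delta/2+|S|/2$.

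Second, assume $G$ has no universal vertex. Then $|A|\ge 2$, and no single vertex dominates $B$, since it would be universal in $G$. Hence every class of a maximum domatic partition of $B$ has at least $2$ vertices. This is exactly the ``many partners'' you were looking for: sacrificing whole classes costs at most half a class per vertex of $A$, which is the rate you identified as sufficient. Split into two cases.
\begin{itemize}
\item If $\dom(B)=|B|/2$, then $\dom(G)\ge\dom(A)+\dom(B)\ge 1+(\delta(A)+|B|)/2\ge 1+\delta/2$.
\item Otherwise some class has size at least $3$. Destroy it, together with further classes until at least $|A|$ vertices are freed; this is possible since $|B|\ge|A|$. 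It uses at most $\lceil(|A|-1)/2\rceil\le |A|/2$ classes. Match the freed vertices to $A$ to obtain $|A|$ dominating pairs, and put the leftover freed vertices into any surviving class. This gives $\dom(G)\ge \dom(B)+|A|/2\ge 1+(\delta(B)+|A|)/2\ge 1+\delta/2$.
\end{itemize}
The case split is needed for parity. If all classes have size $2$ and $|A|$ is odd, sacrificing classes yields only $\dom(B)+(|A|-1)/2$, which falls short when $\delta$ is odd. This is the paper's argument, and it makes your Cases 1 and 2a unnecessary.
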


The bound in \Cref{thm:cographs} is tight due to complete multipartite graphs where every part has size 2.

\paragraph{Outline of the paper}

In \Cref{sec:proba}, we introduce the probabilistic tools required in our proofs, namely a special case of
the Lovász Local Lemma. 
In \Cref{sec:constructions}, we present our constructions that illustrate the tightness
of our characterisation of DOM-bounded classes of graphs. 
In \Cref{sec:disconnected}, we reduce disconnected forbidden induced subgraphs to connected ones for DOM-boundedness. Namely, we show that if $H$ is a connected graph such that the class of $H$-free graphs is DOM-bounded, then so is the class of $(H + I)$-free graphs, where $I$ is any independent set. 
In \Cref{sec:linegraphs}, we prove that line graphs are DOM-bounded with a linear binding function.
In \Cref{sec:cographs}, we
prove that cographs are DOM-bounded with a linear binding function.
In \Cref{sec:stars}, we show
how to obtain a quasilinear lower bound for the domatic number in terms of the minimum degree with a randomised colouring. We first apply this strategy to unit disk graphs, then extend it to a more abstract setting that will be applicable to induced-$K_{1,t}$-free graphs.
Finally, in \Cref{sec:conclusion}, we present some open problems. 

\section{Preliminaries}
\label{sec:proba}

\subsection{Notation}

All graphs in this paper are finite and simple. 
For two vertex-disjoint graphs $G$ and $H$, we denote by $G+H$ their disjoint union. 
If $r$ is a nonnegative integer, we write $G+rK_1$ for the graph obtained from $G$ by adding $r$ isolated vertices. 
If $X\subseteq V(G)$, we write $G\setminus X$ for the induced subgraph $G[V(G)\setminus X]$; if $v\in V(G)$, we write $G\setminus v$ instead of $G\setminus\{v\}$.

\subsection{Total domatic number}

Given a graph $G$, a \emph{total dominating set} of $G$ is a subset $X\subseteq V(G)$ such that every vertex $v\in V(G)$ has at least one neighbour in $X$. Note that no total dominating set exists in $G$ if $G$ has an isolated vertex; otherwise $V(G)$ is a total dominating set of $G$. Observe also that all total dominating sets of a loopless graph $G$ have size at least $2$. 
We denote $\tdom(G)$ the \emph{total domatic number} of $G$, that is the maximum size of a partition of $V(G)$ into total dominating sets, which we call a \emph{total domatic partition of $G$}. This corresponds to the colour classes of a \emph{coupon colouring of $G$}, as defined in \cite{chen2015coupon}, which in turn corresponds to a panchromatic colouring \cite{kos02} (sometimes also called polychromatic colouring, e.g. in \cite{bollobas2013cover}) of its (open) neighbourhood hypergraph.

\begin{lemma}\label{lem:dom-tdom}
    For every graph $G$,
    \[\floor{\frac{\dom(G)}{2}} \le \tdom(G) \le \dom(G).\]
\end{lemma}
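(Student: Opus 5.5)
The upper bound is immediate from the definitions. Every total dominating set is a dominating set: if every vertex has a neighbour in $X$, then $N[X]\supseteq N(X)=V(G)$. So any total domatic partition is also a domatic partition, and $\tdom(G)\le\dom(G)$. If $G$ has an isolated vertex, we adopt the convention $\tdom(G)=0$, and the inequality holds trivially.

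For the lower bound, let $k=\dom(G)$ and fix a domatic partition $D_1,\dots,D_k$. If $k\le 1$ there is nothing to prove, since $\floor{k/2}=0$. Otherwise we pair the classes and set
\[
T_i = D_{2i-1}\cup D_{2i}, \qquad 1\le i\le \floor{k/2},
\]
and if $k$ is odd we merge the leftover class $D_k$ into $T_1$. This gives a partition of $V(G)$ into $\floor{k/2}$ parts, and it remains to show that each $T_i$ is a total dominating set.

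Take a vertex $v$ and write $D=D_{2i-1}$, $D'=D_{2i}$. Since $D$ and $D'$ are disjoint, $v$ lies in at most one of them. If $v\notin D$, then $D$ dominates $v$ without containing it, so $v$ has a neighbour in $D\subseteq T_i$. Otherwise $v\in D$, hence $v\notin D'$, and $v$ has a neighbour in $D'\subseteq T_i$. In either case $v$ has a neighbour in $T_i$.

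There is one degenerate case to check: $k\ge 2$ forces $G$ to have no isolated vertex. An isolated vertex would have to belong to every dominating set, which is impossible for two disjoint ones. So $\tdom$ is well defined here, and $\tdom(G)\ge\floor{\dom(G)/2}$. The only step requiring any thought is the pairing observation, namely that two disjoint dominating sets together totally dominate. I expect no real obstacle.
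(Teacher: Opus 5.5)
Your proof is correct and follows the paper's argument. The upper bound holds because a total dominating set is dominating, and the lower bound comes from merging disjoint dominating sets in pairs; the paper puts the odd leftover class with the last pair, you put it into $T_1$, which makes no difference since a superset of a total dominating set is total dominating. Your explicit check that $\dom(G)\ge 2$ rules out isolated vertices is a detail the paper leaves implicit.
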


\begin{proof}
    Let $G$ be a graph.
    The bound $\tdom(G) \le \dom(G)$ is trivial since a total dominating set of $G$ is always, in particular, a dominating set of $G$. Let us prove the other inequality.
    The union of two disjoint dominating sets of $G$ is a total dominating set. So, given a domatic partition $\cP$ of $G$ of size $k$, one obtains a total domatic partition of $G$ of size $\floor{k/2}$ by merging the first $2\floor{k/2}-2$ parts of $\cP$ by pairs, and the last two or three ones together.
\end{proof}

Observe that the first inequality is best possible, as witnessed by $G=K_n$, which has $\dom(G)=n$ and $\tdom(G)=\floor{n/2}$.
The second inequality is also best possible, as witnessed by $G=K_{n,n}$, which has $\dom(G)=\tdom(G)=n$.
One could define the notion of TDOM-boundedness similarly to that of DOM-boundedness; \Cref{lem:dom-tdom} implies that these two notions are equivalent.

\subsection{Probabilistic tools}

We rely on the two following corollaries of the general Lov\'asz Local Lemma, of increasing strength and technicality. 

\begin{thm}[LLL]
    \label{thm:LLL}
    Let $A_1,...,A_n$ be events in a probability space $\Omega$ with dependency graph $\Gamma$.  If there exist real numbers $0 \le x_1,...,x_n < 1$ such that, for every $i\in [n]$,
    \[\pr{A_i} \le x_i \prod_{A_j \in N_\Gamma(A_i)} (1-x_j),\]
    then 
    \[\pr{\bigwedge_{i \in [n]} \overline{A_i}} \ge \prod_{i\in [n]} (1-x_i) > 0.\]
\end{thm}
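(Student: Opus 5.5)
This version of the Local Lemma follows from the standard chain rule argument; the real content is an auxiliary inequality proved by induction. Write $B_i=\overline{A_i}$. For a set $S\subseteq[n]$, write $B_S=\bigwedge_{j\in S}B_j$, with $B_\emptyset$ the whole space. The plan is to prove the following claim by induction on $|S|$: for every $S\subseteq[n]$ with $\pr{B_S}>0$ and every $i\notin S$,
\[
\pr{A_i \mid B_S}\le x_i .
\]
Granting the claim, the chain rule gives
\[
\pr{\bigwedge_{i\in[n]}\overline{A_i}}=\prod_{i=1}^n \pr{B_i\mid B_{[i-1]}}\ge\prod_{i=1}^n(1-x_i)>0.
\]
The positivity of each conditioning event follows inductively along the same chain, since each factor is at least $1-x_i>0$.

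For the claim, fix $i$ and $S$. Split $S$ into $S_1=\{j\in S: A_j\in N_\Gamma(A_i)\}$ and $S_2=S\setminus S_1$. If $S_1=\emptyset$, then $A_i$ is mutually independent of the family $\{A_j: j\in S_2\}$, by the defining property of the dependency graph. Hence $\pr{A_i\mid B_S}=\pr{A_i}$, and this is at most $x_i$ because the product in the hypothesis is at most $1$. Otherwise, write
\[
\pr{A_i\mid B_S}=\frac{\pr{A_i\wedge B_{S_1}\mid B_{S_2}}}{\pr{B_{S_1}\mid B_{S_2}}}.
\]
Bound the numerator by $\pr{A_i\mid B_{S_2}}=\pr{A_i}$, again using independence from the non-neighbours. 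Then
\[
\pr{A_i\wedge B_{S_1}\mid B_{S_2}}\le\pr{A_i}\le x_i\prod_{A_j\in N_\Gamma(A_i)}(1-x_j).
\]

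The main step is the lower bound on the denominator. Enumerate $S_1=\{j_1,\dots,j_r\}$ and expand by the chain rule:
\[
\pr{B_{S_1}\mid B_{S_2}}=\prod_{t=1}^r\Bigl(1-\pr{A_{j_t}\mid B_{S_2\cup\{j_1,\dots,j_{t-1}\}}}\Bigr).
\]
Each conditioning set here is a proper subset of $S$ that does not contain $j_t$, so the induction hypothesis applies to each factor. The product is therefore at least $\prod_{t}(1-x_{j_t})\ge\prod_{A_j\in N_\Gamma(A_i)}(1-x_j)$, because the extra factors are at most $1$. Dividing the numerator bound by this denominator bound gives $\pr{A_i\mid B_S}\le x_i$, which completes the induction.

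The only delicate point is well-definedness: every conditional probability used must condition on an event of positive probability. This is handled by running the induction jointly with the statement $\pr{B_S}\ge\prod_{j\in S}(1-x_j)>0$, which follows from the same chain-rule expansion.
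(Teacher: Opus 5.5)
Your proof is correct: it is the standard Erd\H{o}s--Lov\'asz/Spencer argument. You prove $\pr{A_i \mid \bigwedge_{j\in S}\overline{A_j}}\le x_i$ by induction on $|S|$, splitting $S$ into neighbours and non-neighbours of $A_i$ in $\Gamma$, then conclude with the chain rule, and you handle positivity of the conditioning events correctly. The paper does not prove this statement at all; it quotes it as the general Lov\'asz Local Lemma, so your argument is the textbook proof behind that citation.
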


The following corollary appears almost verbatim in \cite{MoRe02}, albeit with a slightly stronger hypothesis. We include its short proof for completeness.

\begin{cor}
    \label{cor:LLL}
    Let $A_1,...,A_n$ be events in a probability space $\Omega$ with dependency graph $\Gamma$. If, for all $i\in [n]$, one has $\pr{A_i}<1$ and 
    \begin{equation}
        \label{eq:LLL}
        \sum_{A_j \in N_\Gamma(A_i)} \pr{A_j} \le \frac{1}{4},
    \end{equation} 
    then $\pr{\bigwedge_{i \in [n]} \overline{A_i}} >0$.
\end{cor}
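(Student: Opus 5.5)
The plan is to derive \Cref{cor:LLL} from \Cref{thm:LLL} by choosing $x_i = 2\pr{A_i}$. First I deal with degenerate values. If some $\pr{A_i}=0$, then $x_i = 0$ and the hypothesis of \Cref{thm:LLL} for index $i$ reads $0\le 0$, which holds automatically. I also need $x_i<1$, that is, $\pr{A_i}<1/2$. This does not follow from \eqref{eq:LLL} when $A_i$ has no neighbour in $\Gamma$, since then the sum is empty. I handle that case separately.

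Let $I$ be the set of isolated vertices of $\Gamma$. The events $A_i$ with $i\in I$ are mutually independent of everything else. By the definition of a dependency graph, each such $A_i$ is independent of the $\sigma$-algebra generated by the other events. So I can factor
\[\pr{\bigwedge_i \overline{A_i}} = \prod_{i\in I}\bigl(1-\pr{A_i}\bigr)\cdot \pr{\bigwedge_{i\notin I}\overline{A_i}}.\]
The first factor is positive because $\pr{A_i}<1$ by hypothesis. This reduces the problem to the events with at least one neighbour in $\Gamma$.

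For a non-isolated $A_i$, pick a neighbour $A_j$. The key observation is that $A_i\in N_\Gamma(A_j)$, so applying \eqref{eq:LLL} at $j$ gives $\pr{A_i}\le 1/4$. Hence $x_i = 2\pr{A_i}\le 1/2<1$. I then verify the condition of \Cref{thm:LLL}. Since each $x_j\le 1/2$, we can use the inequality $1-x\ge 4^{-x}$, valid on $[0,1/2]$ because $4^{-x}$ is convex and equals $1-x$ at both endpoints. This gives
\[\prod_{A_j\in N_\Gamma(A_i)}(1-x_j)\;\ge\; 4^{-\sum_j x_j} \;=\; 4^{-2\sum_j \pr{A_j}}\;\ge\; 4^{-1/2} \;=\; \tfrac12.\]
Therefore
\[x_i\prod_{A_j\in N_\Gamma(A_i)}(1-x_j)\;\ge\; 2\pr{A_i}\cdot\tfrac12 \;=\; \pr{A_i},\]
which is the required hypothesis. \Cref{thm:LLL} then yields positivity for the non-isolated events, and combining with the factorisation above finishes the proof.

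The only delicate points are the isolated-vertex case and the check that $x_i<1$. The inequality $1-x\ge 4^{-x}$ on $[0,1/2]$ is the single quantitative ingredient.
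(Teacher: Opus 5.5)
Your proof is correct and follows the paper's argument: you set $x_i = 2\pr{A_i}$, remove the isolated vertices of $\Gamma$ using independence and $\pr{A_i}<1$, and bound $\pr{A_i}\le 1/4$ for non-isolated events via the hypothesis at a neighbour. You then conclude with $1-x\ge 4^{-x}$ on $[0,1/2]$ and \Cref{thm:LLL}. The only cosmetic difference is that you factor out all isolated events at once, while the paper peels them off one at a time.
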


\begin{proof}
    If there is an isolated vertex $i$ in $\Gamma$, then $A_i$ is independent from the other events, so 
    \[\pr{\bigwedge_{j\neq i} \overline{A_j}} >0 \implies \pr{\bigwedge_{j \in [n]} \overline{A_j}} >0.\]
    We therefore assume that $\Gamma$ contains no isolated vertex. In particular, we have $\pr{A_i}\le 1/4$ for all $i\in [n]$. 
    We let $x_i \coloneqq 2\pr{A_i} \le 1/2$ for each $i\in [n]$.
    Using the inequality $1-x \ge 4^{-x}$ for all $0\le x \le 1/2$, we obtain
    \begin{align*}
        x_i \prod_{ij \in E(\Gamma)} (1-x_j) &\ge x_i 4^{-\sum_{ij \in E(\Gamma)} x_j} = x_i 4^{-2\sum_{ij \in E(\Gamma)} \pr{A_j}} \\
        & \ge x_i 4^{-2 \times \frac{1}{4}} = \frac{x_i}{2} = \pr{A_i},
    \end{align*}
    for all $i\in [n]$. The conclusion follows from \Cref{thm:LLL}.
\end{proof}

The following more technical corollary lets us introduce a trade-off between the cumulative probabilities of neighbouring events of a given random event, and its own probability.

\begin{cor}
    \label{cor:LLL2}
    Let $A_1,...,A_n$ be events in a probability space $\Omega$ with dependency graph $\Gamma$. If there exists $\alpha < 1$ such that, for all $i\in [n]$, one has $\pr{A_i}^\alpha\le \frac{1}{2}$ and 
    \begin{equation}
        \label{eq:LLL2}
        \sum_{A_j \in N_\Gamma(A_i)} \pr{A_j}^\alpha \le (1-\alpha)\log_4 \frac{1}{\pr{A_i}},
    \end{equation} 
    then $\pr{\bigwedge_{i \in [n]} \overline{A_i}} >0$.
\end{cor}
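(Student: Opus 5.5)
We will derive this from \Cref{thm:LLL}, in the same way \Cref{cor:LLL} was obtained, with the choice $x_i \coloneqq \pr{A_i}^\alpha$.

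First, handle the degenerate cases. If $\pr{A_i}=0$ for some $i$, we set $x_i=0$; this index imposes no constraint, so we may assume $\pr{A_i}>0$ for all $i$. The hypothesis $\pr{A_i}^\alpha\le \frac12$ gives $x_i\in(0,1/2]$, which is below $1$ as \Cref{thm:LLL} requires. We also note that $\alpha>0$ is forced whenever some $\pr{A_i}\in(0,1)$. Indeed, if $\alpha\le 0$, then $\pr{A_i}^\alpha\ge 1>\frac12$, which contradicts the hypothesis. So $0<\alpha<1$ throughout.

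Next, verify the condition of \Cref{thm:LLL}. As in the proof of \Cref{cor:LLL}, apply $1-x\ge 4^{-x}$, valid for $0\le x\le 1/2$, to each neighbour $A_j$ of $A_i$ in $\Gamma$. Then \eqref{eq:LLL2} gives
\[
x_i\prod_{A_j\in N_\Gamma(A_i)}(1-x_j)\;\ge\; x_i\,4^{-\sum_{A_j\in N_\Gamma(A_i)}\pr{A_j}^\alpha}\;\ge\; \pr{A_i}^\alpha\, 4^{-(1-\alpha)\log_4(1/\pr{A_i})}.
\]
The last factor equals $\pr{A_i}^{1-\alpha}$. Hence the right-hand side is $\pr{A_i}^\alpha\pr{A_i}^{1-\alpha}=\pr{A_i}$, which is exactly the required inequality. \Cref{thm:LLL} then gives $\pr{\bigwedge_i \overline{A_i}}\ge\prod_i(1-x_i)>0$.

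No step is a real obstacle. The only points needing care are the sign of $\alpha$ and events of probability $0$ or $1$, and the hypothesis $\pr{A_i}^\alpha\le 1/2$ rules out probability $1$. The key choice is $x_i=\pr{A_i}^\alpha$, which makes the exponent $1-\alpha$ in \eqref{eq:LLL2} cancel exactly.
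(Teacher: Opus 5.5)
Your proof is correct and follows the paper's argument exactly. You make the same choice $x_i = \pr{A_i}^\alpha$, use the same inequality $1-x\ge 4^{-x}$ on $[0,1/2]$, and get the same cancellation $4^{-(1-\alpha)\log_4(1/\pr{A_i})} = \pr{A_i}^{1-\alpha}$ before invoking \Cref{thm:LLL}. Your remarks on zero-probability events and on the sign of $\alpha$ are sound; they just make explicit degenerate cases that the paper leaves implicit.
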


\begin{proof}
    We let $x_i \coloneqq \pr{A_i}^\alpha \le 1/2$ for each $i\in [n]$.
    Using the inequality $1-x \ge 4^{-x}$ for all $0\le x \le 1/2$, we obtain
    \begin{align*}
        x_i \prod_{ij \in E(\Gamma)} (1-x_j) &\ge x_i 4^{-\sum_{ij \in E(\Gamma)} x_j} = x_i 4^{-\sum_{ij \in E(\Gamma)} \pr{A_j}^\alpha} \\
        & \ge x_i 4^{(\alpha-1)\log_4 \frac{1}{\pr{A_i}}} =  \pr{A_i},
    \end{align*}
    for all $i\in [n]$. The conclusion follows from \Cref{thm:LLL}.
\end{proof}

In order to analyse a random construction in \Cref{sec:constructions}, we shall also use a Chernoff bound.

\begin{thm}\label{thm:Chernoff}
    Let $X_1,\dots,X_n$ be independent random variables taking values in $\{0,1\}$, let $X \coloneqq X_1 + \dots + X_n$, and $\mu \coloneqq \mathbb{E}(X)$. Let $0 \le \delta \le 1$. Then
    \[\pr{X \le (1-\delta)\mu} \le e^{-\delta^2\mu/2}.\]
\end{thm}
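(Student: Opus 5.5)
This is the standard lower-tail Chernoff bound, and I would prove it by the exponential moment method.

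Fix $t>0$ and write $p_i \coloneqq \pr{X_i=1}$, so that $\mu=\sum_i p_i$. Since $x\mapsto e^{-tx}$ is decreasing, Markov's inequality gives
\[
\pr{X \le (1-\delta)\mu} = \pr{e^{-tX} \ge e^{-t(1-\delta)\mu}} \le e^{t(1-\delta)\mu}\,\mathbb{E}\bigl[e^{-tX}\bigr].
\]
By independence, $\mathbb{E}[e^{-tX}]=\prod_i \mathbb{E}[e^{-tX_i}]=\prod_i \bigl(1-p_i(1-e^{-t})\bigr)$. Using $1+y\le e^{y}$ for each factor, this is at most $\exp\bigl(-(1-e^{-t})\mu\bigr)$. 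Hence
\[
\pr{X \le (1-\delta)\mu} \le \exp\bigl(\mu\,(t(1-\delta) - 1 + e^{-t})\bigr).
\]

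For $0\le\delta<1$, choose $t=-\ln(1-\delta)>0$, so that $e^{-t}=1-\delta$. The bound becomes
\[
\pr{X \le (1-\delta)\mu} \le \exp\bigl(-\mu\,\bigl(\delta+(1-\delta)\ln(1-\delta)\bigr)\bigr).
\]
It remains to show that $g(\delta)\coloneqq \delta+(1-\delta)\ln(1-\delta)-\delta^2/2 \ge 0$ on $[0,1)$. This is the only step that needs care, and it is a routine calculus check: $g(0)=0$, $g'(\delta)=-\ln(1-\delta)-\delta$, which is $\ge 0$ because $-\ln(1-\delta)\ge\delta$. So $g$ is nondecreasing and therefore nonnegative.

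The endpoint $\delta=1$ needs a separate argument, since $t=-\ln(1-\delta)$ is then undefined. Here the event is $\{X\le 0\}=\{X=0\}$, and by independence
\[
\pr{X=0}=\prod_i(1-p_i)\le e^{-\mu}\le e^{-\mu/2}.
\]
Alternatively, one can let $\delta\to1^-$, since the events $\{X\le(1-\delta)\mu\}$ decrease to $\{X=0\}$ as $\delta\to1^-$ and the bound is continuous in $\delta$.

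The degenerate case $\mu=0$ is trivial, because the right-hand side of the claimed bound equals $1$.
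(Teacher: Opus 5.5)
Your proof is correct. It is the standard exponential-moment argument for the lower tail: Markov's inequality applied to $e^{-tX}$, the estimate $1+y\le e^{y}$, the choice $e^{-t}=1-\delta$, and the check that $\delta+(1-\delta)\ln(1-\delta)\ge\delta^2/2$. Your separate treatment of $\delta=1$ is also sound.

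The paper gives no proof of this statement; it quotes the bound as a standard tool, so there is no route to compare against. One small slip: at $\delta=0$ your choice gives $t=0$, not $t>0$. This is harmless, because the right-hand side is then $1$ and the bound is trivial.
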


\section{Constructions with small domatic number and high minimum degree}
\label{sec:constructions}

Zelinka \cite{zelinka1983domatic} showed that there exist graphs of arbitrarily large minimum degree with bounded (total) domatic number. We repeat the short proof as a warm-up for the next constructions presented in this section, which adapt Zelinka's construction to satisfy some structural constraints. 

\begin{thm}[Zelinka, 1983]\label{thm:degree-tdom=1}
    For every integer $\delta \ge 2$, there exists a graph of minimum degree $\delta$ and total domatic number $1$.
\end{thm}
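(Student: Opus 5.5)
The plan is to use Zelinka's bipartite "all $\delta$-subsets" construction. Let $n = 2\delta - 1$ and let $A$ be a set of $n$ vertices. For every $\delta$-subset $S \subseteq A$, add a new vertex $b_S$ adjacent exactly to the vertices of $S$. Let $B$ be the set of these new vertices. Then every vertex $b_S$ has degree exactly $\delta$. Every vertex $a\in A$ has degree $\binom{2\delta-2}{\delta-1}\ge \delta$ for $\delta\ge 2$, since this is the number of $\delta$-subsets containing $a$. So $\delta(G)=\delta$. If one insists on $\delta(G)$ being exactly $\delta$ rather than at least $\delta$, this already holds because the vertices of $B$ attain it.

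It remains to show $\tdom(G)=1$. Suppose instead that $V(G)$ has a partition into two total dominating sets $X_1,X_2$. Only the vertices of $A$ matter for dominating $B$: the vertex $b_S$ has neighbourhood $S\subseteq A$, so $b_S$ is totally dominated by $X_i$ if and only if $S\cap X_i\neq\emptyset$. Since $|A|=2\delta-1$, one of $X_1\cap A$, $X_2\cap A$ has size at most $\delta-1$; say $|X_1\cap A|\le\delta-1$. Then $|A\setminus X_1|\ge\delta$. Choosing $S\subseteq A\setminus X_1$ of size $\delta$ gives a vertex $b_S$ with no neighbour in $X_1$. This contradicts $X_1$ being total dominating. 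Hence $\tdom(G)\le 1$. Moreover $V(G)$ itself is total dominating because $G$ has no isolated vertex, so $\tdom(G)=1$.

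There is no real obstacle. The only step needing care is the pigeonhole choice of $|A|=2\delta-1$: it makes any $2$-partition of $A$ leave some part missing a $\delta$-set, while keeping the $A$-side degrees at least $\delta$. As a remark, \Cref{lem:dom-tdom} then yields $\dom(G)\le 2\tdom(G)+1=3$, and a slightly adjusted version (with $|A|=3\delta-2$, so that some part of any $3$-partition of $A$ misses a $\delta$-set) should give domatic number $2$, as quoted in the introduction.
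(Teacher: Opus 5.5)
Your proof is correct and is essentially the paper's argument. You use the same bipartite incidence graph of the complete $\delta$-uniform hypergraph on $2\delta-1$ vertices, with pigeonhole forcing a $\delta$-subset of $A$ into one class, so the corresponding vertex of $B$ is not totally dominated by the other class; you also check the degrees in $A$ and that $\tdom(G)\ge 1$, which the paper leaves implicit.
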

\begin{proof}
    Let $n \coloneqq 2d-1$, and let $G_d$ be the graph with vertex sets $A \cup B$, where $A \coloneqq [n]$ and $B \coloneqq \binom{A}{d}$, and edge set $\{ab \in A \times B : a \in b\}$. In any $2$-colouring $\phi$ of $G_d$, there exists a monochromatic subset $S \subseteq A$ of size $d$, which corresponds to an open neighbourhood in $G_d$, therefore $\phi$ is not a total dominating colouring. We conclude that $\tdom(G_d)=1$.
\end{proof}
\begin{rk}
    With a similar argument and $n \ge 3d - 2$, we have a graph of minimum degree $d$ and domatic number $2$.
    We remark that the structure of $G_d[A]$ does not impact the argument, thus adding any subset of edges to $A$ still yields a graph of minimum degree $d$ and total domatic number $1$. In particular, adding all possible edges within $A$ results in a split graph ($A$ is the clique, and $B$ is the independent set).
\end{rk}

Given a hypergraph $\cH$, the \emph{bipartite incidence graph} of $\cH$ is a graph with vertex set $V(\cH) \cup E(\cH)$, and such that $v \in V(\cH)$ is adjacent to $e\in E(\cH)$ if and only if $v \in e$. Observe that the graph $G_d$ constructed in the proof of \Cref{thm:degree-tdom=1} is precisely the bipartite incidence graph of $K_n^{(d)}$, the complete $d$-uniform hypergraph on $n \coloneqq 2d-1$ vertices.

The chromatic number of a hypergraph $\cH$ is the minimum number of colours needed to colour $V(\cH)$ in such a way that no edge of $\cH$ is monochromatic. The proof of \Cref{thm:degree-tdom=1} relied on the simple fact that $K_n^{(d)}$ has chromatic number at least $3$ if $n\ge 2d-1$, thus attempting to colour the vertices of $K_n^{(d)}$ with $2$ colours produces a monochromatic edge. Zelinka's construction can be generalised by considering the bipartite incidence graph of any $d$-uniform hypergraphs of \emph{chromatic number} at least $3$. In particular, if said hypergraph has no short ``cycles'', then its bipartite incidence graph has large girth.

A \emph{Berge cycle} in a hypergraph is a sequence $(v_1,e_1,v_2,e_2,\dots,v_\ell,e_\ell)$ of alternating and distinct vertices and edges, such that $\{v_i,v_{i+1}\} \subseteq e_i$ for $i \in [\ell-1]$ and $\{v_1,v_\ell\} \subseteq e_\ell$. The \emph{girth} of a hypergraph $\cH$, denoted $g(\cH)$, is the length of its smallest Berge cycle.
The existence of $d$-uniform hypergraphs with arbitrarily large girth and chromatic number was first proved by Erd\H{o}s \cite{erdos1959graph}. We refer to \cite{alon2016coloring} for references and a simple construction.

\begin{thm}[Erd\H{o}s, 1959]\label{thm:hypergraph-girth-chromatic}
    For every $k,g,d \ge 2$, there exists a $d$-uniform hypergraph $\cH$ of girth at least $g$, and chromatic number at least $k$.
\end{thm}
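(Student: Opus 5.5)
We prove the statement by the probabilistic method: take a random $d$-uniform hypergraph, delete one vertex from each short Berge cycle, and check that the chromatic number survives. Fix $k,g,d\ge 2$. Take $n$ large and let $\cH$ be the random $d$-uniform hypergraph on $[n]$ in which each $d$-subset is an edge independently with probability $p = n^{1-d+\theta}$, where $\theta \coloneqq 1/(2g)$. The expected number of edges is of order $n^{1+\theta}$, so the average vertex degree is about $n^{\theta}$.

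\textbf{Short cycles.} We count Berge cycles of length $\ell$ for $2\le \ell < g$. Such a cycle is fixed by $\ell$ distinct vertices and $\ell$ distinct edges, each edge containing two prescribed vertices. There are at most $n^{\ell}\cdot (n^{d-2})^{\ell}$ such configurations, and each occurs with probability $p^{\ell}$. The expected number of cycles of length $\ell$ is therefore at most
\[
n^{\ell(d-1)}\, n^{\ell(1-d+\theta)} = n^{\ell\theta} \le n^{1/2}.
\]
Summing over $\ell<g$ gives expectation $O(g\, n^{1/2})$. By Markov's inequality, with probability at least $3/4$ there are fewer than $n/2$ short Berge cycles.

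\textbf{Independent sets.} Set $t \coloneqq \ceil{n/(2k)}$. A fixed $t$-set spans $\binom{t}{d} \ge c_{d,k}\, n^{d}$ potential edges, and it is independent with probability $(1-p)^{\binom{t}{d}} \le \exp(-c\, n^{1+\theta})$. The union bound over at most $2^n$ sets gives a failure probability tending to $0$. Hence, for $n$ large, with positive probability both events hold: fewer than $n/2$ short Berge cycles, and no independent set of size $t$.

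\textbf{Deletion.} Remove one vertex from each short Berge cycle, together with all edges containing it. This destroys every such cycle, since a cycle uses each of its vertices. The remaining hypergraph $\cH'$ has more than $n/2$ vertices and girth at least $g$. Every independent set of $\cH'$ is independent in $\cH$, so it has fewer than $t$ vertices. Consequently
\[
\chi(\cH') \ge \frac{n/2}{t} \approx k .
\]
To get at least $k$ exactly, we take $t \coloneqq \ceil{n/(2k+2)}$ instead; the estimates above go through unchanged.

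\textbf{Main obstacle.} The delicate step is the Berge-cycle count. We must check that the length-$2$ case, where two edges share two vertices, is covered by the same computation, and that it is, provided $d\ge 2$ and the edges are distinct. We must also note that deleting a vertex only removes edges and never creates a short cycle. Alternatively, one can simply cite the explicit construction of \cite{alon2016coloring}.
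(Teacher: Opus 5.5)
Your proof is correct. It is the classical Erd\H{o}s random-plus-deletion argument, carried over to $d$-uniform hypergraphs. The key estimates hold:
\begin{itemize}
\item The expected number of Berge cycles of length $\ell<g$ is at most $n^{\ell(d-1)}p^{\ell}=n^{\ell\theta}<n^{1/2}$. The factor $p^\ell$ is exact because the $\ell$ edges are distinct, and the case $\ell=2$ is covered by the same count.
\item Every $t$-set with $t=\Theta(n)$ spans $\Omega(n^{d})$ potential edges, so $p\binom{t}{d}=\Omega(n^{1+\theta})$, which beats the $2^n$ union bound.
\item Deleting vertices, together with all edges containing them, leaves an induced subhypergraph that is still $d$-uniform, and its independent sets are independent in $\cH$.
\end{itemize}

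The paper does not prove this statement at all. It quotes it as a known theorem and refers to \cite{alon2016coloring} for references and a simple construction, which is exactly the alternative you mention at the end. Your route gives a self-contained but non-constructive argument; the paper's gives brevity. The only use of the statement is in \Cref{prop:large-girth} with $k=d+1$, where mere existence suffices, so either approach works.

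One simplification: your final adjustment of $t$ is unnecessary. Having no independent set of size $t=\ceil{n/(2k)}$ already gives $\alpha(\cH')\le t-1<n/(2k)$. Hence $\chi(\cH')\ge |V(\cH')|/\alpha(\cH')>(n/2)/(n/(2k))=k$ directly.
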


A hypergraph $\cH$ is called \emph{vertex-critical} if every proper induced subgraph of $\cH$ has smaller chromatic number. The degree of a vertex is the number of incident edges.

\begin{lemma}\label{lem:critical-hypergraph}
    Let $\cH$ be a vertex-critical hypergraph of chromatic number $k$. Then $\cH$ has minimum degree at least $k-1$.
\end{lemma}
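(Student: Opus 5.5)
We argue by contradiction, in the same way as for vertex-critical graphs. Suppose some vertex $v \in V(\cH)$ has degree at most $k-2$. We will build a proper $(k-1)$-colouring of $\cH$, contradicting $\chi(\cH) = k$.

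First we colour everything except $v$. Since $\cH$ is vertex-critical, the induced subhypergraph $\cH \setminus v$ has chromatic number at most $k-1$. Here $\cH\setminus v$ keeps exactly the edges of $\cH$ not containing $v$. Fix a proper colouring $\phi$ of $\cH \setminus v$ with colours in $[k-1]$, so that no edge avoiding $v$ is monochromatic.

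Next we extend $\phi$ to $v$. Only edges containing $v$ are at risk. Each such edge $e$ has size at least $2$, since an edge of size $1$ would make $\cH$ uncolourable. An edge $e \ni v$ becomes monochromatic only if every vertex of $e \setminus \{v\}$ already has a common colour $c_e$ and $v$ also receives $c_e$. So each edge through $v$ forbids at most one colour for $v$. There are at most $k-2$ such edges, hence at most $k-2$ forbidden colours, and some colour in $[k-1]$ remains available for $v$.

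Assigning that colour to $v$ gives a proper $(k-1)$-colouring of $\cH$, which is a contradiction. Hence every vertex has degree at least $k-1$.

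The only delicate step is the convention for hypergraphs. We need that induced subhypergraphs keep exactly the edges contained in the remaining vertex set, and that edges of size $1$ do not occur. The latter follows because singleton edges make the chromatic number infinite, so they are excluded whenever $\chi(\cH) = k$ is finite. Once these conventions are fixed, the argument is just the greedy step above.
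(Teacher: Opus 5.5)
Your proof is correct and follows the paper's argument: by vertex-criticality, colour $\cH\setminus v$ properly with $k-1$ colours, then extend greedily to $v$, since each of the at most $k-2$ edges through $v$ forbids at most one colour. Your remarks on conventions (induced subhypergraphs keep exactly the edges avoiding $v$, and there are no singleton edges) state explicitly what the paper's proof assumes without comment.
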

\begin{proof}
    Suppose that $\cH$ has a vertex $v$ of degree less than $k-1$. By vertex-criticality, we have $\chi(\cH - v) < k$; let $\phi$ be a proper $(k-1)$-colouring of $\cH-v$. Adding back $v$ and its incident edges, we can extend $\phi$ to $v$ since there are at most $\deg(v) < k-1$ colour constraints for $v$, thus $\cH$ has chromatic number less than $k$, a contradiction.
\end{proof}

Using $d$-uniform hypergraphs of large girth and chromatic number, we construct graphs of large minimum degree and girth with total domatic number $1$.

\begin{prop}\label{prop:large-girth}
    For every $d\ge 2$ and $g\ge 3$, there exists a bipartite graph $G$ of minimum degree $d$ and girth at least $g$ such that $\tdom(G) = 1$.
\end{prop}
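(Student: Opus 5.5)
Take the bipartite incidence graph of a suitable $d$-uniform hypergraph, as in Zelinka's construction. By \Cref{thm:hypergraph-girth-chromatic}, there is a $d$-uniform hypergraph $\cH_0$ of girth at least $g$ and chromatic number at least $d+1$. (We need $d+1\ge 3$, which holds since $d\ge 2$.)

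First pass to a vertex-critical subhypergraph. Delete vertices from $\cH_0$, together with all edges containing them, while the chromatic number stays at least $3$. More precisely, choose a minimal induced subhypergraph $\cH$ with $\chi(\cH)\ge 3$. Minimality makes $\cH$ vertex-critical with chromatic number exactly $k=3$.

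Induced subhypergraphs cannot create new Berge cycles, so $g(\cH)\ge g$. Every edge of $\cH$ still has exactly $d$ vertices, since edges are kept only when all their vertices survive. \Cref{lem:critical-hypergraph} then gives minimum vertex degree at least $k-1=2$.

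This is too weak: we want every vertex of degree at least $d$. So instead start from $\cH_0$ with chromatic number at least $d+1$ and take a minimal induced subhypergraph with chromatic number at least $d+1$. This $\cH$ is vertex-critical with $\chi(\cH)=d+1$. By \Cref{lem:critical-hypergraph}, every vertex then has degree at least $d$.

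Now let $G$ be the bipartite incidence graph of $\cH$.
\begin{itemize}
\item Each edge-vertex of $G$ has degree exactly $d$, by uniformity.
\item Each hypergraph vertex has degree at least $d$ in $G$.
\item Hence $\delta(G)=d$.
\item $G$ is bipartite, and every cycle of length $2\ell$ in $G$ corresponds to a Berge cycle of length $\ell$ in $\cH$. So $G$ has girth at least $2g\ge g$.
\end{itemize}

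Finally, $\tdom(G)=1$. Suppose $V(G)$ were partitioned into two total dominating sets, giving a $2$-colouring $\phi$ of $V(G)$. Restrict $\phi$ to $V(\cH)$. Since $\chi(\cH)\ge d+1\ge 3$, some edge $e$ is monochromatic, say in colour $1$. Its open neighbourhood $N_G(e)$ is exactly $e$, so $e$ has no neighbour of colour $2$. Thus colour class $2$ is not totally dominating, a contradiction. Since $G$ has no isolated vertex, $V(G)$ itself is a total dominating set, so $\tdom(G)=1$ exactly.

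The only delicate step is securing minimum degree $d$ on the hypergraph side. Asking for chromatic number $d+1$ rather than $3$ and passing to a vertex-critical induced subhypergraph handles this. The rest is a routine translation between Berge cycles and cycles of the incidence graph.
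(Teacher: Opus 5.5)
Your proof is correct and follows the paper's argument: take a $d$-uniform hypergraph of girth at least $g$ and chromatic number at least $d+1$, and pass to a vertex-critical induced subhypergraph so that \Cref{lem:critical-hypergraph} gives minimum degree at least $d$. Then, since every $2$-colouring produces a monochromatic edge, the incidence graph has $\tdom$ equal to $1$. The abandoned first attempt with chromatic number $3$ is unnecessary and can simply be deleted from a final write-up.
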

\begin{proof}
    We take a $d$-uniform hypergraph of girth $g$ and chromatic number at least $d+1$ as in the statement of \Cref{thm:hypergraph-girth-chromatic}, and we extract a vertex-critical sub-hypergraph $\cH$ with chromatic number $d+1$. 
    Clearly, $\cH$ also has girth at least $g$.
    Let $G$ be the bipartite incidence graph of $\cH$.
    Since $\cH$ has edge size and minimum degree both at least $d$, $G$ has minimum degree $d$ and girth at least $2g$.
    We claim that $\tdom(G) = 1$.
    Indeed, attempting to colour $V(\cH)$ with $2$ colours results in at least one monochromatic edge $e$, thus $N_G(e)$ only contains one colour, and $e$ is not dominated by the other colour.
\end{proof}

\begin{rk}
    It is easy to modify the previous proof so as to have $\dom(G) = 2$ instead of $\tdom(G)=1$.
\end{rk}

In the following proposition, we show that the $\dom$-binding function for star-free graphs in \Cref{thm:main} is optimal up to a multiplicative factor.

\begin{prop}\label{prop:alpha2}
    There exist graphs of independence number $2$ with arbitrarily large minimum degree $\delta$, and domatic number at most $(1 + o(1))\frac{\delta}{\log\delta}$.
\end{prop}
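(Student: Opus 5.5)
The plan is to give a random construction built from two cliques, so that the independence number is automatically at most $2$. Let $A$ be a clique on $m$ vertices and $B$ a clique on $b$ vertices, with $A\cap B=\emptyset$. Add every edge between $a\in A$ and $\beta\in B$ independently with probability $p$. Any three vertices include two from the same clique, so $\alpha(G)\le 2$.

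\textbf{Parameters.} Fix $\delta$ large, let $p=p(\delta)\to 0$ slowly (say $p=1/\log\log\delta$), and set
\[
b=\frac{\delta}{\log^2\delta},\qquad m=\frac{\delta-b}{p}.
\]
The two degree computations are as follows.
\begin{itemize}
\item A vertex $\beta\in B$ has degree $b-1+|N_A(\beta)|$, where $|N_A(\beta)|\sim\mathrm{Bin}(m,p)$ has mean $pm=\delta-b$. By \Cref{thm:Chernoff} and a union bound over the $b$ vertices of $B$, with high probability every $\beta$ has $|N_A(\beta)|\ge(1-o(1))pm$. Hence the minimum degree over $B$ is $(1-o(1))\delta$.
\item A vertex of $A$ has degree at least $m-1\gg\delta$.
\end{itemize}
So $\delta(G)=(1-o(1))\delta$. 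The small error can be absorbed, either by renaming $\delta$ or by deleting a few edges to make the degrees of $B$ exactly right.

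\textbf{Upper bound on $\dom(G)$.} Take any domatic partition. At most $b$ parts meet $B$, and $b=o(\delta/\log\delta)$. Every remaining part is a subset $S\subseteq A$ that dominates $B$, meaning $S\cap N_A(\beta)\neq\emptyset$ for every $\beta\in B$. It therefore suffices to show that, with high probability, no $S\subseteq A$ of size $s\le s_0:=(1-\varepsilon)\ln b/p$ dominates $B$. Then the parts inside $A$ are disjoint and each has size at least $s_0$, so there are at most
\[
\frac{m}{s_0}=(1+O(\varepsilon))\frac{pm}{\ln b}=(1+o(1))\frac{\delta}{\ln\delta}
\]
of them, because $\ln b=(1-o(1))\ln\delta$. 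Letting $\varepsilon\to0$ slowly then gives the bound $(1+o(1))\delta/\log\delta$.

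\textbf{The main step: the first-moment bound.} Fix $S$ with $|S|=s\le s_0$. For each $\beta\in B$, the events $\{S\cap N(\beta)\neq\emptyset\}$ are independent across $\beta$ and each has probability $1-(1-p)^s$. Since $p\to0$, we have $(1-p)^s\ge e^{-(1+o(1))ps}\ge b^{-(1-\varepsilon/2)}$. Therefore
\[
\Pr[S \text{ dominates } B]\le \bigl(1-b^{-(1-\varepsilon/2)}\bigr)^b\le \exp\bigl(-b^{\varepsilon/2}\bigr).
\]
The number of such sets is at most $\sum_{s\le s_0}m^s\le \exp\bigl(s_0\ln m+O(1)\bigr)$. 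Here $s_0\ln m=O(\log^2\delta\cdot\log\log\delta)$, which is negligible against $b^{\varepsilon/2}=\delta^{\Omega(\varepsilon)}$ as long as $\varepsilon\gg\log\log\delta/\log\delta$. So a union bound succeeds.

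The obstacle is balancing the parameters. We need $p\to0$ so that $-\ln(1-p)\sim p$, which gives the sharp constant $1$. We also need $b$ to be both $o(\delta/\log\delta)$ and $\delta^{1-o(1)}$, so that $\ln b\sim\ln\delta$. The choice $b=\delta/\log^2\delta$ with a slowly vanishing $\varepsilon$ satisfies all of these at once.
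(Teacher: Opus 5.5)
Your proposal is correct and follows essentially the same construction and argument as the paper: two cliques joined by independent random cross edges of vanishing density $p$, a small side of size about $\delta/\log^2\delta$, and a first-moment union bound showing that no subset of $A$ of size about $\ln|B|/p$ dominates $B$. The paper differs only in parameters ($p=1/\log n$) and avoids your slack $\varepsilon$ by using $1-x\ge e^{-x/(1-x)}$ with the threshold $\frac{1-p}{p}\log\frac{t}{(\log n)^4}$. The one point to make explicit is that your step $(1-p)^s\ge b^{-(1-\varepsilon/2)}$ requires $\varepsilon\gtrsim 2p$, not just $\varepsilon\to0$; for instance $\varepsilon=(\log\log\delta)^{-1/2}$ works and also satisfies $\varepsilon\gg\log\log\delta/\log\delta$.
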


\begin{proof}
    Let $n$ be a large integer, $t \coloneqq \ceil{n/(\log n)^3 + 1}$, and $p \coloneqq 1/\log n$. Let $G$ be constructed as follows: The set of vertices of $G$ is $V(G) = A \sqcup B$, where $A = \{u_1 ,\dots, u_n\}$ and $B = \{v_1,\dots,v_t\}$ both induce cliques in $G$; thus $G$ has independence number at most $2$. For every $(i,j) \in [n] \times [t]$, the edge $u_iv_j$ is added to $G$ independently with probability $p$.

    We claim that with nonzero probability, $G$ satisfies the following:
    \begin{itemize}
        \item $G$ has minimum degree $\delta \ge np = \frac{n}{\log n}$, hence $\delta \le n \le (1+o(1))\,\delta\log\delta$
        \item There is no dominating set $X \subseteq A$ of size less than $k \coloneqq \frac{1-p}{p}\log\frac{t}{(\log n)^4}  \sim (\log n)^2$.
    \end{itemize}

    We first examine the first desired property. Every vertex in $A$ has degree at least $n-1$. As for vertices in $B$, they have $t-1$ neighbours in $B$, and their degree in $A$ is determined by a binomial random variable of parameters $(n,p)$. By \Cref{thm:Chernoff}, the probability that $v \in B$ has less than $(1-1/(\log n)^2)\,np$ neighbours in $A$ is at most $e^{-\frac{np}{2(\log n)^4}} \le \frac{1}{n^2}$. By union bound, the probability that at least one vertex has degree at most $(1 - 1/(\log n)^2)\,np$ is at most $t/n^2 \le \frac{1}{n}$. Therefore, with high probability, every vertex in $B$ has degree at least $np - \frac{np}{(\log n)^2} + t-1 \ge np$.

    We now examine the second desired property. Let us fix any set $X \subseteq A$ of size $k$. Independently for any given vertex $v \in B$, the probability that $X$ does not dominate $v$ is $(1-p)^k \ge e^{-\frac{p}{1-p}k} = \frac{(\log n)^4}{t}$, where we used the inequality $1-x \ge e^{-\frac{x}{1-x}}$ which holds for all $x<1$. Therefore, the probability that $X$ dominates $B$ is at most $(1 - (1-p)^k)^t \le e^{-(\log n)^4}$. There are $\binom{n}{k} \le e^{k \log n}$ ways to choose such a set $X$, therefore by union bound, the probability that there exists a set $X \subseteq A$ of size $k$ which dominates $B$ is at most $e^{k \log n - (\log n)^4} \le \frac{1}{n}$.
    
    In conclusion, with nonzero probability, both desired properties hold for $G$; in that case, $G$ satisfies the statement of \Cref{prop:alpha2}. Indeed, in every domatic partition of $G$, at most $t$ dominating sets intersect $B$; the others (which must be contained in $A$) have size at least $k$, and thus $\dom(G) \le t + \frac{n}{k} \le (1+o(1))\frac{n}{(\log n)^2} \le (1 + o(1))\frac{\delta}{\log \delta}$.
\end{proof}

\section{Forbidding a disconnected induced subgraph}\label{sec:disconnected}
We first observe that $\forb(2K_2)$ is not DOM-bounded, since it contains the class of split graphs. 
We infer that $\forb(H)$ can be DOM-bounded only if $H = H_0 + rK_1$ for some connected subgraph $H_0$ and some nonnegative integer $r$. 
In this section, we show that $\forb(H)$ is DOM-bounded if and only if $\forb(H_0)$ is.
We will need the following lemma.

\begin{lemma}
\label{lem:domatic-partition-excluded-set}
    Let $G$ be a graph of minimum degree $\delta$ and let $Z \subseteq V(G)$. There is a partition of $V(G)\setminus Z$ into at least $k-|Z|$ dominating sets of $G$, where 
    \[ k \coloneqq \min \left\{\sqrt{\delta/2}, \dom(G\setminus Z) \right\}.\]
\end{lemma}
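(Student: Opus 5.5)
The plan is to start from an optimal domatic partition of $G\setminus Z$, sacrifice at most $|Z|$ of its parts, and use the sacrificed vertices to repair the domination of the vertices of $Z$.

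\textbf{Setup.} Write $s\coloneqq |Z|$. If $k-s\le 0$ there is nothing to prove, so assume $s<k\le\sqrt{\delta/2}$; in particular $k^2\le \delta/2$ and $s<k$. Take a domatic partition of $G\setminus Z$ and merge surplus parts so that there are exactly $\floor{k}$ parts $X_1,\dots,X_{\floor{k}}$ (or at least $k$ parts, up to rounding). Each part dominates $V(G)\setminus Z$ in $G$.

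Two facts drive the argument:
\begin{itemize}
\item adding vertices to a part preserves this property;
\item each $z\in Z$ has at least $\delta-s\ge \delta-k$ neighbours in $V(G)\setminus Z$.
\end{itemize}
The only defect is that some parts may miss $N(z)$ for some $z\in Z$.

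\textbf{Repair procedure.} Enumerate $Z=\{z_1,\dots,z_s\}$ and process the $z_i$ in order, maintaining a set $\mathcal S$ of sacrificed parts with $|\mathcal S|\le i-1$ and a partial assignment of some vertices of $\bigcup\mathcal S$ to non-sacrificed parts. Each step of handling $z_i$ assigns at most $k-s$ vertices, so before step $i$ at most $(i-1)(k-s)<k^2\le\delta/2$ vertices have been assigned.

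At step $i$, proceed as follows.
\begin{enumerate}
\item Among the neighbours of $z_i$ outside $Z$, at least $\delta-s-(i-1)(k-s)$ are still unassigned. Since $s+(s-1)(k-s)\le k^2\le \delta/2$, this is at least $\delta/2$.
\item These unassigned neighbours lie in at most $k$ parts, so some part $X_j$ contains at least $\delta/(2k)\ge k$ unassigned neighbours of $z_i$.
\item Add $X_j$ to $\mathcal S$ if it is not already there.
\item Some parts may already contain a neighbour of $z_i$, possibly through earlier assignments. For each non-sacrificed part that does not, assign to it a distinct unassigned neighbour of $z_i$ from $X_j$. This needs at most $k-s$ vertices, and at least $k$ are available.
\end{enumerate}

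\textbf{Conclusion.} After all steps, $|\mathcal S|\le s$. Form the final parts as follows:
\begin{itemize}
\item each surviving part is $X_\ell$ together with the vertices assigned to it;
\item the unassigned vertices of sacrificed parts are thrown into any surviving part.
\end{itemize}
This partitions $V(G)\setminus Z$ into at least $k-s$ sets. Each still dominates $V(G)\setminus Z$, since it contains an original part. Each also dominates every $z\in Z$, by construction. Later steps only add vertices, so domination of earlier $z_i$ is never destroyed.

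\textbf{Main obstacle.} The delicate point is the counting that guarantees enough fresh neighbours of $z_i$ inside a single part at every step despite earlier assignments. This is exactly where the hypothesis $k\le\sqrt{\delta/2}$ enters, through $k^2\le\delta/2$ and $\delta/(2k)\ge k$. Rounding of $k$ needs minor care but does not affect the argument.
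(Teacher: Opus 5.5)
Your proposal is correct and is essentially the paper's argument: from $k$ classes of a domatic partition of $G\setminus Z$, pigeonhole gives each $z_i$ a single class containing at least $\delta/(2k)\ge k$ still-unused neighbours of $z_i$, the at most $|Z|$ classes so chosen are sacrificed, and each surviving class receives one of these neighbours (the paper first reserves the pairwise disjoint blocks $S(z_i)$ and only then distributes, which avoids your interleaved bookkeeping). There is one harmless slip: at step $i$ the number of non-sacrificed parts, and hence of vertices you may need to assign, can be as large as $k-1$ rather than $k-s$, and vertices assigned to a part that is sacrificed later must also be placed in a surviving part at the end; the count still closes, because $s+(s-1)(k-1)\le (k-1)^2<\delta/2$.
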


\begin{proof}
    Let us write $Z = \{z_1, \ldots, z_t\}$ with $t\leq k$, otherwise there is nothing to prove.
    Let $D_1, \ldots, D_k$ be a domatic partition of $G\setminus Z$.
    We claim that each $z_i\in Z$ has a set $S(z_i)$ of private neighbours such that
    \begin{itemize}
        \item $|S(z_i)| = \floor{\frac{\delta}{2k}}$, 
        \item there exists $j\in [k]$ such that $S(z_i) \subseteq D_j$, and 
        \item the sets $\{S(z_i)\}_{i\in [t]}$ are pairwise disjoint. 
    \end{itemize}
    Those can be constructed iteratively; the set $S(z_i)$ is selected from the colour class $D_j$ which has the largest intersection with $N(z_i) \setminus \bigcup_{i'<i} S(z_{i'})$. 
    By the Pigeonhole Principle, that intersection has size at least $\frac{1}{k}\pth{\deg(z_i) - (i-1)\frac{\delta}{2k}} \ge \frac{1}{k}\pth{\delta - (t-1) \frac{\delta}{2k}} \ge \frac{\delta}{2k}$. 
    Note that, by definition of $k$, $\frac{\delta}{2k} \ge k$.
    Without loss of generality, let us assume that $\bigcup_{z\in Z} S(z) \subseteq \bigcup_{j=k-t+1}^t D_j$.
    For each $j\in [k-t]$ we construct $D'_j$ from $D_j$ by adding one private vertex from each $S(z_i)$ which is possible since $|S(z_i)| > k-t$.
    By construction, each $D'_j$ is a dominating set of $G$, disjoint from the others. 
    So $(D'_j)_{j\in [k-t]}$ is the desired domatic partition of $G$ up to assigning the remaining vertices arbitrarily.  
\end{proof}

\begin{thm} \label{thm:disconnected:full}
Let $r$ be a nonnegative integer and let $\delta$ be a positive integer.
Let $H_0$ be a graph such that $\forb(H_0)$ is DOM-bounded, with DOM-binding function $f_0$, and let $H$ be obtained by adding to $H_0$ a disjoint independent set on $r$ vertices. Let $t\coloneqq |V(H)|$.
For every $H$-free graph $G$ of minimum degree $\delta$, one has
\[ \dom(G) \ge  \floor{\log_2 \frac{\min \big\{f_0(\delta/2), \sqrt{\delta}/2\big\}}{2t}}.\]
\end{thm}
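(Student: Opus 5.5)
We argue by induction on $r$, using \Cref{lem:domatic-partition-excluded-set} to pass from $H_0+(r-1)K_1$-free graphs to $H$-free graphs. The $\log_2$ in the bound suggests that each added isolated vertex costs roughly a halving of the available number of dominating sets. Write $m \coloneqq \min\{f_0(\delta/2), \sqrt{\delta}/2\}$. The statement to prove by induction is that an $(H_0+rK_1)$-free graph of minimum degree $\delta$ has domatic number at least roughly $m/2^{r}$ minus a correction of order $t$. One then checks that the weaker bound $\floor{\log_2(m/2t)}$ follows; if the induction only gives something like $m/(2t)^{\,r}$, the logarithmic form still absorbs it once $r\le t$. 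In fact, I expect the logarithm is there precisely because the natural induction loses a multiplicative factor at each step. I would therefore aim directly for a statement of the form ``$\dom(G)\ge j$ whenever $m\ge 2t\cdot 2^{j}$'' and induct on $j$.

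\textbf{Key case analysis.} Let $G$ be $H$-free with minimum degree $\delta$. If $G$ is $H_0$-free, we get $\dom(G)\ge f_0(\delta)\ge m$ and we are done. Otherwise, fix an induced copy of $H_0$ with vertex set $X$, where $|X|\le t$. Consider $Z\coloneqq N[X]$ complemented: let $W\coloneqq V(G)\setminus N[X]$. The graph $G[W]$ is $(H_0+(r-1)K_1)$-free, since otherwise adding $X$ would give an induced $H$. However, $W$ may be small, or $G[W]$ may have small minimum degree, so this split must be done carefully.

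\textbf{Handling the split.} A cleaner route is to note that $r$ vertices pairwise non-adjacent and anticomplete to $X$ would give $H$. Hence $G[W]$ has independence number less than $r$, and by Ramsey-type reasoning it is close to being covered by few cliques. Alternatively, and this is what I would try first, handle a high-degree set and a low-degree set separately. Vertices $v$ with many neighbours in $N[X]$ (at least $\delta/2$) are dominated easily by partitioning $N[X]$. The remaining vertices have at least $\delta/2$ neighbours outside $N[X]$, and we apply the induction hypothesis (at minimum degree $\delta/2$, which explains the $f_0(\delta/2)$) to a suitable induced subgraph. \Cref{lem:domatic-partition-excluded-set} is then used to repair the domination of the few vertices in an excluded set $Z$ of size at most $t$. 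This repair costs $|Z|\le t$ classes, with $k\ge\sqrt{\delta/2}$ ensuring feasibility.

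\textbf{Main obstacle.} The main obstacle is combining two partial domatic partitions into a single one. The first dominates the vertices with many neighbours in $N[X]$, and the second dominates the rest. They must be combined on disjoint vertex sets without losing more than a factor of $2$ per induction step. I would merge class $i$ of one partition with class $i$ of the other, taking the minimum of the two counts. I expect the factor-$2$ losses and the additive $t$ losses to compound into the stated $\log_2(m/2t)$.
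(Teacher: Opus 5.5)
There is a genuine gap: the proposal never supplies a step that actually produces dominating sets.

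\textbf{The unjustified steps.} The claim that vertices with at least $\delta/2$ neighbours in $N[X]$ ``are dominated easily by partitioning $N[X]$'' asks for a partition of $N[X]$ in which each such vertex sees every class. That is a panchromatic colouring of a hypergraph with large edges, which is the original problem in disguise. Zelinka's construction shows that large neighbourhoods alone do not even guarantee two such classes, and you use no $H$-freeness to get around this.

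The ``suitable induced subgraph'' for your induction is also never identified. $G[W]$ may contain vertices with almost all their neighbours in $N[X]$, while vertices of $N(X)$ with few neighbours in $N[X]$ lie outside $W$. So no natural choice has minimum degree $\delta/2$.

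Your accounting of the $\log_2$ is also off. It does not come from a loss per isolated vertex of $H$; the bound depends on $r$ only through $t$. Moreover, a bound like $m/(2t)^r$ is not absorbed by $\floor{\log_2(m/2t)}$ for moderate $m$: for example, $t=5$, $r=2$, $m=160$ gives $1.6<4$.

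\textbf{The missing idea.} It is hidden in your own observation that $\alpha(G[W])<r$. Rather than Ramsey or clique covers, use that a maximal independent set $I$ of $G[W]$ dominates $W$ and has fewer than $r$ vertices. Then $X\cup I$ is a dominating set of $G$ of size less than $t$.

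The paper peels off such sets greedily. Let $Z$ be everything removed so far and $G_i=G\setminus Z$. Take an induced $H_0$ on $X_i$ and such an $I_i$ in $G_i$, then add one neighbour outside $Z$ of each $z\in Z$, so that $D_i$ dominates all of $G$. Hence $|D_i|\le t+|Z|\le 2^it$; this doubling is the real source of the $\log_2$.

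The process stops after $k$ rounds, or earlier when $G_i$ is $H_0$-free. In the latter case $|Z|<\delta/2$, so $\delta(G_i)\ge\delta/2$, which is where $f_0(\delta/2)$ comes from. Then \Cref{lem:domatic-partition-excluded-set}, applied to $G$ and the whole removed set $Z$ (not a set of size $t$), yields at least $\min\{f_0(\delta/2),\sqrt{\delta}/2\}-|Z|$ further dominating sets of $G$ inside $V(G)\setminus Z$.
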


\begin{proof}
    Let $k\coloneqq \floor{\log_2 \frac{\min \big\{f_0(\delta/2), \sqrt{\delta}/2\big\}}{2t}}$.
    
    \noindent
    We construct a domatic partition of $G$ of size at least $k$ with the following procedure. 
    \begin{enumerate}
        \item Initially set $G_0\gets G$, $Z\gets \emptyset$, and $i\gets 0$.
        \item While $i<k$ and $G_i$ is not $H_0$-free, let $X_i$ be such that $G_i[X_i]$ is isomorphic to $H_0$. Let $I_i$ be a maximal independent set of $G_i \setminus N[X_i]$, and initially set $D_i \gets X_i \cup I_i$.
        For every $z\in Z$, choose a vertex $y_z\in N(z)\setminus Z$ and add $y_z$ to $D_i$. 
        Set $G_{i+1} \gets G_i \setminus D_i$, $Z\gets Z\cup D_i$, and $i\gets i+1$. 
        \item Return the domatic partition formed by $D_0, \ldots, D_{i-1}$ together with that of $G_i = G \setminus Z$ promised by \Cref{lem:domatic-partition-excluded-set} applied to $G$ and $Z$.
    \end{enumerate}

    \begin{claim}
        For every nonnegative integer $i$, at the end of iteration $i$ of the while loop, $D_i$ is a dominating set of $G$ of size at most $2^i t$. 
    \end{claim}

    \begin{proofclaim}
        Since $G$ (and so, in particular, also $G_i$) is $H$-free and $G_i[X_i]\cong H_0$, we infer that $G_i\setminus N[X_i]$ is $rK_1$-free; said otherwise $\alpha(G_i\setminus N[X_i])<r$. So $|X_i \cup I_i|<t$. $X_i$ dominates $N[X_i]$, and by maximality $I_i$ dominates $G_i\setminus N[X_i]$, so $X_i \cup I_i$ dominates $G_i$.
        This settles the proof when $i=0$. 
        When $i \ge 1$, we may assume by induction that, for each $j<i$, $|D_j| \le 2^jt$. At the beginning of the iteration, we have $Z = \bigcup_{j<i}D_j$, so $Z \le \sum_{j<i} 2^jt = (2^i-1)t$. Note that, by the assumption that $i< k \le \log_2 \frac{\sqrt{\delta}}{4t}$, one has $|Z|<\delta$, which ensures that $N(z)\setminus Z \neq \emptyset$ for all $z\in Z$. 
        Each $z\in Z$ adds at most one new vertex into $D_i$, so at the end of the iteration $|D_i|\le |X_i \cup I_i| + |Z| \le 2^it$, as desired.
        By construction, each $z\in Z$ has a neighbour in $D_i$, and $D_i$ already dominates $V(G_i) = V(G)\setminus Z$, so $D_i$ is a dominating set of $G$.
    \end{proofclaim}

    Let $i$ be the total number of iterations of the while loop. If $i=k$ then we are done, so we assume that $i<k$, which implies that $G_i$ is $H_0$-free. 
    We have $|Z| \le (2^i-1)t < \frac{\min \big\{f_0(\delta/2), \sqrt{\delta}/2\big\}}{2} < \frac{\delta}{2}$, and so $\delta(G_i) \ge \delta - |Z| \ge \delta/2$. 
    The returned domatic partition has size at least
    \begin{align*}
        i + \min \left\{f_0(\delta/2), \sqrt{\delta}/2 \right\} - |Z| \ge \frac{1}{2} \min \left\{f_0(\delta/2), \sqrt{\delta}/2 \right\} > k.
    \end{align*}
    This ends the proof.
\end{proof}

\Cref{thm:disconnected} is a corollary of \Cref{thm:disconnected:full}.

\section{Line graphs of rank-\texorpdfstring{$k$}{k} hypergraphs}\label{sec:linegraphs}

A hypergraph has rank $k$ if every edge has size at most $k$. The line graph of a hypergraph is the intersection graph of its edges. Line graphs of rank-$k$ hypergraphs are in particular induced-$K_{1,k+1}$-free.
Indeed, if an edge $e$ of the hypergraph intersects $k+1$ pairwise disjoint edges, then two of these intersections must occur at the same vertex of $e$, forcing the corresponding two neighbours of $e$ in the line graph to be adjacent.

In this section, we show that line graphs of rank-$k$ hypergraphs are linearly DOM-bounded. We shall first use the following result due to Lovász \cite{lovasz1970generalization} concerning the panchromatic number of hypergraphs.

\begin{thm}[Lovász, 1970]\label{thm:panchromatic}
	Let $\mathcal{H} = (V,\mathcal{E})$ be a hypergraph. If there exists an integer $\beta \ge 1$ such that every nonempty subset $\mathcal{F} \subseteq \mathcal{E}$ verifies $$\card{\UF} > (\beta - 1)|\mathcal{F}|,$$
	then $\mathcal{H}$ has panchromatic number at least $\beta$.
\end{thm}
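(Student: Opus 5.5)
The plan is to reduce the colouring problem to a matching problem via Hall's theorem, and then colour along a BFS-type forest whose existence is forced by the \emph{strict} inequality in the hypothesis.

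\emph{Step 1: a $(\beta-1)$-fold system of representatives.} Form a bipartite graph $B$ with one side $V$. On the other side, put $\beta-1$ copies of each edge $e\in\mathcal{E}$, each copy adjacent to the vertices of $e$. Take a set $S$ of copies, and let $\mathcal{F}$ be the set of edges having a copy in $S$. Then $|S|\le(\beta-1)|\mathcal{F}|$ and $N_B(S)=\UF$. The hypothesis therefore gives $|N_B(S)|\ge|S|$, so Hall's theorem yields a matching saturating all copies. Equivalently, each edge $e$ receives a set $R(e)\subseteq e$ of exactly $\beta-1$ vertices, with the sets $R(e)$ pairwise disjoint. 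Call a vertex \emph{unmatched} if it lies in no $R(e)$.

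\emph{Step 2: reachability of unmatched vertices.} Define a digraph on $\mathcal{E}$ with an arc $e\to e'$ whenever $e$ contains a vertex of $R(e')$ and $e'\neq e$. I will show that from every edge one can reach an edge containing an unmatched vertex. Suppose not, and let $\mathcal{F}$ be the nonempty set of edges from which no such edge is reachable. Then every vertex of $\UF$ is matched. It is matched to an edge reachable from $\mathcal{F}$, and that edge must itself lie in $\mathcal{F}$. Hence $\UF=\bigcup_{e\in\mathcal{F}}R(e)$, so $|\UF|=(\beta-1)|\mathcal{F}|$, contradicting the strict inequality. This is the step where strictness is essential, and I expect it to be the main point requiring care.

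\emph{Step 3: colouring.} Let $d(e)$ be the distance in this digraph from $e$ to the set of edges containing an unmatched vertex. For each edge $e$, choose a pointer vertex $p(e)\in e$ as follows. If $d(e)=0$, let $p(e)$ be an unmatched vertex of $e$. Otherwise, let $p(e)$ be a vertex of $e$ lying in $R(e')$ for some $e'$ with $d(e')=d(e)-1$. In both cases $p(e)\notin R(e)$.

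Now colour as follows. First give every unmatched vertex colour $1$. Then process the edges in increasing order of $d$; ties may be broken arbitrarily, since $R$-sets are disjoint. When $e$ is processed, the colour of $p(e)$ is already fixed: either $p(e)$ is unmatched, or it lies in $R(e')$ for an edge $e'$ processed earlier. Colour the $\beta-1$ vertices of $R(e)$ bijectively with the $\beta-1$ colours of $[\beta]$ other than the colour of $p(e)$. Each vertex is coloured exactly once, because the $R(e)$ are disjoint and disjoint from the unmatched set.

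Every edge $e$ then contains $R(e)\cup\{p(e)\}$, which sees all $\beta$ colours. This yields a panchromatic $\beta$-colouring, as required.
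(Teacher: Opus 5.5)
Your proof is correct. Step~1 only needs the non-strict inequality $|\bigcup\mathcal{F}|\ge(\beta-1)|\mathcal{F}|$ to verify Hall's condition. In Step~2, the set $\mathcal{F}$ is closed under out-arcs and contains no unmatched vertex, so $\bigcup\mathcal{F}=\bigcup_{e\in\mathcal{F}}R(e)$, which contradicts strictness exactly as you say. In Step~3, the first arc of a shortest path supplies $p(e)$ with $d(e')=d(e)-1$. Disjointness of the $R$-sets gives $p(e)\notin R(e)$, and every vertex is coloured exactly once, so $R(e)\cup\{p(e)\}$ carries all $\beta$ colours. The degenerate case $\beta=1$, where all vertices are unmatched and every edge is nonempty, is also covered.

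There is no in-paper proof to compare with. The paper quotes this result from Lov\'asz (1970) and uses it only as a black box in the proof of Corollary~\ref{cor:rank-k-linegraph-loose}. Your argument is therefore a self-contained, elementary proof that uses only Hall's theorem and a reachability argument. It also shows that strictness is needed in just one place: guaranteeing that every edge can reach an edge containing an unmatched vertex. The proof is constructive, since it is a bipartite matching followed by a breadth-first search in the auxiliary digraph. It thus gives a polynomial-time algorithm for the panchromatic colouring, and hence for the dominating colouring built in Corollary~\ref{cor:rank-k-linegraph-loose}. Finally, it works verbatim when $\mathcal{E}$ is a multiset of edges.
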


\Cref{thm:panchromatic} suffices to show that the class of line graphs of rank-$k$ hypergraphs is linearly DOM-bounded.

\begin{cor}\label{cor:rank-k-linegraph-loose}
	Let $G$ be a line graph of some rank-$k$ hypergraph.
	Let $\delta$ be the minimum degree of $G$. Then \[\dom(G) \ge \frac{\delta}{k^2}.\]
\end{cor}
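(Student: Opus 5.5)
We apply \Cref{thm:panchromatic} to a hypergraph whose panchromatic colourings are exactly dominating colourings of $G$. Let $\cH_0$ be the rank-$k$ hypergraph with $G = L(\cH_0)$, so $V(G)=E(\cH_0)$. Define the closed-neighbourhood hypergraph $\cH$ on vertex set $V(G)$, with one hyperedge $N_G[e]$ for each $e\in V(G)$. A colouring of $V(G)$ is dominating exactly when every hyperedge $N_G[e]$ receives all colours, that is, when it is panchromatic for $\cH$. So $\dom(G)$ is at least the panchromatic number of $\cH$, and it suffices to check the hypothesis of \Cref{thm:panchromatic} with $\beta \coloneqq \floor{\delta/k^2}$. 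We may assume $\beta\ge 1$, since otherwise $\dom(G)\ge 1\ge \delta/k^2$ holds trivially, or $\delta<k^2$ and the bound is at most $1$.

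Fix a nonempty set $F\subseteq V(G)$ of hyperedges $e$ of $\cH_0$, and let $\cF$ be the corresponding family $\{N_G[e] : e\in F\}$. We must show $\card{\UF} > (\beta-1)|F|$. The idea is a double count of incidences between $F$ and $\UF$. Each $e\in F$ contributes $|N_G[e]|\ge \delta+1$ elements. Conversely, a fixed $f\in \UF$ belongs to $N_G[e]$ only if $e$ meets $f$ in some vertex $v$ of $\cH_0$. This counting fails in general, because $f$ might meet many members of $F$, for instance all edges through a high-degree vertex of $\cH_0$.

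The main obstacle is exactly this, and the fix is vertex-based clustering. For each vertex $v$ of $\cH_0$, the edges containing $v$ form a clique $C_v$ in $G$. For each $e\in F$, its $\ge\delta$ neighbours are spread over at most $k$ cliques $C_v$ with $v\in e$, so some vertex $v_e\in e$ has $|C_{v_e}|\ge \delta/k+1$. Let $U \coloneqq \{v_e : e\in F\}$. Each $v\in U$ is the chosen vertex of at most $|C_v|$ members of $F$, so $|F|\le \sum_{v\in U} |C_v|$. Every element $f\in \bigcup_{v\in U} C_v \subseteq \UF$ has at most $k$ vertices, so it lies in at most $k$ of these cliques. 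Hence
\[
\card{\UF} \ge \frac{1}{k}\sum_{v\in U}|C_v| .
\]

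This only gives $\card{\UF}\ge |F|/k$, which is too weak, so I refine the count. The key point is that $|C_v|\ge \delta/k$ and $C_v\subseteq \UF$ whenever $v=v_e$ for some $e\in F$. Therefore
\[
\card{\UF}\ \ge\ \frac1k\sum_{v\in U}|C_v|\ \ge\ \frac{\delta}{k^2}\,|U| .
\]
This compares $\UF$ to $|U|$ rather than to $|F|$, so I still need to control $|F|$ in terms of $|U|$. Members of $F$ sharing the same $v_e$ all lie in $C_{v_e}$, so on its own this only gives $|F|\le \sum_{v\in U}|C_v|\le k\card{\UF}$, which is insufficient. I would handle it as follows. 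The $F$-edges through $v$ lie in $C_v$, so if $v$ carries $m_v$ edges of $F$, then $C_v$ contributes at least $\max(m_v,\delta/k)$ elements to $\UF$. Summing over $U$ with multiplicity at most $k$ gives
\[
\card{\UF}\ge \frac1k\sum_{v\in U}\max(m_v,\delta/k).
\]
When $\delta/k^2\le 1$ there is nothing to prove. Otherwise the hope is $\card{\UF} \ge \frac1k\sum_v m_v = |F|/k$, paired with the $\delta/k^2$ estimate, yielding $\card{\UF}>(\beta-1)|F|$. Getting this bound cleanly, by balancing the two lower bounds, is the step I expect to need the most care, and the exact constant may need slack.
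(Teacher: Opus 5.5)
\textbf{There is a genuine gap, and it cannot be closed within your approach.} The step you flag as needing care is impossible, because the inequality you are trying to prove is false. So \Cref{thm:panchromatic} cannot be applied to the closed-neighbourhood hypergraph at all.

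Take $F=V(G)$. Then $\UF=V(G)$, so $\card{\UF}=|F|$, and $\card{\UF}>(\beta-1)|F|$ forces $\beta<2$. Discarding repeated hyperedges does not help. Whenever the closed neighbourhoods of $G$ are pairwise distinct, the full family consists of $|V(G)|$ distinct sets whose union has $|V(G)|$ elements. For example, with $G=L(K_{m,m})$, $k=2$ and $\delta=2m-2$, the hypothesis fails for every $\beta\ge2$, even though $\delta/k^2=(m-1)/2$ is unbounded. In general, applying Lovász's criterion to the whole edge set requires fewer than $|V|/(\beta-1)$ hyperedges, while a closed-neighbourhood hypergraph has essentially $|V|$ of them. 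No balancing of $|F|/k$ against $\frac{\delta}{k^2}|U|$ can overcome this: comparing with $|F|$ is exactly where it breaks.

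The missing idea is to change the auxiliary hypergraph, not the count. Let $V^+$ be the set of vertices $v$ of $\cH_0$ with $|C_v|\ge\delta/k$. Let $\cH'$ have vertex set $V(G)$ and hyperedges $C_v$ for $v\in V^+$.
\begin{itemize}
\item Your pigeonhole argument producing $v_e$ shows that every $e$ contains a vertex of $V^+$. Since $C_{v_e}\subseteq N_G[e]$, any panchromatic colouring of $\cH'$ is a dominating colouring of $G$.
\item A subfamily of $\cH'$ is indexed by an arbitrary nonempty $U\subseteq V^+$. The estimate you already derived, $\card{\bigcup_{v\in U}C_v}\ge\frac1k\sum_{v\in U}|C_v|\ge\frac{\delta}{k^2}|U|$, is precisely the hypothesis of \Cref{thm:panchromatic} for $\cH'$. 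There is then no need to relate $|U|$ to $|F|$.
\end{itemize}
This is the paper's proof.

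Finally, take $\beta=\ceil{\delta/k^2}$ rather than $\floor{\delta/k^2}$. Since $\delta/k^2>\ceil{\delta/k^2}-1$, the strict inequality still holds. This gives $\dom(G)\ge\delta/k^2$ as stated, rather than only $\floor{\delta/k^2}$.
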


\begin{proof}
	Let $H$ be a rank-$k$ hypergraph such that $G$ is the line graph of $H$. For each $v \in V(H)$, we denote $E_v$ the set of edges of $H$ incident to $v$.
	Let $V^+ \coloneqq \sst{v \in V(H)}{|E_v|\ge \delta/k}$; clearly, $V^+$ is a transversal of $H$. Consider the hypergraph $\mathcal{H}$ with vertex-set $\cE(H)$ and hyperedge-set $\cE(\cH) \coloneqq \sst{E_v}{v \in V^+}$. Observe that each edge $e \in \cE(H)$ is contained in $|e|\le k$ distinct sets of the form $E_v$, one for each vertex of $e$. 
	
	Let $\mathcal{F}$ be a non-empty subset of $\cE(\cH)$. By double counting, we have $$\frac{\delta}{k}|\mathcal{F}| \le \sum_{F\in \mathcal{F}}|F| = \sum_{e \in \UF} |\{F\in \mathcal{F} : e\in F\}| \le k \card{\UF},$$
	
	hence $\card{\UF} \ge \frac{\delta}{k^2}|\mathcal{F}|$ for all non-empty subsets $\mathcal{F} \subseteq \cE(\cH)$. Thus, by \Cref{thm:panchromatic}, $\mathcal{H}$ has panchromatic number at least $\frac{\delta}{k^2}$. Consider any panchromatic $\frac{\delta}{k^2}$-colouring of $\mathcal{H}$, and observe that this same colouring of $\cE(H)$ corresponds to a dominating colouring in $G$: indeed, for every edge $e \in \cE(H)$, $e$ is incident to some vertex $v \in V^+$, and thus is incident to all edges in the set $E_v$, which contains every colour. Therefore $\dom(G) \ge \delta / k^2$.
\end{proof}

In the rest of this subsection, we show that \Cref{cor:rank-k-linegraph-loose} can be quantitatively improved, starting with line graphs of simple graphs ($2$-uniform hypergraphs).

\subsection{Line graphs of simple graphs}

A \emph{cover decomposition} of a graph is a partition of its edges into edge covers (a subset of edges which covers all vertices). The \emph{cover index} of a graph $G$ is the maximum size of a cover decomposition of $G$. Clearly, a graph of minimum degree $\delta$ has cover index at most $\delta$. In his thesis \cite{gupta1967studies} (see also \cite{gupta1974decompositions}), Gupta proved that simple graphs of minimum degree $\delta$ have cover index at least $\delta-1$.

\begin{thm}[Gupta, 1967]\label{thm:gupta}
	Let $G$ be a simple graph. Then $G$ has a cover decomposition of size $\delta(G)-1$.
\end{thm}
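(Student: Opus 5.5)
We will use the Kempe-chain and fan method familiar from Vizing's theorem, with the roles of ``missing'' and ``repeated'' colours exchanged. Set $k \coloneqq \delta(G)-1$ and assume $k\ge 1$; otherwise there is nothing to prove. A cover decomposition of size $k$ is the same thing as an edge-colouring $c\colon E(G)\to[k]$ in which every vertex sees every colour. For a colouring $c$ and a vertex $v$, let $m_c(v)$ be the number of distinct colours on edges at $v$. Among all $k$-edge-colourings, choose $c$ maximising $\Phi(c)\coloneqq\sum_v m_c(v)$. Suppose, for a contradiction, that some vertex $v$ misses a colour $\alpha$. Since $\deg(v)\ge k+1$ and at most $k-1$ colours occur at $v$, some colour $\beta$ occurs at least twice at $v$; in fact the total surplus of repetitions at $v$ is at least $2$.

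\textbf{Step 1 (Kempe chains).} Consider the subgraph $G_{\alpha\beta}$ formed by edges coloured $\alpha$ or $\beta$, and start walking from $v$ along a $\beta$-edge, alternating $\beta,\alpha,\beta,\dots$, as an Euler-type trail that stops only when it cannot continue. Then swap $\alpha$ and $\beta$ on this trail. Interior vertices of the trail keep both colours. The starting vertex $v$ gains $\alpha$ and keeps $\beta$, because $\beta$ was repeated at $v$. The only danger is the terminal vertex $w$: it could lose a colour that it saw exactly once. Choosing the trail to be maximal, $w$ is stuck exactly when it misses the next colour in the alternation, and then the swap does not decrease $m_c(w)$. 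Hence $\Phi$ strictly increases unless the trail closes up back at $v$. This gives the key structural lemma. \emph{In an optimal colouring, whenever $v$ misses $\alpha$ and $\beta$ is repeated at $v$, the maximal alternating trail from $v$ returns to $v$ through an $\alpha$-edge after starting with a $\beta$-edge.} That contradicts $v$ missing $\alpha$, unless the trail returns along a $\beta$-edge, which forces an odd closed trail through $v$. The same argument applies with every repeated colour in place of $\beta$.

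\textbf{Step 2 (fan argument, the main obstacle).} The odd-closed-trail case is where simplicity of $G$ must be used, just as in Vizing's theorem; for multigraphs the statement is false, and the bound drops to $\delta-\mu$. Here is what I would try. Let $vw_0$ be a $\beta$-edge at $v$, where $\beta$ is repeated at $v$. If $w_0$ also sees $\beta$ at least twice, or misses some colour repeated at $v$, then recolouring $vw_0$ directly does not decrease $\Phi$. It does not increase it either, but it changes which colours are repeated at $v$, and we use this freedom. Concretely, build a fan $w_0,w_1,\dots$ of distinct neighbours of $v$, where $vw_{i+1}$ has a colour $\gamma_i$ that is ``surplus-available'' at $w_i$, meaning recolouring $vw_i$ to $\gamma_i$ keeps $m(w_i)$ unchanged. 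Shifting colours along the fan, as in Vizing's rotation, preserves $\Phi$. Because $G$ is simple the $w_i$ are distinct, so the fan must terminate. At termination, either we can increase $\Phi$ directly, or we obtain a second optimal colouring in which $v$ still misses $\alpha$ but now the repeated colour at $v$ is some $\gamma$. In the latter case the $\alpha$/$\gamma$ trail from $v$ cannot close up at $v$, because of the parity and endpoint information coming from the fan. Step 1 then gives the contradiction.

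\textbf{Step 3 (verification and conclusion).} I expect the hardest bookkeeping to be the fan step: one must check, in the spirit of Vizing's proof, that the surplus of at least $2$ at $v$ and the degree bound $\deg(w_i)\ge k+1$ at each fan vertex always supply a valid next colour. One must also check that two fans, for two different repeated colours, cannot both close into odd trails through $v$. Once this is established, no optimal colouring can have a vertex missing a colour. Therefore every colour class is an edge cover, and $G$ has a cover decomposition of size $\delta(G)-1$.
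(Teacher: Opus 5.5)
Your proposal has a genuine gap: the fan step, which is where simplicity must be used, is only a plan, and its one concrete claim is false. Note that the paper does not prove this statement at all; it only cites Gupta. So the question is whether your argument stands on its own.

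\textbf{Step 1 is essentially right.} In a $\Phi$-maximal colouring, suppose $v$ misses $\alpha$. Then no colour occurs three times at $v$, because even a closed trail then gives a gain. Moreover, for each doubled $\beta$, every maximal $\beta\alpha$-trail from $v$ returns to $v$ along the other $\beta$-edge. (Your sentence about returning ``through an $\alpha$-edge'' describes something impossible, since $v$ has no $\alpha$-edge.)

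\textbf{Where Step 2 fails.} Suppose $c(vw_i)$ is doubled at $v$, as it is after each shift, and $v$ misses $\alpha$. Then optimality forces $c(vw_i)$ to occur exactly once at $w_i$, since otherwise recolouring $vw_i$ to $\alpha$ strictly increases $\Phi$. So your case ``$w_0$ sees $\beta$ twice'' is impossible, not neutral. It follows that a colour is ``surplus-available'' at $w_i$ exactly when it is missing at $w_i$. The bound $\deg(w_i)\ge k+1$ gives a repeated colour at $w_i$, not a missing one. Hence the fan stalls as soon as it reaches a neighbour that sees all $k$ colours.

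This is not an edge case. For $k=3$, Step 1 forces $\deg(v)=4$ with colours $\beta,\beta,\gamma,\gamma$ at $v$. Then every neighbour $w$ of $v$ sees all three colours. If $w$ missed $\alpha$, recolouring $vw$ to $\alpha$ would give a gain. If $w$ missed the other doubled colour, recolouring $vw$ to it would create a tripled colour at $v$, contradicting Step 1. So your fan has no first step, and nothing in the proposal rules this configuration out.

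\textbf{What is missing.} You need an argument at a full fan vertex $w$. One such move goes as follows. Take a colour $\gamma$ repeated at $w$; one checks it must be present at $v$. Recolour $vw$ from $\beta$ to $\gamma$. Swap the closed $\alpha\gamma$-trail at $v$ given by Step 1; this gives $v$ the colour $\alpha$ and is neutral at $w$. Then swap a maximal $\gamma\beta$-trail from $w$, where $\gamma$ is now tripled and $\beta$ is missing. The net result is a strict gain whenever $\gamma$ is doubled at $v$, which settles $k\le 3$. When $\gamma$ occurs only once at $v$, this move must be combined with the Vizing-type shifting along the fan while controlling how the trails interact.

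That combination is the real content of Gupta's theorem. So are your unproved assertions that the final $\alpha\gamma$-trail ``cannot close up because of parity and endpoint information'' and that ``two fans cannot both close.'' As it stands, the proposal does not establish the statement.
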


Using this result on cover decompositions, we improve the lower bound $\dom(G) \ge \ceil{\delta(G)/4}$ from \Cref{cor:rank-k-linegraph-loose} to $\dom(G) \ge \ceil{\delta(G)/2} - 1$, when considering line graphs of simple graphs. 

\begin{thm}\label{thm:linegraph-lower}
	Let $G$ be the line graph of a simple graph, and $\delta \coloneqq \delta(G)$. Then \[\dom(G) \ge \lceil \delta/2\rceil -1.\]
\end{thm}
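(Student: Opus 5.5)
Let $G = L(H)$ for a simple graph $H$, and set $\delta \coloneqq \delta(G)$. For an edge $uv$ of $H$, its degree in $G$ is $\deg_H(u)+\deg_H(v)-2$. Hence $\deg_H(u)+\deg_H(v) \ge \delta+2$ for every edge $uv$, so at least one endpoint of every edge has $H$-degree at least $\lceil \delta/2\rceil + 1$.

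As in \Cref{cor:rank-k-linegraph-loose}, let $V^+ \coloneqq \sst{v\in V(H)}{\deg_H(v) \ge \lceil\delta/2\rceil+1}$. By the previous paragraph, $V^+$ is a vertex cover of $H$. We want to colour $E(H)$ with $\lceil\delta/2\rceil - 1$ colours so that every $v\in V^+$ sees all colours on its incident edges. Such a colouring is dominating in $G$: every edge $e$ of $H$ has an endpoint $v\in V^+$, and $e$ is adjacent in $G$ to all edges at $v$ (or is one of them), so $N_G[e]$ contains every colour.

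To get this colouring, we want an edge partition of $H$ into $\lceil\delta/2\rceil-1$ classes, each covering $V^+$. Vertices outside $V^+$ impose no constraint, but \Cref{thm:gupta} needs minimum degree over the whole graph. So we modify $H$ to a simple graph $H'$ of minimum degree at least $\lceil\delta/2\rceil$, with every edge of $H'$ either an edge of $H$ or incident only to auxiliary vertices, and with $H$-degrees on $V^+$ untouched.

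\begin{itemize}
\item \emph{Construction.} Take $H$ and attach to each low-degree vertex $w\notin V^+$ enough pendant structure to raise its degree. Concretely, join $w$ to new vertices of a large auxiliary clique $K_N$ with $N \ge \delta$. The clique's vertices then have high degree, and $H'$ stays simple.
\item \emph{Alternative.} Take two disjoint copies of $H$ and join the two copies of each $w\notin V^+$ by extra paths through high-degree gadgets.
\end{itemize}

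\Cref{thm:gupta} then gives a cover decomposition of $H'$ of size $\delta(H')-1 \ge \lceil\delta/2\rceil - 1$. Since every $v\in V^+$ has all its $H'$-edges in $H$, the edges of $H$ in each class still cover $V^+$. Colour each $H$-edge by its class and we are done.

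The main obstacle is this padding step. The added structure must raise every degree in $H'$ to at least $\lceil\delta/2\rceil$ while keeping $H'$ simple and leaving vertices of $V^+$ untouched. One detail is that vertices outside $V^+$ may have degree as small as $1$. If the padding turns out to be awkward, the fallback is to avoid padding entirely: apply a version of Gupta's theorem in which only the vertices of degree at least $d+1$ need to be covered. This is Gupta's general form: each vertex $v$ is covered by $\min(\deg v, d)$ classes when $d \le \delta-1$. Applied with $d=\lceil\delta/2\rceil-1$, it gives the result directly.
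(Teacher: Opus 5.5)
Your proof is correct and follows the paper's argument: high-degree vertices form a vertex cover $V^+$ of $H$, each vertex outside $V^+$ is padded with a fresh clique so that Gupta's theorem applies to the padded graph $H'$, and each edge cover is restricted to $E(H)$. The ``alternative'' gadget and the fallback are unnecessary, and the fallback is misstated, since $d\le\delta-1$ forces $\min(\deg v,d)=d$ everywhere. Also, your threshold $\lceil\delta/2\rceil+1$ is sharper than the paper's $\lceil\delta/2\rceil$, so padding with large enough cliques to make $\delta(H')\ge\lceil\delta/2\rceil+1$ would give the slightly stronger bound $\dom(G)\ge\lceil\delta/2\rceil$.
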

\begin{proof}
	Let $H$ be a simple graph such that $L(H) = G$. Let $V^+\subseteq V(H)$ be the vertices of $H$ of degree at least $\lceil\delta/2\rceil$; observe that $V^+$ is a vertex cover of $H$. Consider the graph $H'$ which is a supergraph of $H$ where we glue a clique of size $\delta$ to every vertex of $V(H)\setminus V^+$. Since $H'$ has minimum degree at least $\lceil\delta/2\rceil$, we apply \Cref{thm:gupta} to obtain a cover decomposition $E'_1 \sqcup \dots \sqcup E'_r$ of $H'$, where $r \ge \lceil\delta/2\rceil-1$. For $i \in [r]$, let $E_i \coloneqq E'_i \cap E(H)$; then $E_i$ covers $V^+$, since no vertex in $V^+$ is incident to an edge in $E(H')\setminus E(H)$, by construction. We conclude that the decomposition $E_1 \sqcup \dots \sqcup E_r$ is a domatic partition of $L(H) = G$.
\end{proof}
\begin{rk}
	\Cref{thm:linegraph-lower} is nearly tight: consider the line graph of the complete bipartite graph $K_{n,n}$; one easily shows that $L(K_{n,n})$ has degree $2(n-1)$ and has domatic number $n$, thus we obtain $\dom(L(K_{n,n})) \le \delta/2 + 1$.
\end{rk}

\subsection{Improved bounds via cover decompositions}

The notion of cover decomposition extends naturally to hypergraphs. This was considered by Bollob\'{a}s  et al. in \cite{bollobas2013cover}, where they establish an asymptotically tight lower bound for the size of a cover decomposition in a hypergraph of given rank, as a function of its minimum degree. Note that this result does not require the hypergraph to be simple or loopless (hyperedges of size $1$ are allowed).

\begin{thm}[Bollob\'{a}s, Pritchard, Rothvo\ss, Scott, 2013]\label{thm:cover-decomposition-hypergraphs}
	Let $H$ be a rank-$k$ hypergraph. Then $H$ has a cover decomposition of size $\frac{\delta(H)}{\log k + \mathcal{O}(\log\log k)}$.
\end{thm}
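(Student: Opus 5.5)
The plan is to combine a degree-regularisation step, an iterated random splitting step, and a final random colouring controlled by the Lovász Local Lemma (\Cref{cor:LLL}).

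\emph{Regularisation.} Write $\delta \coloneqq \delta(H)$. First I would make $H$ regular without destroying covers. Repeatedly pick a vertex $v$ of degree larger than $\delta$ and delete $v$ from one of its incident edges, keeping that edge (possibly now empty) in the edge set. Call the resulting hypergraph $H'$. It has rank at most $k$, and every vertex has degree exactly $\delta$. An edge cover of $H'$ becomes an edge cover of $H$ once each edge is restored to its original vertex set. Hence any cover decomposition of $H'$ gives one of $H$ of the same size, and we may assume $H$ is $\delta$-regular. The payoff is that every vertex now shares an edge with at most $k\delta$ other vertices, so dependency degrees are bounded.

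\emph{Direct colouring, and where it falls short.} Colour the edges independently and uniformly with $c$ colours. Let $A_v$ be the event that vertex $v$ misses some colour. Then
\[\pr{A_v} \le c(1-1/c)^{\delta} \le c e^{-\delta/c},\]
and $A_v$ depends on at most $k\delta$ other events. So \Cref{cor:LLL} applies as soon as $k\delta \cdot c e^{-\delta/c} \le 1/4$, which holds for $c = \delta/(\log(k\delta) + O(\log\log(k\delta)))$. This already settles the case $\delta \le (\log k)^{O(1)}$. In that range $\log(k\delta) = \log k + O(\log\log k)$, which is exactly the claimed bound.

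\emph{Splitting down to small degree.} For large $\delta$ we must remove the dependence on $\delta$ in the logarithm, and this is the main obstacle. I would randomly split the edge set into $m$ parts and use the LLL with Chernoff bounds (\Cref{thm:Chernoff}, together with its upper-tail analogue). The goal is that every vertex has degree in each part within $\delta/m \pm O(\sqrt{(\delta/m)\log(k\delta)})$, which is possible with dependency degree at most $k\delta$. One then re-regularises each part down to its minimum degree, as in the first step. Doing this in one shot only works when $\delta/m \gg \log(k\delta)$. So I would iterate, going from $\delta$ to about $\log^{C}(k\delta)$, then to $\log^C$ of that, and so on, in the spirit of a $\log^*$ scheme, until the degree $D$ in each part is polylogarithmic in $k$. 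After that, colour each part separately with $D/(\log k + O(\log\log k))$ colours by the direct colouring argument above.

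\emph{Bookkeeping.} The multiplicative loss at a stage with target degree $D$ is $1 - O(\sqrt{\log(kD)/D})$. These losses form a rapidly convergent product whose total is absorbed into the $O(\log\log k)$ term, provided the final $D$ is chosen as a large enough power of $\log k$. The delicate part is checking that the per-level deviation terms and the LLL conditions stay compatible all the way down to $D \approx (\log k)^{C}$.
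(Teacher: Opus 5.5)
The paper does not prove this statement. It imports it as a black box from Bollob\'as, Pritchard, Rothvo\ss{} and Scott and uses it only in \Cref{thm:linegraph-hypergraph-improved}, so there is no internal proof to compare against. Your outline is a self-contained reproof that uses only tools already in the preliminaries (\Cref{cor:LLL} and the lower tail in \Cref{thm:Chernoff}), and its architecture is sound:
\begin{itemize}
\item Trimming is legitimate because covering is monotone under enlarging edges.
\item Covers found in different parts are disjoint edge sets and each covers $H$, since every vertex keeps positive degree in every part.
\item Nothing in your argument uses simplicity. This matters, because the paper applies the theorem to $H'$, which has $\delta$ parallel singleton edges.
\item Iteration really is needed, since a single split cannot bring the degree below order $\log(k\delta)$.
\end{itemize}
Citing buys the paper brevity; your route buys self-containment. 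The price is bookkeeping that you should carry out rather than defer. It does go through, but two of your formulas are wrong.

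\textbf{The direct colouring denominator.} With $c=\delta/L$, the condition $(k-1)\delta\cdot c\,e^{-\delta/c}\le 1/4$ reads $e^{L}\ge 4(k-1)\delta^2/L$. So the correct denominator is $L=\log(4k\delta^2)=\log k+2\log\delta+O(1)$. Your $\log(k\delta)+O(\log\log(k\delta))$ fails once $\delta$ is superpolylogarithmic in $k$. It does coincide with $\log k+O(\log\log k)$ in the regime $\delta\le(\log k)^{O(1)}$, which is the only place you use it, so nothing breaks.

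\textbf{The loss per splitting stage.} This loss is governed by the degree before the split, not by the target degree. Split a $\delta_i$-regular hypergraph into $m$ parts of mean $\mu=\delta_i/m$. The neighbourhood sum in \Cref{cor:LLL} is then at most $k\delta_i m\,e^{-\varepsilon^2\mu/2}\le k\delta_i^2e^{-\varepsilon^2\mu/2}$. Hence $\varepsilon_i=\sqrt{2L_i/\mu}$ with $L_i\coloneqq\log(4k\delta_i^2)$, not $O(\sqrt{\log(kD)/D})$. The bookkeeping then goes as follows.
\begin{itemize}
\item Take $\mu\approx L_i^3$. This gives $\varepsilon_i=O(1/L_i)$ and $L_{i+1}\le\log(4k)+6\log L_i+O(1)$. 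With $\mu\approx L_i^2$ you would only reach $\log k+O(\sqrt{\log k})$.
\item Stop once $\delta_i\le(\log k)^7$. Every stage except the last has $L_i>(\log k)^2$, since otherwise the next degree is at most $2L_i^3\le(\log k)^7$.
\item Consecutive such values satisfy $L_i\ge e^{L_{i+1}/12}$. Therefore $\sum_i\varepsilon_i=O(1/\log k)$, however many stages there are.
\item The final parts have degree between $(\log k)^3/2$ and $(\log k)^7$. There the direct colouring costs a denominator of $\log k+O(\log\log k)$, with negligible rounding.
\end{itemize}
Finally, the upper-tail Chernoff bound is unnecessary, because re-trimming each part already controls its maximum degree.
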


Using the same strategy as in \Cref{thm:linegraph-lower}, we deduce the following bound on the domatic number of line graphs of rank-$k$ hypergraphs, improving \Cref{cor:rank-k-linegraph-loose} by a factor $(1+o(1))\frac{k}{\log k}$. The following is a restatement of \Cref{thm:linegraph}.

\begin{thm}
	\label{thm:linegraph-hypergraph-improved}
	Let $G$ be the line graph of some rank-$k$ (multi)hypergraph, and $\delta \coloneqq \delta(G)$. Then 
	\[\dom(G) \ge \frac{\delta}{k(\log k + O(\log \log k))}.\]
\end{thm}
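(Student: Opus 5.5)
We mimic the proof of \Cref{thm:linegraph-lower}, with \Cref{thm:cover-decomposition-hypergraphs} in place of \Cref{thm:gupta}. The idea is to work with the \emph{dual} hypergraph, in which a vertex is an edge of the original hypergraph, so that a cover decomposition becomes a domatic partition of the line graph.

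Let $H$ be a rank-$k$ (multi)hypergraph with $L(H)=G$, and for $v\in V(H)$ let $E_v$ be the set of edges of $H$ containing $v$. Set
\[V^+ \coloneqq \sst{v\in V(H)}{|E_v|\ge \delta/k}.\]
As in \Cref{cor:rank-k-linegraph-loose}, $V^+$ is a transversal of $H$. Indeed, an edge $e$ has $\deg_G(e)+1 \ge \delta+1$ edges meeting it (counting itself), and these are distributed among at most $k$ vertices of $e$, so some $v\in e$ has $|E_v| \ge (\delta+1)/k$.

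Form the dual hypergraph $H^*$ with vertex set $V^+$ and one hyperedge $e^*\coloneqq e\cap V^+$ for each $e\in \cE(H)$. Then:
\begin{itemize}
\item $H^*$ is a multi-hypergraph of rank at most $k$.
\item Every $e^*$ is nonempty, because $V^+$ is a transversal; hyperedges of size $1$ are allowed, as noted before \Cref{thm:cover-decomposition-hypergraphs}.
\item The degree of $v\in V^+$ in $H^*$ is $|E_v|\ge \delta/k$, so $\delta(H^*)\ge \delta/k$.
\end{itemize}

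Apply \Cref{thm:cover-decomposition-hypergraphs} to $H^*$. This gives a partition $\cE(H^*)=\cE_1\sqcup\dots\sqcup\cE_r$ into covers of $V^+$, with
\[ r \ge \frac{\delta/k}{\log k+O(\log\log k)}. \]
Pull this back to a partition of $\cE(H)=V(G)$ via $e\mapsto e^*$.

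It remains to check that each $\cE_i$ is dominating in $G$. Take any edge $f$ of $H$. It contains some $v\in V^+$, since $V^+$ is a transversal. Because $\cE_i$ covers $v$, there is an edge $e\in\cE_i$ with $v\in e$. Then either $e=f$, or $e$ and $f$ intersect at $v$ and so are adjacent in $G$. In both cases $f$ is dominated by $\cE_i$, so $\dom(G)\ge r$, which is the claimed bound.

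The main point requiring care is that \Cref{thm:cover-decomposition-hypergraphs} must apply to multi-hypergraphs with singleton edges. Distinct edges of $H$ may have the same trace on $V^+$, and traces may be singletons; the remark before that theorem permits exactly this. Also, the $O(\cdot)$ term must be uniform, which holds since it depends only on $k$. For $\delta<k$ the bound is trivial anyway.
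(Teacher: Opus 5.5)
Your proof is correct and follows the paper's argument: the paper also takes $V^+$ (the vertices of degree at least $\lceil \delta/k\rceil$, a transversal), applies \Cref{thm:cover-decomposition-hypergraphs}, and restricts the resulting covers to the edges of $H$. The only difference is that the paper pads each vertex outside $V^+$ with $\delta$ singleton edges rather than taking the traces $e\cap V^+$, and both versions rely equally on the theorem accepting repeated and singleton edges. Also, your $H^*$ is the trace of $H$ on $V^+$, not the dual hypergraph, since its vertices are vertices of $H$ rather than edges; this misnomer does not affect the argument.
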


\begin{proof}
	Let $H$ be a hypergraph such that $L(H) = G$. Let $V^+\subseteq V(H)$ be the vertices of $H$ of degree at least $\lceil\delta/k\rceil$; observe that $V^+$ is a vertex cover of $H$. Consider the hypergraph $H'$ obtained from $H$ by adding $\delta$ copies of $\{v\}$ as an edge for each vertex of $v\in V(H)\setminus V^+$. By construction, $H'$ has minimum degree at least $\lceil\delta/k\rceil$, so we can apply \Cref{thm:cover-decomposition-hypergraphs} to obtain a cover decomposition $E'_1 \sqcup \dots \sqcup E'_r$ of $H'$, where $r \ge \lceil\delta/k\rceil/(\log k + O(\log \log k))$. For $i \in [r]$, let $E_i \coloneqq E'_i \cap E(H)$; then $E_i$ covers $V^+$, since no vertex in $V^+$ is incident to an edge in $E(H')\setminus E(H)$, by construction. We conclude that the decomposition $E_1 \sqcup \dots \sqcup E_r$ is a domatic partition of $L(H) = G$.
\end{proof}
\begin{rk}
	\Cref{thm:linegraph-hypergraph-improved} is tight up to a $(1 + o(1))\log k$ factor; the complete $k$-partite $k$-uniform hypergraph $\cH$ with all parts of size $n$ has edge-degree $\delta_e \ge k(n-1)^{k-1}$ and has edge domination number at least $n$, therefore $\dom(L(\cH)) \le \frac{n^k}{n} =  (1-o(1))\frac{\delta_e}{k}$.
\end{rk}

\section{Cographs}
\label{sec:cographs}

The class of \emph{cographs} corresponds to the class of $P_4$-free graphs. Every cograph can be constructed inductively by taking either the disjoint union or the complete join of two smaller cographs, the base case being a single vertex. As a consequence, a cograph can be represented by a \emph{cotree}, that is a binary tree whose internal nodes are labelled either $\wedge$ to represent a complete join of the cographs represented by their children, or $\vee$ to represent the disjoint union of the cographs represented by their children. The leaves of the cotree are the vertices of the cograph it represents.

We begin with a tight bound for $\tdom$ of cographs, with equality when $G$ is a clique of odd order.

\begin{thm}
    \label{thm:tdom-cographs}
    Let $G$ be a cograph with minimum degree $\delta \geq 1$.
    We have
    \[ \tdom(G) \ge \frac{\delta(G)}{2}.\]
\end{thm}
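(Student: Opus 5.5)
The plan is to argue by induction on the number of vertices, following the cotree decomposition. A cograph on at least two vertices is either a disjoint union or a complete join of two smaller cographs. Induced subgraphs of cographs are cographs, so the induction hypothesis applies to any proper induced subgraph of minimum degree at least $1$.

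\emph{Disconnected case.} If $G=G_1+G_2$, then $\delta(G_i)\ge\delta$ for each $i$, and by induction $G_i$ has a total domatic partition with at least $\delta/2$ parts. Total domination is a local condition, since every vertex only needs a neighbour in each part. I therefore take the union of the $j$-th part of $G_1$ and the $j$-th part of $G_2$, for $j$ up to the smaller of the two partition sizes, and put any surplus parts into an arbitrary class. The result is a total domatic partition of $G$ with at least $\delta/2$ parts.

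\emph{Join case.} Suppose $G=A\wedge B$ with $|A|\le|B|$. Then
\[\delta=\min\{\delta(A)+|B|,\ \delta(B)+|A|\}.\]
The key observation is that a set $S$ totally dominates $G$ whenever it meets both $A$ and $B$. Pairing the vertices of $A$ injectively with vertices of $B$ therefore gives $|A|$ disjoint total dominating sets, with the leftover vertices thrown into any class. This settles the case $|A|\ge\delta/2$.

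If $|A|<\delta/2$, then $\delta(B)\ge\delta-|A|>\delta/2\ge 1$, so $B$ has no isolated vertex. By induction $B$ has a total domatic partition $T_1,\dots,T_m$ with $m\ge\delta(B)/2$. Every nonempty subset of $B$ dominates $A$ in $G$, so each $T_j$ is already total dominating in $G$, and the vertices of $A$ can be added to any class. This gives $\tdom(G)\ge(\delta-|A|)/2$, which is short of the target by $|A|/2$. It does suffice outright when $\delta(B)\ge\delta$, for instance when $|A|\le\delta(B)$... more precisely, whenever $\delta(B)\ge \delta$.

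\emph{Main obstacle.} The hard step is recovering the missing $|A|/2$ when $|A|$ is small and $\delta(B)$ is close to $\delta-|A|$. I would first try to exploit that each $T_j$ has at least two vertices, and that the vertices of $A$ behave like universal vertices towards $B$. The idea is to split a total dominating set $T_j$ into two sets, each completed by a vertex of $A$, which nets one extra part per vertex of $A$ used, although only a gain of one part per two vertices of $A$ is needed. Such a splitting of $T_j\cup\{a,a'\}$ into two sets each meeting $A$ and $B$ only totally dominates $A$; the $B$-vertices are only guaranteed to be totally dominated if each piece also keeps a $B$-neighbour of every vertex of $B$.

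If this bookkeeping fails, I would switch to a strengthened induction hypothesis. For a cograph $B$ together with a set $U$ of $u$ extra vertices that are universal to $B$, independent among themselves, and need only one neighbour in each part, the claim would be that the number of parts is at least $(\delta(B)+u)/2$. The join case $A\wedge B$, with $A$ independent or arbitrary, then reduces to this statement, and the statement itself is proved by the same cotree induction applied to $B$.
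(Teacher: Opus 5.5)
Your proposal does not yet prove the theorem. The disconnected case and the join subcases $|A|\ge\delta/2$ and $\delta(B)\ge\delta$ are fine. The remaining join case, which carries the whole difficulty, is left open, and there are two concrete problems with how you leave it.

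First, the objection that made you drop the splitting idea is mistaken. By your own key observation, any set meeting both $A$ and $B$ is a total dominating set of $G$: its $A$-vertex is adjacent to all of $B$, and its $B$-vertex to all of $A$. So a piece needs to keep only one vertex of $T_j$, not a $B$-neighbour of every vertex of $B$.

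Second, your fallback hypothesis is unproved, and false as stated (with the vertices of $U$ required to have a neighbour in every part). Take $B=K_2$ and $u=4$. Every part must meet $B$ to dominate $U$, so there are at most $2$ parts, whereas $(\delta(B)+u)/2=5/2$. Any correct version must be capped by $|B|$, and then it is just the theorem for $\overline{K_u}\wedge B$, for which you give no argument.

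The missing step is the paper's exchange argument, which is exactly your splitting idea carried out.

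\begin{enumerate}
\item Take a total domatic partition $\cP_0$ of the larger side $B$ with $\tdom(B)$ parts; each part has size at least $2$.
\item If every part has size exactly $2$, then $\tdom(B)=|B|/2$, and $\tdom(G)\ge\tdom(B)+\tdom(A)\ge(|B|+\delta(A))/2\ge\delta/2$.
\item Otherwise, let $\cQ\subseteq\cP_0$ be a smallest family with $\card{\bigcup\cQ}\ge|A|$, starting with a part of size at least $3$. Dissolve $\cQ$ and match $A$ into $\bigcup\cQ$. Each matched pair is a total dominating set of $G$, so $\tdom(G)\ge\tdom(B)-|\cQ|+|A|$.
\item For $|A|\ge2$ one has $|\cQ|\le\ceil{(|A|-1)/2}\le|A|/2$, which gives $\tdom(G)\ge(\delta(B)+|A|)/2\ge\delta/2$.
\end{enumerate}

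There is one caveat if you adopt this. For $|A|=1$ one has $|\cQ|=1$, so the exchange gains nothing; the paper's estimate $|\cQ|\le\ceil{(n_1-1)/2}$ is false there. This matters when $\delta(B)$ is even and $\tdom(B)=\delta(B)/2$, for example $B=K_3+K_3$, where you get only $1$ but need $2$. That case needs its own argument:
\begin{itemize}
\item If $B$ is disconnected, apply induction to $C\wedge\{a\}$ for each component $C$ of $B$; these have minimum degree at least $\delta(B)+1$. Then merge the resulting $(\delta(B)/2+1)$-part partitions index-wise, indexing so that $a$ lies in the last part of each.
\item If $B$ is connected, re-split $G$ as a join whose two sides both have at least two vertices, and apply the argument above.
\end{itemize}
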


\begin{proof}
We prove the result by induction on $G$.
When $G$ is a single vertex, we have $\delta(G)=0$ and $\tdom(G)=0$, so the result holds.
If $G$ is not connected, let $C$ be a connected component with the smallest value of $\tdom(G[C])$. One has
\[ \tdom(G) = \tdom(G[C]) \ge \frac{\delta(G[C])}{2} \ge \frac{\delta(G)}{2},\]
as desired. 
Finally, if $G = G_0 \wedge G_1$ with $n_0 \coloneqq |V(G_0)|\ge n_1 \coloneqq |V(G_1)|$, then 
\[ \delta(G) = \min\{\delta(G_0)+n_1, \delta(G_1)+n_0\}.\]

For all $i\in \{0,1\}$, every total dominating set of $G_i$ is also a total dominating set of $G$, so we have $\tdom(G) \ge \tdom(G_0) + \tdom(G_1)$. If we have $\tdom(G_0) = n_0/2$, then by the induction hypothesis we obtain 
\[ \tdom(G) \ge \frac{n_0}{2} + \frac{\delta(G_1)}{2} \ge \frac{\delta(G)}{2}.\]
We now assume that $\tdom(G_0)<n_0/2$. 
Let $\cP_0$ be a partition of $V(G_0)$ into $\tdom(G_0)$  total dominating sets; every part of $\cP_0$ has size at least $2$ (since total dominating sets have size at least $2$), and since $|\cP_0|<n_0/2$ at least one part has size at least $3$.
Let $\cQ \subseteq \cP_0$ be a subset of minimum size such that $\sum_{Q \in \cQ} |Q| \ge n_1$. We have $|\cQ|\le \ceil{(n_1-1)/2} \le n_1/2$. Let $M$ be a matching between $V(G_1)$ and $\bigcup \cQ$ that saturates $V(G_1)$. Every $\{u,v\}\in M$ is a total dominating set of $G$, so $\cP \coloneqq (\cP_0 \setminus \cQ) \cup M$ is a partition of $V(G)$ into total dominating sets, of size
\[
    |\cP| \ge \tdom(G_0)-|\cQ| + n_1 \ge \frac{\delta(G_0)}{2} + \frac{n_1}{2} \ge \frac{\delta(G)}{2}. \qedhere
\]
\end{proof}

We slightly adapt the above proof to derive a tight bound for the domatic number of cographs. We first need to reduce the universal vertices from a given graph $G$ (a vertex $v\in V(G)$ is \emph{universal in $G$} if $N_G[v]=V(G)$).

We begin with the following straightforward observation.

\begin{obs}
    \label{obs:dom-vertex-removal}
    Let $G$ be a graph, and let $v\in V(G)$. Then
    \[ \dom(G) \le 1 + \dom(G \setminus v). \]
\end{obs}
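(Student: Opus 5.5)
Given a domatic partition $\cP$ of $G$ of maximum size $k = \dom(G)$, we will produce a domatic partition of $G\setminus v$ with at least $k-1$ parts. The idea is to discard the part containing $v$ and absorb its remaining vertices elsewhere.

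Let $D_0 \in \cP$ be the part with $v \in D_0$. Every other part $D \in \cP\setminus\{D_0\}$ does not contain $v$, so $D \subseteq V(G)\setminus\{v\}$. We check that such a $D$ still dominates $G\setminus v$. Take any $u \in V(G)\setminus\{v\}$. Since $D$ dominates $G$, either $u \in D$ or $u$ has a neighbour $w \in D$. In the latter case $w \neq v$ because $v\notin D$, so $uw$ is an edge of $G\setminus v$. Hence $D$ is a dominating set of $G\setminus v$.

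Now add the leftover vertices $D_0 \setminus \{v\}$ to any one of the other parts. Supersets of dominating sets remain dominating, so this yields a partition of $V(G)\setminus\{v\}$ into $k-1$ dominating sets of $G\setminus v$. If $k=1$ there is nothing to prove.

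This gives $\dom(G\setminus v) \ge \dom(G)-1$. There is no real obstacle; the only point needing care is that domination in $G$ by a set avoiding $v$ never uses $v$ as the dominating witness, and this is immediate.
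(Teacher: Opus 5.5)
Your proof is correct and follows the paper's argument: discard the part of a maximum domatic partition that contains $v$, and reassign the remaining vertices of that part to the other parts. You also spell out two checks the paper leaves implicit, namely that a dominating set of $G$ avoiding $v$ still dominates $G\setminus v$, and that the case $k=1$ is trivial.
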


\begin{proof}
    Let $\cP$ be a domatic partition of $G$ of size $\dom(G)$, and let $D\in\cP$ be the part containing $v$. Remove $D$ from the partition. Then assign each vertex of $D\setminus\{v\}$ arbitrarily to one of the remaining parts.
    This is a domatic partition of $G\setminus v$, hence $\dom(G\setminus v) \ge \dom(G)-1$, as desired.
\end{proof}

\begin{lemma}\label{lem:cograph-universal-vertices}
Let $G$ be a graph, and let $\cS$ be the set of universal vertices of $G$.
Then $\dom(G) = |\cS| + \dom(G\setminus \cS)$.
\end{lemma}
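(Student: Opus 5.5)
We prove the two inequalities separately. We use the convention that $\dom$ of the empty graph is $0$, which covers the case where $G$ is complete and $G\setminus\cS$ is empty.

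\emph{Lower bound $\dom(G)\ge |\cS|+\dom(G\setminus \cS)$.} Take a domatic partition $\cP'$ of $G\setminus\cS$ of size $\dom(G\setminus\cS)$. Every part $D\in\cP'$ dominates $V(G)\setminus\cS$. It also dominates every $s\in\cS$, because $s$ is universal and so is adjacent to every vertex of $D$ (note $D\neq\emptyset$). Hence each part of $\cP'$ is a dominating set of $G$. Each singleton $\{s\}$ with $s\in\cS$ is dominating as well, since $N_G[s]=V(G)$. The family $\cP'\cup\{\{s\}: s\in\cS\}$ therefore partitions $V(G)$ into dominating sets, and its size is $|\cS|+\dom(G\setminus\cS)$.

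\emph{Upper bound $\dom(G)\le |\cS|+\dom(G\setminus\cS)$.} We remove the vertices of $\cS$ one at a time and apply \Cref{obs:dom-vertex-removal} at each step, each time to the current graph. Write $\cS=\{s_1,\dots,s_m\}$ and let $G_j\coloneqq G\setminus\{s_1,\dots,s_j\}$. The observation gives $\dom(G_{j-1})\le 1+\dom(G_j)$ for each $j$. Telescoping these $m$ inequalities yields $\dom(G)\le m+\dom(G_m)=|\cS|+\dom(G\setminus\cS)$.

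The only point needing care is the empty-graph case. If $G$ is complete, then $\cS=V(G)$ and $\dom(G)=|V(G)|$, which agrees with the formula under our convention. No step here is a real obstacle: the universality of the vertices in $\cS$ is used only in the lower bound, and the upper bound holds for removing any vertex set.
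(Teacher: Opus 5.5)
Your proposal is correct and follows the paper's proof. The upper bound comes from applying \Cref{obs:dom-vertex-removal} once per universal vertex, and the lower bound from adjoining the singletons $\{s\}$, $s\in\cS$, to a domatic partition of $G\setminus\cS$; your extra remarks (that each part also dominates $\cS$ by universality, and the empty-graph convention) only make explicit what the paper leaves implicit.
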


\begin{proof}
We have the upper bound $\dom(G) \le |\cS| + \dom(G\setminus \cS)$ by repeating $|\cS|$ times \Cref{obs:dom-vertex-removal}. Let us now prove the lower bound.

Let $\cP_0$ be a partition of $G\setminus \cS$ into $\dom(G\setminus \cS)$ dominating sets of $G\setminus\cS$, and let $\cP \coloneqq \cP_0\cup \sst{\{s\}}{s\in\cS}$.
This is a domatic partition of $G$, hence $\dom(G) \ge \dom(G\setminus \cS) + |\cS|$, as desired.
\end{proof}

\begin{thm}
\label{thm:dom-cographs}
For every cograph $G$ with minimum degree $\delta$,
\[
\dom(G) \geq 1+\frac{\delta}2.
\]    
\end{thm}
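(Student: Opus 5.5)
The plan is to first strip universal vertices using \Cref{lem:cograph-universal-vertices}, and then run an induction on the cotree that mirrors the proof of \Cref{thm:tdom-cographs}, with dominating sets in place of total dominating sets.

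\emph{Reduction to graphs without universal vertices.} Let $\cS$ be the set of universal vertices of $G$.
\begin{itemize}
\item If $G\setminus\cS$ is empty, then $G$ is complete, and $\dom(G)=\delta+1\ge 1+\delta/2$.
\item Otherwise, every vertex of $G\setminus\cS$ loses exactly $|\cS|$ neighbours, so $\delta(G\setminus\cS)\ge \delta-|\cS|$. Assuming the bound for $G\setminus\cS$, \Cref{lem:cograph-universal-vertices} gives
\[\dom(G)\ge |\cS|+1+\frac{\delta-|\cS|}{2}\ge 1+\frac{\delta}{2}.\]
\end{itemize}
So it suffices to prove the statement, by induction on $|V(G)|$, for cographs $G$ with no universal vertex. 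The induction hypothesis is applied to smaller cographs in full generality, and these may have universal vertices; this is fine because the reduction above is itself part of the induction.

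\emph{Disconnected case.} A domatic partition of $G$ is obtained by combining domatic partitions of the components. Hence $\dom(G)$ is the minimum of $\dom(G[C])$ over components $C$. Each component has minimum degree at least $\delta$, so induction gives the bound immediately.

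\emph{Join case.} Write $G=G_0\wedge G_1$ with $n_0\ge n_1$, so that $\delta=\min\{\delta(G_0)+n_1,\delta(G_1)+n_0\}$. I will use three observations.
\begin{itemize}
\item Any dominating set of $G_i$ dominates $G$, since the other side is completely joined to it. Hence $\dom(G)\ge \dom(G_0)+\dom(G_1)$.
\item Any pair $\{u,v\}$ with $u\in V(G_0)$ and $v\in V(G_1)$ dominates $G$.
\item Since $G$ has no universal vertex, no $G_i$ has a universal vertex. In particular, every part of a domatic partition of $G_0$ has size at least $2$: a singleton part would be a universal vertex of $G_0$.
\end{itemize}

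Let $\cP_0$ be a domatic partition of $G_0$ of size $\dom(G_0)$. There are two sub-cases.
\begin{itemize}
\item \emph{All parts of $\cP_0$ have size exactly $2$.} Then $\dom(G_0)=n_0/2$. Induction on $G_1$ gives
\[\dom(G)\ge \frac{n_0}{2}+1+\frac{\delta(G_1)}{2}\ge 1+\frac{\delta}{2}.\]
\item \emph{Some part of $\cP_0$ has size at least $3$.} Choose a minimum subfamily $\cQ\subseteq\cP_0$ with total size at least $n_1$, built greedily in decreasing order of part size. This is possible since $n_0\ge n_1$. The first chosen part has size at least $3$ and the others have size at least $2$, and minimality gives $3+2(|\cQ|-2)\le n_1-1$. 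Hence $|\cQ|\le n_1/2$.

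Now match $V(G_1)$ into $\bigcup\cQ$, and form the new partition consisting of $\cP_0\setminus\cQ$ together with the $n_1$ matched cross pairs. Leftover vertices of $\bigcup\cQ$ are added to arbitrary parts, which keeps every part dominating. This yields, by induction on $G_0$,
\[\dom(G)\ge \dom(G_0)-\frac{n_1}{2}+n_1\ge 1+\frac{\delta(G_0)}{2}+\frac{n_1}{2}\ge 1+\frac{\delta}{2}.\]
\end{itemize}

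\emph{Main obstacle.} The delicate point is getting the $+1$ rather than the $\delta/2$ of \Cref{thm:tdom-cographs}. A naive count $|\cQ|\le\lceil n_1/2\rceil$ loses $1/2$ when $n_1$ is odd. The fix is the exact case split on whether $\cP_0$ has a part of size at least $3$, together with excluding universal vertices beforehand so that singleton parts cannot occur. I expect checking these boundary cases carefully to be where the real work lies.
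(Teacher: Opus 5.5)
Your proof is correct and follows essentially the same route as the paper. Both strip universal vertices via \Cref{lem:cograph-universal-vertices} and take the worst component in the disconnected case. In the join case, both either use $\dom(G)\ge\dom(G_0)+\dom(G_1)$ when every part of $\cP_0$ has size $2$, or else replace a minimum family $\cQ$ of parts by cross pairs.

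One small detail: your inequality $3+2(|\cQ|-2)\le n_1-1$ follows from minimality only when $|\cQ|\ge 2$. For $|\cQ|=1$ you also need $n_1\ge 2$. This does hold, because the unique vertex of a one-vertex $G_1$ would be universal in $G$.
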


\begin{proof}
We proceed by induction on the number of vertices of $G$.
When $G$ has a single vertex, we have $\delta(G)=0$ and $\dom(G)=1$, so the result holds.
If $G$ is not connected, let $C$ be its connected component with the smallest value of $\dom(G[C])$. One has
\[
    \dom(G) = \dom(G[C]) \ge \frac{\delta(G[C])}{2}+1 \ge \frac{\delta(G)}{2}+1,
\]
as desired. 
If $G$ has at least one universal vertex, let $\cS$ be the set of universal vertices of $G$. By \Cref{lem:cograph-universal-vertices}, we have
\[
    \dom(G)=|\cS|+\dom(G\setminus \cS).
\]
If $G\setminus \cS$ is empty, then $G$ is complete, and the result is immediate. Otherwise, by the induction hypothesis applied to $G\setminus \cS$, we obtain
\[
    \dom(G)
    \ge |\cS|+1+\frac{\delta(G\setminus \cS)}{2}.
\]
Since $G\setminus \cS$ is nonempty, we have
\[
    \delta(G)=|\cS|+\delta(G\setminus \cS).
\]
Therefore,
\[
    \dom(G)
    \ge |\cS|+1+\frac{\delta(G)-|\cS|}{2}
    = 1+\frac{\delta(G)}{2}+\frac{|\cS|}{2}
    \ge 1+\frac{\delta(G)}{2}.
\]
We now assume that $G$ has no universal vertex, and that $G = G_0 \wedge G_1$ with $n_0 \coloneqq |V(G_0)|\ge n_1 \coloneqq |V(G_1)|$. We have
\[
    \delta(G) = \min\{\delta(G_0)+n_1, \delta(G_1)+n_0\}.
\]

For all $i\in \{0,1\}$, every dominating set of $G_i$ is also a dominating set of $G$, so we have $\dom(G) \ge \dom(G_0) + \dom(G_1)$. If we have $\dom(G_0) = n_0/2$, then by the induction hypothesis we obtain 
\[ \dom(G) \ge \frac{n_0}{2} + \frac{\delta(G_1)}{2}+1 \ge \frac{\delta(G)}{2}+1.\]
We now assume that $\dom(G_0)<n_0/2$. 
Let $\cP_0$ be a partition of $V(G_0)$ into $\dom(G_0)$ dominating sets; every part of $\cP_0$ has size at least $2$ (since $G$ and therefore also $G_0$ has no universal vertex), and since $|\cP_0|<n_0/2$ at least one part has size at least $3$.
Let $\cQ \subseteq \cP_0$ be a subset of minimum size such that $\sum_{Q \in \cQ} |Q| \ge n_1$. We have $|\cQ|\le \ceil{(n_1-1)/2} \le n_1/2$. Let $M$ be a matching between $V(G_1)$ and $\bigcup \cQ$ that saturates $V(G_1)$. Every $\{u,v\}\in M$ is a dominating set of $G$, so $\cP \coloneqq (\cP_0 \setminus \cQ) \cup M$ is a partition of $V(G)$ into dominating sets, of size
\[
    |\cP| \ge \dom(G_0)-|\cQ| + n_1 \ge \frac{\delta(G_0)}{2} + 1 + \frac{n_1}{2} \ge \frac{\delta(G)}{2} + 1. \qedhere
\]
\end{proof}

Observe that the bound given by \Cref{thm:dom-cographs} is tight, as equality holds when $G$ is a complete multipartite graph with all parts of size $2$.

\section{Forbidding stars}
\label{sec:stars}

In this section, we denote by $N^2_G[u]$ the set of vertices at distance at most 2 from a vertex $u$ in a graph $G$. We omit $G$ when it is clear from context.

We show the existence of dominating colourings of graphs $G$ with some well-chosen structural properties through an analysis of the uniformly random $k$-colouring of $G$. This strategy has been successfully applied in \cite{FHKS02} to obtain the following lower bound, which in particular demonstrates that the class of regular graphs is DOM-bounded.

\begin{thm}[Feige, Halld\'orsson, Kortsarz, Srinivasan, 2002]
    For every graph $G$,
    \[ \dom(G) \ge (1+o(1)) \frac{\delta(G)}{\log \Delta(G)}. \]
\end{thm}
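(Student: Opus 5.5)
The natural route is a uniformly random $k$-colouring combined with the Lov\'asz Local Lemma in the form of \Cref{cor:LLL}. The target is $k = (1-\varepsilon)\delta/\ln \Delta$, where $\Delta = \Delta(G)$ and $\varepsilon$ absorbs the $o(1)$ term. Colour every vertex independently and uniformly from $[k]$. For each vertex $v$, let $A_v$ be the event that $N[v]$ misses at least one colour.

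First, bound the probability of each bad event. A union bound over the $k$ colours gives
\[\pr{A_v} \le k\pth{1-\frac1k}^{\card{N[v]}} \le k\, e^{-(\delta+1)/k}.\]
With the choice of $k$, this is at most $k\,\Delta^{-1/(1-\varepsilon)}$. Since $k \le \delta \le \Delta$, this equals $\Delta^{-\Theta(\varepsilon)}\cdot \Delta^{-1}$ up to lower-order factors. A slightly more careful choice, taking $k = \delta/(\ln \Delta + c\ln\ln\Delta)$ or $k=(1-\varepsilon)\delta/\ln\Delta$ with $\varepsilon\to 0$ slowly, makes it $o(\Delta^{-2})$ once the exponent exceeds $\ln(k\Delta^2)$. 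I would tune this so that $\pr{A_v} \le 1/(4\Delta^2+4)$.

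Second, build the dependency graph. $A_v$ is determined by the colours on $N[v]$, so $A_v$ and $A_u$ are adjacent only if $N[u]\cap N[v]\neq\emptyset$, i.e.\ $u\in N^2[v]$. There are at most $\Delta^2$ such $u$. Summing gives $\sum_{u} \pr{A_u} \le \Delta^2\cdot \max_u \pr{A_u} \le 1/4$, which is hypothesis \eqref{eq:LLL}. \Cref{cor:LLL} then yields a colouring avoiding all $A_v$, which is exactly a dominating $k$-colouring.

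The main obstacle is the bookkeeping in the exponent. We need $(\delta+1)/k \ge \ln k + 2\ln\Delta + \ln 4 + O(1)$, which forces $k \approx \delta/(3\ln\Delta)$ with the naive LLL count, not $\delta/\ln\Delta$. To recover the constant $1$ in $(1+o(1))\delta/\ln\Delta$, I would first reduce to an almost-regular-neighbourhood situation. Each vertex keeps only a subset of exactly $\delta+1$ vertices of its closed neighbourhood as its \emph{witness set}, so each event depends on $\delta+1$ colours. I would then sparsify via random subsampling: each vertex is retained as a potential colour carrier with probability $q$, and the hypergraph of witness sets is treated as having edges of size about $q\delta$ and degrees about $q\Delta$.

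An alternative, and likely the intended one, is the Feige--Halld\'orsson--Kortsarz--Srinivasan two-round argument: use a Chernoff-plus-LLL step to shrink degrees to polylogarithmic size while preserving the ratio $\delta/\Delta$ of the hypergraph $\{N[v]\}$. In that regime $\ln$ of the dependency degree is $O(\ln\ln\Delta)$, negligible against $\ln\Delta$. The final colouring then needs only $(\delta'+1)/k \ge (1+o(1))\ln\Delta$, giving the stated bound. I would carry out the crude $\delta/(3\ln\Delta)$ version first and then refine it with this degree-reduction step.
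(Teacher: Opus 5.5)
The paper does not prove this statement; it is quoted from \cite{FHKS02}. So your argument has to stand on its own, and as written it proves the bound only up to a constant factor. Your first step does not give what you first claim. For $k=(1-o(1))\delta/\ln\Delta$, a vertex of minimum degree already has $\pr{A_v}\ge(1-1/k)^{\delta+1}=\Delta^{-1-o(1)}$. This is nowhere near $o(\Delta^{-2})$, and the prefactor $k$ in your union bound can be as large as $\Delta^{1-o(1)}$. As you then acknowledge, the uniform colouring with \Cref{cor:LLL} needs $(\delta+1)/k\ge\ln k+2\ln\Delta+O(1)$, so it yields only about $\delta/(3\ln\Delta)$ in the worst case.

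Everything therefore rests on the refinement, and there the accounting is reversed. With witness sets $N'[v]$ of size $\delta+1$, the LLL exponent must beat $\ln k+\ln(\text{edge size})+\ln(\text{vertex degree})$. Here the vertex degree is the number of witness sets containing a vertex $w$, which can be up to $\Delta+1$ (one for each $u\in N[w]$). Subsampling or partitioning vertices shrinks edge sizes, but it does not shrink vertex degrees. So your claims that degrees become ``about $q\Delta$'' and that the log of the dependency degree is $O(\ln\ln\Delta)$ are false. If they were true, you would get $\dom(G)\ge\delta/O(\ln\ln\Delta)$, contradicting \Cref{prop:alpha2}, where $\log\Delta\sim\log\delta$. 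Moreover, retaining each vertex with probability $q$ and colouring only the retained vertices loses a factor $q$ in the number of colours.

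The missing idea is to partition rather than subsample, with disjoint palettes.
\begin{itemize}
\item Randomly split $V$ into $m\approx(\delta+1)/\ln^3\Delta$ groups, and set $s=(\delta+1)/m$ and $\epsilon=(\ln\Delta)^{-1/2}$.
\item By Chernoff and \Cref{cor:LLL} (each event has at most $(\delta+1)(\Delta+1)$ neighbours), with positive probability every $N'[v]$ meets every group in at least $(1-\epsilon)s$ vertices.
\item Trim these intersections to size $s_0=\ceil{(1-\epsilon)s}$, and colour each group independently from its own palette of $k_0$ colours.
\end{itemize}
Within a group, edge sizes and $k_0$ are now polylogarithmic while vertex degrees stay at most $\Delta+1$. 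The LLL condition therefore becomes $s_0/k_0\ge\ln\Delta+O(\ln\ln\Delta)$. Every $N[v]$ sees each palette in full, so all $mk_0=(1-o(1))(\delta+1)/\ln\Delta$ colour classes are dominating. If $\delta+1<\ln^3\Delta$, no partition is needed, since then $\ln k,\ln\delta=O(\ln\ln\Delta)$ already.

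The surviving $\ln\Delta$ comes precisely from the vertex degrees, which no sparsification removes. What partitioning buys is eliminating the extra $\ln k+\ln\delta$ terms.
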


Using a similar approach, relying on increasingly technical applications of the LLL, we show that unit disk graphs and, more generally, star-free graphs are DOM-bounded.

\subsection{Unit disk graphs}
We first consider unit disk graphs as a special case of star-free graphs (they are contained in $\forb(K_{1,6})$, cf. \cite[Lemma 3.2]{MBBRR95}). A key structural property of unit disk graphs is that every second neighbourhood can be covered with a finite number of cliques; we will rely of that property in order to apply the LLL. This is a direct consequence of the following geometrical fact.

\begin{figure}[!ht]
    \centering
    \begin{tikzpicture}[line cap=round,line join=round,> = stealth, line width=1.5pt, x=2.5cm,y=2.5cm]

    \fill  (-5.196152422706632,-6) circle (2pt);
    \draw  (-5.196152422706632,-6) circle (2);
    
    \draw  (-5.413060595501682,-6.450500659904859)-- (-4.979165626017703,-6.4504627954256355);
    \draw  (-4.979165626017703,-6.4504627954256355)-- (-4.708674599964651,-6.111199695749307);
    \draw  (-4.708674599964651,-6.111199695749307)-- (-4.8052833227749225,-5.68819662170115);
    \draw  (-4.8052833227749225,-5.68819662170115)-- (-5.196152422706632,-5.5);
    \draw  (-5.196152422706632,-5.5)-- (-5.58707593664049,-5.688264845977223);
    \draw  (-5.58707593664049,-5.688264845977223)-- (-5.683610830015819,-6.111284774985581);
    \draw  (-5.683610830015819,-6.111284774985581)-- (-5.413060595501682,-6.450500659904859);
    \draw  (-4.905644600183051,-7.272249594848275)-- (-4.750928245646336,-7.9498142045233395);
    \draw  (-5.486282960025566,-7.272131798052397)-- (-5.641036286210788,-7.94989188110357);
    \draw  (-4.738592656396453,-6.94989029079898)-- (-4.905644600183051,-7.272249594848275);
    \draw  (-5.653443664069021,-6.949968855578759)-- (-5.486282960025566,-7.272131798052397);
    \draw  (-6.009521024899024,-7.020258083657558)-- (-5.653443664069021,-6.949968855578759);
    \draw  (-6.371782318903132,-6.566487575952188)-- (-6.224014337913103,-6.234845454646383);
    \draw  (-6.224014337913103,-6.234845454646383)-- (-6.501148110623531,-6.0002847058981015);
    \draw  (-6.3720293838179884,-5.4340254432691975)-- (-6.020607028332221,-5.342788384488842);
    \draw  (-6.020607028332221,-5.342788384488842)-- (-6.010001082710506,-4.979870505695979);
    \draw  (-5.4867712688456445,-4.7277757615860585)-- (-5.196336441378032,-4.945650688605946);
    \draw  (-5.196336441378032,-4.945650688605946)-- (-4.905977684299886,-4.727674393981768);
    \draw  (-4.38265990464755,-4.979586481076931)-- (-4.371927277436555,-5.342500635586233);
    \draw  (-4.371927277436555,-5.342500635586233)-- (-4.02047309561979,-5.43361501926719);
    \draw  (-3.8911567149135426,-5.999829176460321)-- (-4.1682085935943896,-6.234486648852448);
    \draw  (-4.1682085935943896,-6.234486648852448)-- (-4.020324856670155,-6.566077169198511);
    \draw  (-4.3823927888939505,-7.0202005121132744)-- (-4.738592656396453,-6.94989029079898);
    \draw  (-6.009521024899024,-7.020258083657558)-- (-6.442751864003199,-7.5639660587618565);
    \draw  (-6.371782318903132,-6.566487575952188)-- (-6.997889974951227,-6.868183041087336);
    \draw  (-6.501148110623531,-6.0002847058981015)-- (-7.196152375110162,-6.0004363323096985);
    \draw  (-6.3720293838179884,-5.4340254432691975)-- (-6.998268619741076,-5.132603197846493);
    \draw  (-6.010001082710506,-4.979870505695979)-- (-6.44343415424221,-4.436578021717227);
    \draw  (-5.4867712688456445,-4.7277757615860585)-- (-5.641546751459857,-4.050224656039967);
    \draw  (-4.905977684299886,-4.727674393981768)-- (-4.751438721063227,-4.050069302880067);
    \draw  (-4.38265990464755,-4.979586481076931)-- (-3.949416504371215,-4.436142733516786);
    \draw  (-4.02047309561979,-5.43361501926719)-- (-3.394339114059685,-5.131974193481128);
    \draw  (-3.8911567149135426,-5.999829176460321)-- (-3.1961524398413586,-5.999738200612927);
    \draw  (-4.020324856670155,-6.566077169198511)-- (-3.39411192718469,-6.867554063156318);
    \draw  (-4.3823927888939505,-7.0202005121132744)-- (-3.949007130225851,-7.563530818194201);
    \draw  (-5.653443664069021,-6.949968855578759)-- (-5.413060595501682,-6.450500659904859);
    \draw  (-4.979165626017703,-6.4504627954256355)-- (-4.738592656396453,-6.94989029079898);
    \draw  (-4.708674599964651,-6.111199695749307)-- (-4.1682085935943896,-6.234486648852448);
    \draw  (-4.8052833227749225,-5.68819662170115)-- (-4.371927277436555,-5.342500635586233);
    \draw  (-5.196152422706632,-5.5)-- (-5.196336441378032,-4.945650688605946);
    \draw  (-6.020607028332221,-5.342788384488842)-- (-5.58707593664049,-5.688264845977223);
    \draw  (-6.224014337913103,-6.234845454646383)-- (-5.683610830015819,-6.111284774985581);
    \draw  (-6.010001082710506,-4.979870505695979)-- (-5.4867712688456445,-4.7277757615860585);
    \draw  (-4.905977684299886,-4.727674393981768)-- (-4.38265990464755,-4.979586481076931);
    \draw  (-4.02047309561979,-5.43361501926719)-- (-3.8911567149135426,-5.999829176460321);
    \draw  (-4.020324856670155,-6.566077169198511)-- (-4.3823927888939505,-7.0202005121132744);
    \draw  (-4.905644600183051,-7.272249594848275)-- (-5.486282960025566,-7.272131798052397);
    \draw  (-6.009521024899024,-7.020258083657558)-- (-6.371782318903132,-6.566487575952188);
    \draw  (-6.501148110623531,-6.0002847058981015)-- (-6.3720293838179884,-5.4340254432691975);
    \draw [<->, line width=1pt,dashed] (-5.641036286210788,-7.94989188110357)-- (-4.905644600183062,-7.27224959484827);
    \draw [<->,line width=1pt,dashed] (-4.750928245646336,-7.9498142045233395)-- (-4.738592656396468,-6.949890290798982);
    \draw [<->,line width=1pt,dashed] (-5.413060595501683,-6.450500659904855)-- (-5.196152422706632,-6);
    
    \begin{scriptsize}
    \draw (-5.167759915974884,-7.609679163788004) node {1};
    \draw (-4.600383772962272,-7.362361357859456) node {1};
    \draw (-5.11684154416606,-6.1912388062566235) node {0.5};
    \end{scriptsize}
    \end{tikzpicture}
    \caption{A partition of the disk of radius $2$ into $22$ parts of diameter $\le 1$.}
    \label{fig:partition-diameter1}
\end{figure}

\begin{lemma}
    \label{lem:unit-disk}
    In the Euclidean plane, there exists a partition of the disk of radius $2$ into $22$ parts of diameter at most $1$. 
\end{lemma}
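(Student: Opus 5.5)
The plan is an explicit construction like the one in \Cref{fig:partition-diameter1}, using $1+7+14=22$ pieces, followed by a check of diameters. First I would confirm that naive annular sectors fail, which is why the pieces must be staggered polygons. For an outer ring of $14$ sectors of angle $2\pi/14$ between radii $r_1$ and $2$, the outer chord is $4\sin(\pi/14)\approx 0.89$ and the radial side is $2-r_1$. The diagonal $\sqrt{r_1^2+4-4r_1\cos(\pi/7)}$ is at most $1$ only when $r_1\gtrsim 1.305$. A middle ring of $7$ sectors between radii $1/2$ and $1.305$ then has outer chord about $1.13>1$, so aligned sectors do not work.

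The construction I would follow is the one in the figure. The central piece is a regular heptagon of circumradius $1/2$, which has diameter less than $1$. Around it I place $7$ middle polygonal pieces, each attached to one side of the heptagon. Their outer boundary is a zigzag polygonal curve that bulges outward near the middle of each piece and dips inward near the boundaries between consecutive pieces. The outer ring is cut into $14$ pieces by segments of length $1$ that hit the circle of radius $2$ at $14$ equally spaced points, with consecutive chords about $0.89\le 1$ apart. Each outer piece consists of a short arc of the circle, two such segments, and a bit of the zigzag curve. Because of the zigzag, the outer pieces reach further inward exactly where the middle pieces are narrow, which avoids the failure computed above. I would fix all vertices by explicit coordinates, using the $7$-fold rotational symmetry, so that one middle piece and two outer pieces (one of each type up to reflection) represent everything. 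The numbers should be chosen so that the key distances in the figure (segments of length exactly $1$ from the circle inward, and the $0.5$ central radius) are attained with equality.

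The verification has three parts. First, the pieces tile the disk: the heptagon, middle pieces and outer pieces share boundaries and cover radii $[0,2]$ in every direction, which follows from the symmetry and the ordering of vertices along each ray. Second, each purely polygonal piece has diameter at most $1$; for a convex polygon this is the largest distance between two vertices, and for a non-convex one it is still the largest distance between two points of the convex hull, hence between two vertices. So this reduces to finitely many distance computations per representative piece. Third, each outer piece contains a circular arc. For a fixed point $p$, the distance from $p$ to a point moving along an arc of the circle of radius $2$ is unimodal: it is maximised at an endpoint of the arc unless the antipodal direction of $p$ lies on the arc, which cannot happen for arcs of angle $2\pi/14$ and points inside the piece. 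Hence the diameter of an outer piece is again attained between vertices, including the arc endpoints.

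The main obstacle is choosing the coordinates of the zigzag boundary between the middle and outer rings. They must be tight enough that every middle piece has diameter at most $1$ and loose enough that every outer piece (a region about $0.7$ deep radially) also does. I would first try placing the inner endpoints of the unit segments so that consecutive ones are at distance at most $1$ and lie on two radii alternately, then compute the remaining vertex distances numerically and adjust if any exceeds $1$.
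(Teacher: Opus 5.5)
Your plan is the paper's own construction: the paper's proof is just the figure, plus the remarks that consecutive boundary points are less than $1$ apart and that the heptagon has circumradius $1/2$. Your reduction of the diameter checks to finitely many vertex distances (with the arc handled by unimodality) is sound.

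One correction, which also settles your ``main obstacle'': the $14$ cuts in the figure are not unit segments. They are radial segments running from the boundary points, which lie on the rays at angle $\pm\pi/14$ from the heptagon-vertex rays, down to radius $r_B\approx 1.305$, the smaller root of $r^2-4r\cos(\pi/7)+3=0$. The inward vertices of the zigzag lie on the heptagon-vertex rays at radius $r_D\approx 1.054$, the smaller root of $r^2-4r\cos(\pi/14)+3=0$. The dashed segments of length $1$ are the critical diagonals, from a boundary point to the inner end of the neighbouring cut and from a boundary point to the adjacent inward vertex. With these values all $22$ pieces are convex, with diameters about $0.975$ (heptagon), about $0.966$ (middle pieces), and exactly $1$ (outer pieces).
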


\begin{proof}
    The partition is given in \Cref{fig:partition-diameter1}. There are $14$ points uniformly distributed along the bounding circle; the distance between two consecutive ones is therefore less than $2\pi/7 < 1$.
    The central heptagon is inscribed in a circle of radius $r=1/2$.
\end{proof}

\begin{thm}
    \label{thm:unit-disk}
    For every unit disk graph $G$ of minimum degree $\delta \ge e^{44}$, 
    \[\dom(G) \ge \floor{\frac{\delta}{2\log \delta}}.\]
    
\end{thm}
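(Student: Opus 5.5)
Set $k\coloneqq\floor{\delta/(2\log\delta)}$ and colour each vertex of $G$ independently and uniformly at random with one of $k$ colours. For each vertex $u$ and colour $c$, let $A_{u,c}$ be the event that colour $c$ does not appear in $N[u]$. Then
\[
\pr{A_{u,c}}=(1-1/k)^{\deg(u)+1}\le e^{-(\deg(u)+1)/k}\le e^{-\delta/k}\le \delta^{-2}.
\]
We want to apply \Cref{cor:LLL}. The event $A_{u,c}$ is determined by the colours on $N[u]$, so it is mutually independent of all events $A_{w,c'}$ with $N[w]\cap N[u]=\emptyset$, that is, with $w\notin N^2[u]$. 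The main obstacle is that $|N^2[u]|$ may be far larger than $\delta$, because degrees are unbounded relative to $\delta$. A naive count of dependencies therefore fails, and we need to weight each neighbouring event by its own, much smaller, probability.

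We handle this with \Cref{lem:unit-disk}. Take the disk of radius $2$ centred at $u$ (with unit-distance adjacency; rescale if the paper's convention is radius $1/2$ disks). The vertices of $N^2[u]$ lie in this disk, so they split into $22$ parts $P_1,\dots,P_{22}$, each of diameter at most $1$, and each part induces a clique in $G$. Any $w\in P_i$ is adjacent to all of $P_i\setminus\{w\}$, so $\deg(w)+1\ge |P_i|$ and hence $\deg(w)+1\ge\max(\delta+1,|P_i|)$. Since $\pr{A_{w,c'}}\le e^{-(\deg(w)+1)/k}$, summing over the neighbours of $A_{u,c}$ in the dependency graph gives
\[
\sum_{w\in N^2[u]}\sum_{c'\in[k]}\pr{A_{w,c'}}\le k\sum_{i=1}^{22}|P_i|\,e^{-\max(\delta+1,|P_i|)/k}.
\]

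It remains to bound each term. The function $x\mapsto x e^{-x/k}$ is decreasing for $x\ge k$, and $\delta+1> k$. Hence for every $i$ we have $|P_i|e^{-\max(\delta+1,|P_i|)/k}\le(\delta+1)e^{-(\delta+1)/k}$: if $|P_i|\le \delta+1$ this follows by comparing directly, and otherwise it follows by monotonicity. The whole sum is therefore at most
\[
22k(\delta+1)e^{-\delta/k}\le 22\cdot\frac{\delta}{2\log\delta}(\delta+1)\,\delta^{-2}\approx\frac{11}{\log\delta}.
\]
This is at most $1/4$ once $\log\delta\ge 44$, which is exactly the hypothesis $\delta\ge e^{44}$. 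We also need to check the floor in the definition of $k$: we have $e^{-\delta/k}\le\delta^{-2}$ because $k\le\delta/(2\log\delta)$, so the floor only helps. The factor $(\delta+1)/\delta$ is absorbed by the slack. Since also $\pr{A_{u,c}}<1$, \Cref{cor:LLL} shows that with positive probability no event $A_{u,c}$ occurs. The resulting colouring is dominating with $k$ colours, so $\dom(G)\ge\floor{\delta/(2\log\delta)}$.
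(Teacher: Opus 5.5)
Your proof is correct and follows the paper's argument: a uniform random $k$-colouring, bad events determined by $N[u]$ so that dependencies lie in $N^2[u]$, the $22$-clique partition from \Cref{lem:unit-disk} giving $\deg(w)+1\ge|P_i|$, and monotonicity of $x\mapsto xe^{-x/k}$ to bound each clique's contribution before invoking \Cref{cor:LLL}. The one loose step is your final estimate. Weakening to $(\delta+1)e^{-\delta/k}$ and appealing to unspecified slack does not work as written, since the bound $\frac{11(\delta+1)}{\delta\log\delta}$ is not by itself at most $\frac14$ when $\delta$ is close to $e^{44}$. Instead, apply the same monotonicity once more, $(\delta+1)e^{-(\delta+1)/k}\le \delta e^{-\delta/k}\le \frac{1}{\delta}$, which gives exactly $\frac{22k}{\delta}\le\frac{11}{\log\delta}\le\frac14$.
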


\begin{proof}
    Let $k\coloneqq \floor{\frac{\delta}{2\log{\delta}}}$.
    For each vertex $v\in V(G)$, we draw a colour $\bsig(v) \in [k]$ independently and uniformly at random.
    For each colour $i\in [k]$ and vertex $u\in V(G)$, we let $B_{i,u}$ be the random bad event that there is no vertex of colour $i$ in $N[u]$.
    We show with an application of \Cref{cor:LLL} that, with non-zero probability, no bad event occurs, in which case the realisation of $\bsig$ is a dominating $k$-colouring of $G$.
    Let $u\in V(G)$ be a fixed vertex, and $i\in [k]$ a fixed colour.
    We have 
    \begin{equation}
        \pr{B_{i,u}}= \pth{1-\frac{1}{k}}^{\deg(u)+1} \le e^{-\frac{\deg(u)+1}{k}} < 1.
    \end{equation}
    Observe that the bad event $B_{i,u}$ is entirely determined by the random choices of colours of the vertices in $N[u]$. Hence, for any vertex $v\in V(G)$ at distance at least $3$ from $u$ and every colour $j \in [k]$, since $N[u]$ and $N[v]$ are disjoint, the bad events $B_{i,u}$ and $B_{j,v}$ are independent. 
    Therefore, in the dependency graph $\Gamma$ of the bad events $(B_{i,u})_{i\in [k], u\in V(G)}$, we have $N_\Gamma(B_{i,u}) \subseteq \sst{B_{j,v}}{j\in [k], v\in N_G^2[u]}$.
    We observe that, in the embedding of $G$ in the plane, $N^2[u]$ is contained in the disk of radius $2$ centred in $u$. By \Cref{lem:unit-disk}, this disk can be partitioned into $22$ parts of diameter at most $1$, so $N^2[v]$ can be partitioned into $22$ cliques $C_1, \ldots, C_{22}$ --- since the set of vertices included in a region of diameter at most $1$ induces a clique in a unit-disk graph.
    This lets us decompose the sum in the left-hand side of condition \eqref{eq:LLL} from \Cref{cor:LLL} as follows:
    \[
    \sum_{j \in [k], w \in N^2[v]} \pr{B_{j,w}}\le k\sum_{i=1}^{22}\sum_{w \in C_i} e^{-\frac{\deg(w)+1}{k}}
    \]
    We now bound the contribution of each clique $C_i$ in that sum separately. 
    Let $t_i \coloneqq |C_i|$; either $t_i\le \delta$, and 
        \[
        \sum_{w \in C_i} e^{-\frac{\deg(w)+1}{k}}\le \delta e^{-\frac{\delta+1}{k}} \le \frac{1}{\delta};
        \]
    otherwise $t_i \ge \delta$, and
        \[
        \sum_{w \in C_i} e^{-\frac{\deg(w)+1}{k}}\le t_i e^{-\frac{t_i}{k}} \le \delta e^{-\frac{\delta}{k}} \le \frac{1}{\delta},
        \]
        where we have used that $\deg(w) \ge t_i-1$ since $C_i$ is a clique, and that the function $x \mapsto x e^{-\frac{x}{k}}$ is non-increasing when $x\ge k$.
    
    We conclude that
    \[
    \sum_{j \in [k], w \in N^2[v]} \pr{B_{j,w}}\le \frac{22k}{\delta} \le \frac{11}{\log \delta}\le \frac{1}{4},
    \]
    which lets us apply \Cref{cor:LLL} and thus end the proof.
\end{proof}

\subsection{A more abstract setting}

Let $G$ be a graph.
Given a real value $p\in [0,1]$, an integer $K\ge 1$, and a vertex $u\in V(G)$, the \emph{$(p,K)$-constrained neighbourhood $\cstN(u)$ of $u$} is the subset of neighbours $v$ of $u$ such that $N(v)\setminus N(u)$ can be decomposed into $K$ parts $X_1, \ldots, X_K$, each of which satisfies $\min_{w\in X_i} (\deg_G(w)+1) \ge p|X_i|$. We call such a decomposition a \emph{$p$-dense $K$-decomposition} of $N(v)\setminus N(u)$.
Observe that, given a set $X\subseteq V(G)$ and a real value $p\in [0,1]$, one can compute an optimal $p$-dense decomposition of $X$ (one that uses the minimum number $K$ of parts) greedily with the following procedure. A direct consequence is that computing $\cstN(u)$ can be done in $O(\Delta(G)^2)$ time. 

\begin{algorithm}
\caption{A greedy construction of a $p$-dense decomposition}
\begin{algorithmic}[1]
\State let $v_1, \ldots, v_n$ be $X$ ordered by non-increasing degree in $G$
\State $i \gets 1$; $j \gets 1$
\State $X_i \gets \emptyset$
\For{$j \in [n]$}
    \If{$\deg_G(v_j) + 1 \ge p(|X_i| + 1)$}
        \State $X_i \gets X_i \cup \{v_j\}$
    \Else
    \State $i \gets i + 1$
    \State $X_i \gets \{v_j\}$
    \EndIf
\EndFor
\State \Return $(X_1, \ldots, X_i)$
\end{algorithmic}
\end{algorithm}

\begin{thm}
\label{thm:main-technical}
    Let $0 < c, p \le 1$ be real constants and $K\ge 1$ an integer.
    Let $G$ be a graph of minimum degree $\delta \ge 4^{e/9+K/p}$.
    If, for every vertex $u \in V(G)$,
    $|\cstN(u)|\ge c\cdot \deg(u)$, then
    \[ \tdom(G) \ge \floor{\frac{c \delta}{3 \log \delta}}.\]
\end{thm}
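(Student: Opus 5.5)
We use the uniformly random colouring, now with bad events adapted to total domination and restricted to constrained neighbourhoods. Set $k\coloneqq\floor{c\delta/(3\log\delta)}$ and colour each vertex independently and uniformly with $\bsig(v)\in[k]$. For $u\in V(G)$ and $i\in[k]$, let $A_{i,u}$ be the event that no vertex of $\cstN(u)$ receives colour $i$. If no $A_{i,u}$ occurs, every open neighbourhood $N(u)\supseteq\cstN(u)$ sees all $k$ colours, so the colour classes form a total domatic partition. The probability bound is immediate from the hypothesis:
\[\pr{A_{i,u}}=\pth{1-\tfrac1k}^{|\cstN(u)|}\le e^{-c\deg(u)/k}\le \delta^{-3\deg(u)/\delta}.\]
Two features matter here: $\pr{A_{i,u}}$ decays with $\deg(u)$ and not only with $\delta$, and $\log_4(1/\pr{A_{i,u}})\ge c\deg(u)/(k\log 4)$. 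This is why I apply \Cref{cor:LLL2} rather than \Cref{cor:LLL}: high-degree vertices have many dependent events, but their own event is correspondingly rarer.

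Next I describe the dependency graph. The event $A_{i,u}$ is determined by the colours on $\cstN(u)$. Hence $A_{j,v}$ can depend on $A_{i,u}$ only if $\cstN(u)\cap\cstN(v)\neq\emptyset$. That forces $v$ to be adjacent to some $w\in\cstN(u)$, so either $v\in N(u)$ or $v\in N(w)\setminus N(u)$ for some $w\in\cstN(u)$. Each such $v$ contributes at most $k$ events, one per colour. I then bound $\sum\pr{A_{j,v}}^\alpha$ in two groups, for a fixed $\alpha<1$ to be tuned.
\begin{itemize}
\item For $v\in N(u)$ we use $\deg(v)\ge\delta$. This group contributes at most $\deg(u)\cdot k\,e^{-\alpha c\delta/k}=\deg(u)\,k\,\delta^{-3\alpha}$.
\item For $v\in N(w)\setminus N(u)$ we use the $p$-dense $K$-decomposition $X_1,\dots,X_K$ of $N(w)\setminus N(u)$. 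Every $v\in X_\ell$ has $\deg(v)+1\ge\max\{\delta+1,p|X_\ell|\}$, so part $X_\ell$ contributes at most $k\,|X_\ell|\,e^{-\alpha c\max\{\delta,p|X_\ell|-1\}/k}$. Maximising $x\mapsto x e^{-\alpha c\max\{\delta,px\}/k}$ splits into two regimes. If $px\le\delta$, the factor $e^{-\alpha c\delta/k}$ kills the factor $x\le\delta/p$. If $px\ge\delta$, we are past the maximiser $k/(\alpha c p)$ of $xe^{-\alpha cpx/k}$, so the value is again at most about $(\delta/p)\delta^{-3\alpha}$. Summing over the $K$ parts and over $w\in\cstN(u)$, whose number is at most $\deg(u)$, gives a bound of the form $\deg(u)\,Kk\,\delta^{1-3\alpha}/p$.
\end{itemize}

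Finally I compare with the right-hand side of \eqref{eq:LLL2}, which is at least $(1-\alpha)c\deg(u)/(k\log 4)$. The factor $\deg(u)$ cancels on both sides, and what remains is a numerical inequality in $\delta$, $k$, $\alpha$, $K$, $p$. I would choose $\alpha$ and verify that the hypothesis $\delta\ge 4^{e/9+K/p}$ is exactly what makes it hold. The terms $e/9$ and $K/p$ in that exponent suggest that the optimisation $\max_x x e^{-ax}=1/(ea)$ enters, with $\alpha$ depending on $K/p$. I would also check the side condition $\pr{A_{i,u}}^\alpha\le 1/2$, which follows easily from $\delta$ being large.

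\textbf{Main obstacle.} I expect the second-neighbourhood count to be the hard step. The crude bound above, which multiplies the per-$w$ contribution by the $\deg(u)$ choices of $w$, appears to lose a factor $k^2/\delta$ that the exponent $3\alpha<3$ alone cannot absorb. So I will need to sharpen the count, for instance in one of two ways. The first is to use the decay $e^{-\alpha c\deg(v)/k}$ with the actual degrees. The second is to sum over distinct $v$ rather than over pairs $(w,v)$, and to exploit that the density condition $\deg(v)+1\ge p|X_\ell|$ controls each large part by its own degrees. Getting this accounting right, and picking $\alpha$ so that it closes under $\delta\ge 4^{e/9+K/p}$, is the crux. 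The remaining steps are routine.
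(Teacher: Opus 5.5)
\paragraph{Verdict.} Your approach is the paper's, but the proposal stops short of a proof. The paper uses the same $k$, the same bad events on $\cstN(u)$, the same appeal to \Cref{cor:LLL2}, the same dependency neighbourhood, and the same two-regime bound on each part of the $p$-dense decomposition. It also sums over pairs $(w,v)$ with $w\in\cstN(u)$ and $v$ in a part of $N(w)\setminus N(u)$, which is exactly your ``crude'' count. The gap is that you never carry out the final step, the only one that uses $\delta\ge 4^{e/9+K/p}$, and you predict it fails. It does not: no factor $k^2/\delta$ is lost, and neither of your proposed sharpenings is needed.

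\paragraph{The missing step.} Take $\alpha\coloneqq 1-\frac{1}{3\log\delta}$, so that $\delta^{-3\alpha}=e\,\delta^{-3}$; this also gives the side condition $\pr{A_{i,u}}^\alpha\le e\delta^{-3}<\frac12$. Your two groups then give
\[
\sum_{A_{j,v}\in N_\Gamma(A_{i,u})}\pr{A_{j,v}}^\alpha\le k\deg(u)\pth{\frac{e}{\delta^3}+\frac{3K}{p\delta^2}},
\]
using $e(\delta+1)/\delta<3$. The right-hand side of \eqref{eq:LLL2} is at least $\frac{(1-\alpha)c\deg(u)}{k\log 4}=\frac{c\deg(u)}{3k\log 4\,\log\delta}$. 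After cancelling $\deg(u)$, it suffices that
\[
\frac{k^2\log 4\,\log\delta}{\delta^2}\pth{\frac{3e}{\delta}+\frac{9K}{p}}\le c .
\]
Since $k\le\frac{c\delta}{3\log\delta}$, the left-hand side is at most
\[
c^2\,\frac{\log 4}{\log\delta}\pth{\frac{e}{3\delta}+\frac{K}{p}}\le c\,\frac{\log 4}{\log\delta}\pth{\frac e9+\frac Kp},
\]
and this is at most $c$ exactly when $\delta\ge 4^{e/9+K/p}$.

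\paragraph{Why the feared loss does not occur.} The ratio that matters is $k^2\delta^{1-3\alpha}/(1-\alpha)=3e\,k^2\log\delta/\delta^2=O(c^2/\log\delta)$. Pushing $1-\alpha$ down to order $1/\log\delta$ costs only the constant factor $e$ in $\delta^{-3\alpha}$. The factor $1/\log\delta$ lost on the right-hand side is more than repaid by $k^2/\delta^2\le c^2/(9\log^2\delta)$. This also corrects your guesses about the constants. The $e/9$ comes from $\delta^{3(1-\alpha)}=e$ together with the $3$ in the definition of $k$, not from $\max_x xe^{-ax}$. And $\alpha$ depends only on $\delta$, not on $K/p$.

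\paragraph{Small cases.} Treat $k\le 1$ separately. For $k=1$ the trivial colouring is total-dominating, since $\cstN(u)\neq\emptyset$. Both $\log\frac{1}{\pr{A_{i,u}}}\ge|\cstN(u)|/k$ and $e(\delta+1)/\delta<3$ are used only when $k\ge 2$, which forces $\delta\ge 10$.
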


\begin{proof}
    Let $k\coloneqq \floor{\frac{c\delta}{3\log{\delta}}}$. By the assumptions, we have $\delta \ge 4$, and we may assume that $c\delta \ge 3$, otherwise $k=0$ and there is nothing to prove. If $k=1$, then colouring every vertex with the unique colour gives a total-dominating $1$-colouring, since $|\cstN(u)|\ge 1$ for every $u\in V(G)$. Hence, we may assume that $k\ge 2$.
    For each vertex $u\in V(G)$, let us draw a colour $\bsig(u) \in [k]$ independently and uniformly at random.
    For each colour $i\in [k]$ and vertex $u\in V(G)$, we let $B_{i,u}$ be the random bad event that there is no vertex of colour $i$ in $\cstN(u)$, and we let $\Gamma$ be the dependency graph of those bad events.
    Note that $|\cstN(u)|\ge c\cdot \deg(u) \ge c\delta \ge 3$ by assumption. 
    We show with an application of \Cref{cor:LLL2} that, with non-zero probability, no bad event occurs, in which case the realisation of $\bsig$ is a total-dominating $k$-colouring of $G$.
    
    Let $u\in V(G)$ be a fixed vertex, and $i\in [k]$ a fixed colour.
    We have 
    \begin{equation}\label{eq:probability-bound}
        \pr{B_{i,u}}= \pth{1-\frac{1}{k}}^{\card{\cstN(u)}} \le e^{-\frac{c\cdot \deg(u)}{k}} \le \frac{1}{\delta^3}.
    \end{equation}
    Observe that the bad event $B_{i,u}$ is entirely determined by the random choices of colours of the vertices in $\cstN(u)$. For every $u' \notin \bigcup_{v\in \cstN(u)} N_G(v)$, the sets $\cstN(u)$ and $\cstN(u')$ are disjoint, so for every colour $j \in [k]$, the bad events $B_{i,u}$ and $B_{j,u'}$ are independent. 
    Therefore, in the dependency graph $\Gamma$, we have 
    \[
        N_\Gamma(B_{i,u}) \subseteq \bigcup_{j\in [k], v\in \cstN(u)} \sst{B_{j,w}}{w\in N_G(v)}.
    \]
    Although the right-hand side may contain $B_{i,u}$ itself, this only enlarges the set over which we sum.
    By definition, for each $v\in \cstN(u)$, $N_G(v)\setminus N_G(u)$ can be decomposed into $K$ parts $X_1(v),\ldots,X_K(v)$ such that $\deg_G(w)+1\ge p|X_\iota(v)|$ for every $\iota\in[K]$ and $w\in X_\iota(v)$.

    Let $\alpha \coloneqq 1-\frac{1}{3\log\delta}<1$. By \eqref{eq:probability-bound}, we have
    \[
    \pr{B_{i,u}}^\alpha \le \frac{1}{\delta^{3\alpha}} = \frac{e}{\delta^3}<\frac12
    \]
    and
    \begin{align*}
        \sum_{B_{j,w} \in N_\Gamma(B_{i,u})} \pr{B_{j,w}}^\alpha &\le \sum_{j\in [k]} \pth{\sum_{w\in N(u)} \pr{B_{j,w}}^\alpha + \sum_{v\in \cstN(u)} \sum_{\iota \in [K]} \sum_{w\in X_\iota(v)} \pr{B_{j,w}}^\alpha} \\
        &\le k  \pth{\frac{\deg(u)}{\delta^{3\alpha}} + \sum_{v\in \cstN(u)} \sum_{\iota \in [K]} \sum_{w\in X_\iota(v)} e^{-\frac{\alpha c\cdot \deg(w)}{k}}}.
    \end{align*}
    Let $j\in [k]$, $v\in \cstN(u)$, and $\iota\in [K]$ be fixed. Let us bound the contribution of $X_\iota(v)$ in the above sum. We write $t\coloneqq |X_\iota(v)|$; either $pt\le \delta+1$, and 
        \[
        \sum_{w \in X_\iota(v)} e^{-\frac{\alpha c\cdot \deg(w)}{k}}\le \frac{\delta+1}{p} e^{-\frac{\alpha c\delta}{k}} \le \frac{\delta+1}{p\delta^{3\alpha}} < \frac{3}{p\delta^2},
        \] 
     
    where we have used that $e\,\frac{\delta+1}{\delta} < 3$ when $\delta \ge 10$;
    otherwise $pt \ge \delta+1$. Since $\deg(w)+1\ge pt$ for every $w\in X_\iota(v)$, we have
    \[
        \deg(w)\ge pt-1 \ge \frac{pt\delta}{\delta+1}.
    \]
    Hence, setting $\beta\coloneqq \frac{\alpha cp\delta}{k(\delta+1)}$, we get
    \[
        \sum_{w \in X_\iota(v)} e^{-\frac{\alpha c\cdot \deg(w)}{k}}
        \le t e^{-\beta t}.
    \]
    The function $x\mapsto x e^{-\beta x}$ is non-increasing on $[1/\beta,\infty)$. Moreover,
    \[
        t\ge \frac{\delta+1}{p}\ge \frac{k(\delta+1)}{\alpha cp\delta}=\frac{1}{\beta},
    \]
    where we used $k\le \alpha c\delta$. Therefore,
    \[
        t e^{-\beta t}
        \le \frac{\delta+1}{p}e^{-\frac{\alpha c\delta}{k}}
        \le \frac{\delta+1}{p\delta^{3\alpha}}
        < \frac{3}{p\delta^2}.
    \]
    
    We conclude that
    \[
    \sum_{B_{j,w} \in N_\Gamma(B_{i,u})} \pr{B_{j,w}}^\alpha 
    \le k \pth{\frac{e\deg(u)}{\delta^3} + \frac{3\card{\cstN(u)}K}{p\delta^2}}.
    \]
    Moreover, since $k\ge 2$, using the standard inequality $-\log(1-x)\ge x$ for $x\in(0,1)$ with $x=1/k$, we obtain
    \[
        \log \frac{1}{\pr{B_{i,u}}}
        = \card{\cstN(u)}\log\frac{k}{k-1}
        \ge \frac{\card{\cstN(u)}}{k}.
    \]
    Therefore,
    \[
        (1-\alpha)\log_4 \frac{1}{\pr{B_{i,u}}}
        \ge \frac{1}{3\log 4 \cdot \log \delta} \cdot \frac{\card{\cstN(u)}}{k}.
    \]
    So \eqref{eq:LLL2} is satisfied as long as
    \[  \frac{k^2\log \delta\log 4}{\delta^2}\pth{\frac{3e}{c\delta}+\frac{9K}{p}} \le 1.\]
    Using the assumption $c\delta\ge 3$, this holds when $\delta \ge 4^{e/9+K/p}$. We may therefore apply \Cref{cor:LLL2} and thus end the proof.
\end{proof}

We now show that star-free graphs satisfy the setting of \Cref{thm:main-technical}. We first rely on the following structural result.

\begin{lemma}
    \label{lem:large-degree-Kr-free}
    Let $r\ge 2$ be an integer, let $G$ be an $n$-vertex $K_r$-free graph, and let $X$ be the set of vertices of degree more than $\frac{r-2}{r-1}\, n$ in $G$. Then $G[X]$ is $K_{r-1}$-free.
\end{lemma}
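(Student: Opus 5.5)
We argue by contradiction via a common-neighbourhood count. Suppose $G[X]$ contains a clique $Q=\{x_1,\dots,x_{r-1}\}$ of size $r-1$. We will show that the vertices of $Q$ must have a common neighbour $w$ outside $Q$. Then $Q\cup\{w\}$ is a clique of size $r$ in $G$, contradicting the assumption that $G$ is $K_r$-free.

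The key step is a union bound on non-neighbourhoods. For each $x_i\in Q$, let $M_i \coloneqq V(G)\setminus N(x_i)$, where $x_i$ itself counts as a non-neighbour. Since $\deg(x_i) > \frac{r-2}{r-1}n$, we have $|M_i| = n-\deg(x_i) < \frac{n}{r-1}$. Summing over the $r-1$ vertices of $Q$ gives
\[ \Big|\bigcup_{i=1}^{r-1} M_i\Big| < (r-1)\cdot\frac{n}{r-1} = n. \]
So some vertex $w\in V(G)$ lies in no $M_i$, meaning $w$ is adjacent to every $x_i$. In particular $w\notin Q$, since $x_i\in M_i$ and a vertex is not its own neighbour. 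Then $Q\cup\{w\}$ induces $K_r$, which is the contradiction.

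The only point needing care is the strictness of the inequality. The hypothesis "more than $\frac{r-2}{r-1}n$" gives a strict bound on each $|M_i|$, so the union has size strictly less than $n$ and some vertex must lie outside it. Counting $x_i$ in $M_i$ is what guarantees that $w\notin Q$. The degenerate case $r=2$ needs a separate remark: there $G$ is edgeless, so every degree is $0$, no vertex has degree more than $0$, and $X=\emptyset$. The statement is then vacuous, consistent with the convention that $K_1$-free means empty. No earlier result from the paper is needed.
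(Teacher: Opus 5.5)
Your proof is correct and is essentially the paper's argument. The paper notes that a copy $W$ of $K_{r-1}$ in a $K_r$-free graph has empty common neighbourhood, so $\sum_{w\in W}\deg(w)\le (r-2)n$; this is the complement form of your union bound on the non-neighbourhoods $M_i$. Your separate treatment of $r=2$ is harmless but unnecessary, since the main argument already covers that case.
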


\begin{proof}
    If $G$ is $K_{r-1}$-free, the result is immediate.
    Otherwise, let $W$ be any copy of $K_{r-1}$ in $G$. We have $\bigcap_{w \in W} N(w) = \emptyset$ since $G$ is $K_r$-free. Hence
    \[ \sum_{w\in W} \deg(w) \le (r-2)n.\]
    By the Pigeonhole Principle, we infer that $W \not\subseteq X$, and since $W$ was chosen arbitrarily, we conclude that $G[X]$ is $K_{r-1}$-free, as desired.
\end{proof}

As a consequence, we obtain the following.

\begin{cor}\label{cor:constrained-star-free}
	Given an integer $r\ge 2$, let $p \coloneqq \frac{1}{r}$ and $K \coloneqq r$. Then, for every induced-$K_{1,r+1}$-free graph $G$, every vertex $v\in V(G)$ verifies $\cstN(v) = N(v)$.
\end{cor}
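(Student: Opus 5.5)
The proof reduces to a degree-peeling argument on the complement of $G[N(u)\setminus N(v)]$, where \Cref{lem:large-degree-Kr-free} is applied once per level to produce the $r$ parts. Fix a vertex $v$ and a neighbour $u\in N(v)$. We must show that $Y\coloneqq N(u)\setminus N(v)$ admits a $\frac1r$-dense $r$-decomposition. Note that $v$ itself may lie in $Y$, since $N(v)$ is the open neighbourhood; this needs no special treatment, because the only property used is $Y\subseteq N(u)$.

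\textbf{Step 1: independence number.} Since $u$ is adjacent to every vertex of $Y$ and $G$ is induced-$K_{1,r+1}$-free, $G[Y]$ has no independent set of size $r+1$. Equivalently, the complement $\overline{G[Y]}$ is $K_{r+1}$-free.

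\textbf{Step 2: iterative peeling.} Set $Y_{r+1}\coloneqq Y$, and maintain the invariant that $\overline{G[Y_s]}$ is $K_s$-free. Given $Y_s$ with $n_s\coloneqq|Y_s|$, apply \Cref{lem:large-degree-Kr-free} (with $r$ replaced by $s$) to the $K_s$-free graph $\overline{G[Y_s]}$:
\begin{itemize}
\item Let $Y_{s-1}$ be the set of vertices of complement degree more than $\frac{s-2}{s-1}n_s$. The lemma says $\overline{G[Y_{s-1}]}$ is $K_{s-1}$-free, so the invariant is preserved.
\item Let $P_s\coloneqq Y_s\setminus Y_{s-1}$ be the next part.
\end{itemize}
Every $w\in P_s$ has complement degree at most $\frac{s-2}{s-1}n_s$ inside $Y_s$. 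Hence its degree in $G[Y_s]$ is at least $n_s-1-\frac{s-2}{s-1}n_s=\frac{n_s}{s-1}-1$. Since $s-1\le r$ and $|P_s|\le n_s$, this gives
\[
\deg_G(w)+1\ \ge\ \frac{n_s}{s-1}\ \ge\ \frac{|P_s|}{r}.
\]
So $P_s$ satisfies the $p$-density condition with $p=\frac1r$.

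\textbf{Step 3: counting parts.} The peeling runs for $s=r+1,r,\dots,2$, which is exactly $r$ parts. At the last step $s=2$, the set $Y_1$ consists of the vertices of positive degree in $\overline{G[Y_2]}$. That graph is edgeless, being $K_2$-free, so $Y_1=\emptyset$. Therefore $P_{r+1},\dots,P_2$ partition $Y$ (some parts may be empty, which is harmless). This is a $\frac1r$-dense $r$-decomposition of $N(u)\setminus N(v)$, so $u\in\cstN(v)$. As $u\in N(v)$ was arbitrary, $\cstN(v)=N(v)$.

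The main point to get right is the bookkeeping in Step 2. The density bound must be measured against $|P_s|\le n_s$ rather than a shrinking quantity, and the level $s$ must stay at most $r+1$ so that the factor $\frac{1}{s-1}$ is at least $\frac1r$. The rest is a direct application of \Cref{lem:large-degree-Kr-free}.
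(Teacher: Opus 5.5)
Your proposal is correct and follows essentially the same route as the paper. The paper proves the same decomposition claim by induction on $r$: it takes the set of low-degree vertices of $H=G[N(u)\setminus N(v)]$, applies \Cref{lem:large-degree-Kr-free} to the complement to lower the independence number by one, and recurses on that set. Your peeling $Y_{r+1}\supseteq Y_r\supseteq\cdots\supseteq Y_1=\emptyset$ is this induction unrolled, with the same thresholds and the same final weakening $\frac{1}{s-1}\ge\frac{1}{r}$.
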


\begin{proof}

    Let $v\in V(G)$, and let $u\in N(v)$. Since $G$ is induced-$K_{1,r+1}$-free, we have $\alpha(G[N(u)])\le r$. Hence, for the induced subgraph 
    \[
        H \coloneqq G[N(u)\setminus N(v)],
    \]
    we also have $\alpha(H)\le r$.
    
    We shall use the following claim.
    \begin{claim}
        Let $H$ be an $n$-vertex graph with bounded independence number $\alpha(H)\le r$. 
        Then $V(H)$ can be decomposed into $X_1, \ldots, X_r$ such that, for every $i\in [r]$ and $v\in X_i$, 
        \[ \deg_H(v) \ge \frac{|X_i|}{r}-1.\]
    \end{claim}

    \begin{proofclaim}
        Let $n\coloneqq |V(H)|$.
        If $\delta(H) \ge \frac{n}{r}-1$, the result trivially holds by taking $X_1 = V(H)$ and $X_2 = \ldots = X_r = \emptyset$. So we assume that there is a vertex $x_0$ with $\deg_H(x_0)<\frac{n}{r}-1$.
        Let $X \coloneqq \sst{x\in V(H)}{\deg_H(x) < \frac{n}{r}-1}$; in particular $x_0 \in X$ so $X\neq \emptyset$.
        Since $\alpha(H)\le r$, the complement $\overline{H}$ is $K_{r+1}$-free. Moreover, by the definition of $X$, a vertex $x\in X$ has degree more than $\frac{r-1}{r}n$ in $\overline{H}$. Therefore, by \Cref{lem:large-degree-Kr-free} applied to $\overline{H}$ with parameter $r+1$, the graph $\overline{H}[X]$ is $K_r$-free, i.e. $\alpha(H[X])\le r-1$.
        
        We now proceed by induction on $r$.
        In the base case $r=2$, we have $\alpha(H[X])\le 1$, so $X$ is a clique. Moreover, for every $v\in V(H)\setminus X$, by the definition of $X$, we have
        \[
            \deg_H(v)\ge \frac{n}{2}-1 \ge \frac{|V(H)\setminus X|}{2}-1.
        \]
        Thus $X,V(H)\setminus X$ is the desired decomposition.
        
        In the general case, by the induction hypothesis applied to $H[X]$, one can decompose $X$ into $X_1, \ldots, X_{r-1}$ such that, for every $i\in [r-1]$ and $v\in X_i$,
        \[
            \deg_{H[X]}(v) \ge \frac{|X_i|}{r-1}-1.
        \]
        In particular,
        \[
            \deg_H(v) \ge \deg_{H[X]}(v) \ge \frac{|X_i|}{r-1}-1 \ge \frac{|X_i|}{r}-1.
        \]
        Let $X_r \coloneqq V(H) \setminus X$. For every $v\in X_r$, by the definition of $X$, we have
        \[
            \deg_H(v)\ge \frac{n}{r}-1 \ge \frac{|X_r|}{r}-1.
        \]
        Thus $X_1,\ldots,X_r$ is the desired decomposition of $V(H)$.
    \end{proofclaim}
    
    Applying the claim to $H=G[N(u)\setminus N(v)]$, we obtain a decomposition
    \[
        N(u)\setminus N(v)=X_1\cup\cdots\cup X_r
    \]
    such that, for every $i\in[r]$ and every $w\in X_i$,
    \[
        \deg_H(w)\ge \frac{|X_i|}{r}-1.
    \]
    Since $\deg_G(w)\ge \deg_H(w)$, this gives
    \[
        \deg_G(w)+1\ge \frac{|X_i|}{r}.
    \]
    Thus $N(u)\setminus N(v)$ admits the required decomposition with $p=1/r$ and $K=r$. Therefore $u\in\cstN(v)$. Since $u$ was an arbitrary neighbour of $v$, we conclude that $\cstN(v)=N(v)$.
\end{proof}

As an immediate consequence of \Cref{cor:constrained-star-free} and \Cref{thm:main-technical}, we conclude that for every fixed $r\ge 2$, the class of induced-$K_{1,r+1}$-free graphs is DOM-bounded with a quasilinear DOM-binding function.

\begin{cor}
	Fix $r \ge 2$. Let $G$ be an induced-$K_{1,r+1}$-free graph of minimum degree $\delta \ge 4^{r^2+e/9}$. Then
	\[\tdom(G) \ge \floor{\frac{\delta}{3 \log \delta}}\]
\end{cor}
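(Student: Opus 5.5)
The corollary should follow by combining \Cref{cor:constrained-star-free} with \Cref{thm:main-technical}. The plan is to set $p \coloneqq 1/r$ and $K \coloneqq r$, exactly as in \Cref{cor:constrained-star-free}, and take $c \coloneqq 1$.

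First, let $G$ be an induced-$K_{1,r+1}$-free graph with minimum degree $\delta \ge 4^{r^2+e/9}$. By \Cref{cor:constrained-star-free}, every vertex $u\in V(G)$ satisfies $\cstN(u) = N(u)$. Hence $|\cstN(u)| = \deg(u) = c\cdot\deg(u)$ with $c=1$, so the structural hypothesis of \Cref{thm:main-technical} holds with constants $0<c,p\le 1$ and integer $K\ge 1$.

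Second, we check the degree threshold of \Cref{thm:main-technical}. It requires $\delta \ge 4^{e/9+K/p}$, and here
\[
    \frac{K}{p} = r\cdot r = r^2,
\]
so the threshold is $4^{e/9 + r^2}$. This is exactly the assumption of the corollary.

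Third, \Cref{thm:main-technical} then yields
\[
    \tdom(G) \ge \floor{\frac{c\delta}{3\log\delta}} = \floor{\frac{\delta}{3\log\delta}},
\]
which is the claim. If one also wants the domatic version of \Cref{thm:main}, it follows from $\dom(G)\ge\tdom(G)$ in \Cref{lem:dom-tdom}, after reindexing $k = r$ and noting that $4^{1+k^2}\ge 4^{k^2+e/9}$.

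There is essentially no obstacle here: all the work lies in \Cref{cor:constrained-star-free} and \Cref{thm:main-technical}. The only points to verify are that the constants $p=1/r$, $K=r$ and $c=1$ satisfy $0<c,p\le 1$, and that $K/p=r^2$, so that the thresholds match.
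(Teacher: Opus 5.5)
Your proposal is correct and is exactly the paper's argument: apply \Cref{cor:constrained-star-free} with $p=1/r$ and $K=r$ to get $\cstN(u)=N(u)$ for every vertex $u$, then invoke \Cref{thm:main-technical} with $c=1$. Since $K/p=r^2$, the degree threshold $4^{e/9+K/p}$ of that theorem is precisely the hypothesis $\delta \ge 4^{r^2+e/9}$.
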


Since $\tdom(G)\le \dom(G)$, this immediately implies \Cref{thm:main}.

\section{Open problems}
\label{sec:conclusion}

\subsection{Unit disk graphs}

In \Cref{sec:stars}, we first considered the class of unit disk graphs to illustrate a simple application of the Lovász Local Lemma to yield a quasilinear lower bound on the domatic number depending only on the minimum degree, before we moved on to the more abstract setting of $(p,K)$-constrained neighbourhoods we needed to prove DOM-boundedness for the more general classes of induced-star-free graphs. While we proved a quasilinear upper bound for the domatic number of induced-star-free graphs (cf. \Cref{prop:alpha2}), we have no reason to believe that our quasilinear lower bound for unit disk graphs (cf. \Cref{thm:unit-disk}) has the right order of magnitude.

\begin{problem}
	Is there a linear DOM-binding function for the class of unit disk graphs? 
\end{problem}

More generally, we believe DOM-boundedness may be interesting to study in other geometric intersection graphs, such as intersection graphs of balls or boxes in $\mathbb{R}^d$, $d$-directional segment graphs, string graphs, ...

\subsection{Beyond cographs}

Another characterisation of cographs corresponds to the class of graphs of cliquewidth at most $2$. One could consider dominating colourings of graphs of bounded cliquewidth to try and extend the present results. 

\begin{problem}
	Given a fixed integer $t\ge 3$, is the class of graphs of clique-width at most $t$ DOM-bounded? 
	The special case of distance-hereditary graphs (that have clique-width $\le 3$) can be considered first.
\end{problem}

\subsection{Forbidden induced subgraphs}

As a partial progress towards solving \Cref{conj:main}, one could try and solve its restriction to split graphs.
Let $\cS$ denote the class of split graphs, and $S_{s,t}$ the double star with $s+t$ leaves.
\begin{problem}
	Is $\forb(S_{r,r})\cap \cS$ DOM-bounded?
\end{problem}

\subsection{Line graphs}

We showed in \cref{thm:linegraph-lower} that if $G$ is the line graph of a simple graph, then $\dom(G) \ge \frac{\delta}{2} -1$. We then remarked that this lower bound was nearly tight, as there exist line graphs $G$ such that $\dom(G) \le \frac{\delta}{2} + 1$.

We note that the notion of \emph{edge-domatic partitions} (i.e. domatic partition in the line graph) was	considered by Zelinka \cite{zelinka1983edge}. In particular, it was claimed that every graph $G$ has edge-domatic number at least $\delta(G)$; since $L(G)$ has minimum degree at most $\delta(L(G)) \le 2\delta(G) - 2$, this would imply the tight lower bound $\dom(L(G)) \ge \delta(G) \ge \frac{\delta(L(G))}{2} + 1$. However, the proof provided in \cite[Theorem 1]{zelinka1983edge} is based on a flawed argument; roughly speaking, the author repeatedly extracts a maximal independent set of edges, ensuring that the minimum degree decreases by at most $1$ at each step, but fails to ensure that edges extracted at a given step are dominated by every subsequent colour class. We therefore pose Zelinka's claimed bound as a problem.

\begin{problem}
	Is it true that every graph $G$ verifies $\dom(L(G)) \ge \delta(G)$?
\end{problem}

\bibliography{dom-bounded}
\bibliographystyle{plain}
\end{document}